\newtheorem{defi}{Definition}[section]
\newtheorem{lem}{Lemma}[section]
\newtheorem{prop}{Proposition}[section]
\newtheorem{thm}{Theorem}[section]
\def\f{\frac}
\def\d{\mathrm d}
\def\e{\mathrm e}
\def\i{\mathrm i}
\def\R{\mathbb{R}}
\def\N{\mathbb{N}}
\def\C{\mathbb{C}}
\def\D{\mathbb{D}}
\def\A{\mathcal{A}}
\def\cL{\mathcal{L}}
\def\cO{\mathcal{O}}
\def\cQ{\mathcal{Q}}
\def\cW{\mathcal{W}}
\def\cD{\mathcal{D}}
\def\cB{\mathcal{B}}
\def\cC{\mathcal{C}}
\def\cS{\mathcal{S}}
\newcommand{\bel}{\begin{equation} \label}
\newcommand{\ee}{\end{equation}}
\def\beq{\begin{equation}}
\def\eeq{\end{equation}}
\newcommand{\bea}{\begin{eqnarray}}
\newcommand{\eea}{\end{eqnarray}}
\newcommand{\beas}{\begin{eqnarray*}}
\newcommand{\eeas}{\end{eqnarray*}}
\def\de{\delta}
\def\De{\Delta}
\def\al{\alpha}
\def\be{\beta}
\def\ga{\gamma}
\def\Ga{\Gamma}
\def\ep{\epsilon}
\def\ve{\varepsilon}
\def\ka{\kappa}
\def\te{\theta}
\def\la{\lambda}
\def\vp{\varphi}
\def\na{\nabla}
\def\om{\omega}
\def\Om{\Omega}
\def\Sg{\Sigma}
\def\ov{\overline}
\def\pa{\partial}
\def\Re{\mathrm{\,Re\,}}
\def\Im{\mathrm{\,Im\,}}
\def\wh{\widehat}
\def\wt{\widetilde}
\def\LL{{L^2(\Om)}}
\def\HH{{H^2(\Om)}}
\def\dt{\d t}
\def\ds{\d s}
\def\dr{\d r}
\def\da{\d \al}
\title{\bf
Initial-boundary value problem for distributed order time-fractional diffusion equations
}
\author{
Zhiyuan LI$^*$,
Yavar KIAN$^\dag$ and
Eric SOCCORSI$^\dag$
}
\date{}
\begin{document}
\maketitle
\renewcommand{\thefootnote}{\fnsymbol{footnote}}
\footnotetext{\hspace*{-5mm} 
\begin{tabular}{@{}r@{}p{13cm}@{}} 
& Manuscript last updated: \today.
\\
$^\dag$& Aix-Marseille Univ, Universit\'e de Toulon, CNRS, CPT, Marseille, France.
\\
$^*$& Graduate School of Mathematical Sciences, 
the University of Tokyo, 
3-8-1 Komaba, Meguro-ku, Tokyo 153-8914, Japan. 
E-mail:
zyli@ms.u-tokyo.ac.jp.
\end{tabular}}

\begin{abstract}
We examine initial-boundary value problems for diffusion equations with distributed order time-fractional derivatives. We prove existence and uniqueness results for the weak solution to these systems, together with its continuous dependency on initial value and source term. Moreover, under suitable assumption on the source term, we establish that the solution is analytic in time.
\end{abstract}

\section{Introduction}
\label{sec-intro-distri}

The time-fractional diffusion model of constant order (CO) $\al$, $\pa_t^\al u - \De u = f$, has received great attention within the last few decades from numerous applied scientists, see e.g. \cite{BWM00,CSL13,HH98,LB03}, due to its relevance for modeling anomalous diffusion processes whose mean square displacement (MSD) scales like $t^\al$ as the time variable $t$ goes to infinity. But, more recently, it was noticed in \cite{CRSG03,N04,SCK04} and the references therein that several application areas such as polymer physics or kinetics of particles moving in quenched random force fields, exhibit ultraslow diffusion phenomena whose MSD is of logarithmic growth. There are several approaches for modeling such processes. One of them uses time-fractional diffusion equations of distributed order (DO), see e.g. \cite{K08, MMPG08, RZ16}, but we also mention that a diffusion model with variable fractional order time-derivative was proposed in \cite{SCC09} to depict ultraslow diffusion processes. In this paper, we are concerned with the DO fractional diffusion model. More precisely, we consider the following initial-boundary value problem (IBVP),
\begin{equation}
\label{equ-u-distri}
\left\{
\begin{alignedat}{2}
&\D^{(\mu)}_t u +\A u = F
&\quad& \mbox{in}\ Q := \Om \times (0,T),\\
&u(0,\cdot)=u_0 &\quad& \mbox{in $\Om$},
\\
&u=0
&\quad& \mbox{on}\ \Sg :=\pa\Om\times(0,T),
\end{alignedat}
\right.
\end{equation}
where $\Om$ is an open bounded domain in $\R^d$, $d \ge 2$, with $\cC^{1,1}$ boundary $\pa\Om$, and $\A$ is the following symmetric and uniformly elliptic operator
$$
\A\vp(x)
:=-\sum_{i,j=1}^d \pa_{x_i} (a_{ij}(x)\pa_{x_j} \vp(x))+q(x)\vp(x),
$$
associated with a suitable real-valued electric potential $q$ and symmetric coefficients
$a_{ij}= a_{ji}$, $1\leq i,j \leq d$, fulfilling the uniform ellipticity condition
\begin{equation}
\label{a-ell}
\sum_{i,j=1}^d a_{ij}(x)\xi_i \xi_j \ge c_a \abs{\xi}^2 ,\
x \in\ov\Om,\ \xi=(\xi_1,\cdots,\xi_d ) \in \R^d,
\end{equation}
for some positive constant $c_a$.
Here, $\D^{(\mu)}_t$ denotes the distributed order fractional derivative
$$
\D^{(\mu)}_t \vp(t) := \int_0^1 \pa^\al_t \vp(t) \mu(\al) \da,
$$
induced by the non-negative
weight function $\mu \in L^\infty(0,1)$, and $\pa_t^\al$ is the Caputo derivative of order $\al$, defined by:
$$
\pa^\al_t \vp(t) := 
\left\{
\begin{alignedat}{2}
&\vp(t) &\quad& \mbox{if}\ \al = 0,
\\
&\f{1}{\Ga(1-\al)}\int_0^t \f{\vp'(\tau)}{(t-\tau)^\al}\d\tau
&\quad& \mbox{if}\ \al \in (0,1),
\\
&\vp'(t)&\quad& \mbox{if}\ \al=1,
\end{alignedat}
\right.
$$
the sysmbol $\Gamma$ denoting the usual Gamma function.

From a mathematical viewpoint, the forward problem associated with these equations was investigated in \cite{K08, LLY15, L-FCAA, MNV11}.
Namely, the fundamental solution to the 
Cauchy problem for both ordinary and partial distributed order fractional differential equations with continuous weight function was derived and investigated in detail in \cite{K08}. A uniqueness result for the solution to diffusion equations of DO was derived in \cite{L-FCAA} with the aid of an appropriate maximum principle and a formal solution was constructed by means of the Fourier method of variables separation. Unfortunately, there is no proof available in \cite{L-FCAA} of the convergence of the series describing this formal solution. Further, explicit strong solutions (and stochastic analogues) to DO time-fractional diffusion equations with Dirichlet boundary conditions were built in \cite{MNV11} for $C^1$-weight functions.
For the asymptotic behavior we refer to \cite{LLY15}, where logarithmic decay of the solution to DO diffusion equations was established for $t\to\infty$, while this solution scales at best like $(t | \log t | )^{-1}$ as $t \to 0$. These results are in sharp contrast with the ones derived for single- and multi-term time-fractional diffusion equations, see e.g.  \cite{KSVZ, LLY-AMC, SY11}, which seem unable to capture the time asymptotic trends of ultraslow diffusion processes.

Formally, single- or multi-term time-fractional diffusion equations can be seen as DO time-fractional diffusion equations associated with a density function of the form $\mu=\sum_{j=1}^\ell q_j\de(\cdot-\al_j)$, where $\de$ is the Dirac-delta function. The definition of a weak solution for single- or multi-term time-fractional diffusion equations was recently introduced in \cite{GLY15,JLLY} by taking advantage of the fact that the domain of the $L^2(0,T)$-realization of the operator $\pa_t^\al$, $\al \in (0,1)$, is embedded in the fractional Sobolev space of order $\al$, $H^\al(0,T)$. However, as it is still unclear whether the domain of $\D^{(\mu)}_t$ can be described by fractional Sobolev spaces, the scheme developed in \cite{GLY15,JLLY} does not seem to be relevant for DO time-fractional diffusion equations. Another approach, initiated by \cite{Z09} and recently applied to DO time-fractional diffusion equations in \cite{KR}, is to consider \eqref{equ-u-distri} as an abstract evolutionary integro-differential equation. This strategy is suitable for both autonomous and non-autonomous equations but there is a serious inconvenience to this method, arising from the dependency of the functional space of the solution on the kernel function of the corresponding integro-differential operator, which causes numerous technical difficulties in performing computations based on this model.

In this paper, since the system under study is autonomous (the coefficients appearing in \eqref{equ-u-distri} are all space-dependent only), we rather follow the idea of \cite{KY15} and characterize the weak solution to \eqref{equ-u-distri} as the original of the solution to the Laplace transform of \eqref{equ-u-distri} with respect to the time variable. With reference to the analysis carried out in \cite{LLY-AMC} for multi term CO time-fractional diffusion equations, we aim to study the existence, uniqueness and regularity properties, and the stability with respect to the diffusion coefficients and the weight function $\mu$, of a weak solution to \eqref{equ-u-distri}.

\subsection{Settings}

In this paper, we assume that $q \in L^\kappa(\Om)$ for some $\kappa \in (d,+\infty)$, is non-negative, i.e.
\begin{equation}
\label{p-nn}
q(x) \geq 0,\ x \in \Om,
\end{equation}
and that the coefficients
$a_{i,j}= a_{j,i}\in \cC^1(\ov\Om,\R)$, $1\leq i,j \leq d$, satisfy the ellipticity condition \eqref{a-ell}.

We denote by $A$ the operator generated in $\LL$ by the quadratic form
$$ 
u \mapsto \sum_{i,j=1}^d \int_{\Om} a_{i,j}(x) \pa_{x_i} u(x) \pa_{x_j} \overline{u(x)} \d x + \int_{\Om} q(x) \abs{u(x)}^2 \d x ,\ u \in H_0^1(\Om). 
$$
Due to \eqref{a-ell}, $A$ is selfadjoint in $\LL$ and acts as the operator $\A$ on its domain $D(A):=H_0^1(\Om) \cap \HH$,
in virtue of \cite[Section 2.1]{KSY}.

Moreover, since $H_0^1(\Om)$ is compactly embedded in $\LL$, the resolvent of $A$ is compact in $\LL$, hence the spectrum of $A$ is purely discrete. We denote by $\{\la_n,\ n \in \N \}$, where $\N := \{ 1, 2, \ldots \}$, the sequence of the eigenvalues of $A$ arranged in non-decreasing order and repeated according to their multiplicity.
In light of \eqref{a-ell}-\eqref{p-nn}, we have
\begin{equation}
\label{non-degeneracy}
\la_n \ge c_a>0,\ n \in \N.
\end{equation}
For further use, we introduce an orthonormal basis $\{ \vp_n,\ n \in \N \}$ of eigenfunctions of $A$ in $\LL$, such that
$$ 
A \vp_n=\la_n\vp_n,\ \vp_n \in D(A),\ n \in \N. 
$$

\subsection{Weak solution}

As already mentioned in the introduction, the usual definition given in \cite{GLY15} of a weak solution to CO time-fractional diffusion equations, is not suitable for DO time-fractional diffusion equations. Hence we rather follow the strategy implemented in \cite{KSY} (which is by means of the Laplace transform of tempered distributions), that is recalled below. 
 
Let $\cS'(\R,\LL) := \cB(\cS(\R,\LL),\R)$ be the space dual to $\cS(\R;\LL)$, put $\R_+:= [0,+\infty)$, and denote by
$\cS'(\R_+,\LL):=\{v \in \cS'(\R,\LL);\ \mbox{supp}\ v \subset \R_+ \times \ov{\Om}\}$ the set of distributions in $\cS'(\R,\LL)$ that are supported in $\R_+ \times \ov{\Om}$. Otherwise stated, $v \in \cS'(\R,\LL)$ lies in $\cS'(\R_+,\LL)$ if and only if
$\langle v , \vp \rangle_{\cS'(\R,\LL),\cS(\R,\LL)} =0$
whenever $\vp \in \cS(\R,\LL)$ 
vanishes in $\R_+ \times \ov{\Om}$. As a consequence we have
$$
\langle v(\cdot,x) , \vp \rangle_{\cS'(\R),\cS(\R)}
=\langle v(\cdot,x)  , \psi \rangle_{\cS'(\R),\cS(\R)},\ \vp, \psi \in \cS(\R),
$$
provided $\vp=\psi$ in $\R_+ $. 
Further, we say that $\vp \in \cS(\R_+)$ if $\vp$ is the restriction to $\R_+$ of a function $\wt{\vp}\in \cS(\R)$. Then, for a.e. $x\in\Om$, we set
$$
\langle v(\cdot,x) , \vp \rangle_{\cS'(\R_+),\cS(\R_+)} := x \mapsto \langle v(\cdot,x)  , \wt{\vp} \rangle_{\cS'(\R),\cS(\R)},\ v \in \cS'(\R_+,\LL).
$$
Notice that $\wt{\vp}$ may be any function in $\cS(\R)$ such that $\wt{\vp}(t)=\vp(t)$ for all $t \in \R_+$.

For all $p \in \C_+ := \{ z\in\C;\ \Re z \in (0,+\infty) \}$, where $\Re z $ denotes the real part of $z$, we put
$$ 
e_p(t) := \exp (-p t ),\ t \in \R_+.
$$
Evidently, $e_p$ lies in $\cS(\R_+)$ so we can define the Laplace transform $\cL[v]$ of $v \in \cS'(\R_+,\LL)$, with respect to $t$, as the family of mappings
$$
\cL [v](p) : x \in \Om \mapsto \langle v(\cdot,x) , e_p \rangle_{\cS'(\R_+),\cS(\R_+)},\ p \in \C_+.
$$ 
Notice that $p \mapsto \cL [v](p) \in \cC^\infty(\C_+,\LL)$. 

Inspired by \cite[Definition 2.2]{KSY} we may now introduce the weak solution to \eqref{equ-u-distri} as follows.

\begin{defi}
\label{def-u-distri}
Let $\mu \in L^\infty(0,1;\R_+)$, let $a\in\LL$ and, depending on whether $T \in (0,+\infty)$ or $T=+\infty$, let $F \in L^\infty(0,T;\LL)$ or let $t \mapsto (1+t)^{-m}F(t,\cdot) \in L^\infty(\R_+,\LL)$ for some $m\in \N_0 := \N \cup \{ 0 \}$.  We say that $u$ is a {\it weak solution} to the IBVP \eqref{equ-u-distri} if $u$ is the restriction to $Q$ of a distribution $v\in \cS'(\R_+,\LL)$, i.e. $u=v|_{Q}$, whose Laplace transform $\wh v:=\cL[v]$ fulfills
\begin{equation}
\label{eq1}
(A+sw(s)) \wh v(s) = w(s) u_0 + \wh F(s),\ s \in \C_+,
\end{equation}
with $w(s):=\int_0^1 s^{\al-1} \mu(\al)\da$ and
$\wh F(s):=\cL[F\mathbbm{1}_{(0,T)}](s)=\int_0^T\e^{-st}F(t,\cdot)\dt$.
Here $\mathbbm{1}_{(0,T)}$ stands for the characteristic function of $(0,T)$.
\end{defi}
We stress out that Equation \eqref{eq1} imposes that $\wh v(s) \in D(A)$ for all $s \in \C_+$.

In the coming section we state several existence and uniqueness results for the weak solution to \eqref{equ-u-distri}.

\subsection{Main results}

We first address the case where the initial state $u_0 \in \LL$. The corresponding result is as follows. 
 
\begin{thm}
\label{thm-forward} 
Let $\mu \in L^\infty(0,1)$ be non-negative and fulfill the following condition: 
\begin{equation}
\label{cnd-mu1}
\exists \al_0 \in(0,1),\ \exists \de \in (0,\al_0),\ \forall \al \in (\al_0-\de,\al_0),\ \mu(\al) \ge \f{\mu(\al_0)}{2}>0.
\end{equation}
Depending on whether $T \in (0,+\infty)$ or $T=+\infty$, we assume either that $F\in L^\infty(0,T;\LL)$ or that $t \mapsto (1+t)^{-m}F(t,\cdot) \in L^\infty(\R_+,\LL)$ for some natural number $m \in \N_0$. 

Then, for all $u_0 \in \LL$ and all $F \in L^\infty(0,T;\LL)$, there exists a unique weak solution $u \in \cC((0,T],\LL)$ to \eqref{equ-u-distri}. Moreover, we have $u \in \cC([0,T],\LL)$ provided $u_0=0$. 
\end{thm}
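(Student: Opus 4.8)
The plan is to follow the Laplace-transform strategy announced in the introduction, in four stages: (i) solve the transformed equation \eqref{eq1} explicitly in the eigenbasis $\{\vp_n\}$; (ii) show the resulting family $s \mapsto \wh v(s)$ is the Laplace transform of a distribution in $\cS'(\R_+,\LL)$ by verifying the Paley--Wiener--Schwartz-type bounds used in \cite{KSY}; (iii) recover $u = v|_Q$ and establish the continuity $u\in\cC((0,T],\LL)$, with $u\in\cC([0,T],\LL)$ when $u_0=0$; (iv) prove uniqueness by injectivity of the Laplace transform on $\cS'(\R_+,\LL)$. First I would expand $\wh v(s) = \sum_{n\in\N} \wh v_n(s)\vp_n$ and $u_0 = \sum_n u_{0,n}\vp_n$, $\wh F(s) = \sum_n \wh F_n(s)\vp_n$; since $A\vp_n=\la_n\vp_n$, equation \eqref{eq1} decouples into the scalar identities $(\la_n + s w(s))\wh v_n(s) = w(s) u_{0,n} + \wh F_n(s)$, so that
\[
\wh v_n(s) = \f{w(s)}{\la_n + s w(s)}\, u_{0,n} + \f{1}{\la_n + s w(s)}\,\wh F_n(s),\qquad s\in\C_+ .
\]
This requires the denominator $\la_n + s w(s)$ to be nonvanishing on $\C_+$: here I would use that for $\Re s>0$ one has $\Re(s^{\al-1})>0$ for $\al\in(0,1)$ (writing $s=|s|\e^{\i\te}$, $|\te|<\pi/2$, the argument of $s^{\al-1}$ is $(\al-1)\te\in(-\pi/2,\pi/2)$), hence $\Re w(s) = \int_0^1 \Re(s^{\al-1})\mu(\al)\da \ge 0$ and in fact $>0$ by the non-triviality of $\mu$ from \eqref{cnd-mu1}; combined with $\la_n\ge c_a>0$ from \eqref{non-degeneracy} this gives $\Re(\la_n + sw(s))\ge c_a>0$, so $\wh v_n(s)$ is well-defined and holomorphic on $\C_+$.

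The heart of the matter is stage (ii): the quantitative lower bounds on $|\la_n + s w(s)|$ that control the two multipliers $w(s)/(\la_n+sw(s))$ and $1/(\la_n+sw(s))$. Condition \eqref{cnd-mu1} is exactly what is needed to bound $|w(s)|$ from below: for $s = \sg>0$ real and large, $w(\sg) = \int_0^1 \sg^{\al-1}\mu(\al)\da \gtrsim \int_{\al_0-\de}^{\al_0}\sg^{\al-1}\da \sim \mu(\al_0)\sg^{\al_0-1}/\log\sg$, so $w$ does not decay too fast; one also needs the matching upper bound $|w(s)|\lesssim |s|^{-1}(1 + |\log|s||)$ for large $|s|$ and $|w(s)| \lesssim \max(|s|^{-1},1)$ near $s=0$, both following from splitting the $\al$-integral at $\al$ near $0$ and near $1$. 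From these I would derive, uniformly in $n$ and on sectors $\{|\arg s|\le \pi/2-\ve\}$ and on all of $\C_+$, estimates of the form $\left|\f{w(s)}{\la_n+sw(s)}\right|\le C$ and $\left|\f{1}{\la_n+sw(s)}\right|\le C\max(|s|, |s|(1+|\log|s||))$ for large $|s|$, plus integrability near $s=0$ — enough polynomial control in $|s|$ and $|\Im s|^{-1}$ to invoke the characterization of Laplace transforms of elements of $\cS'(\R_+,\LL)$ from \cite[Section 2]{KSY}. Summing the Bessel-type series $\sum_n |\wh v_n(s)|^2$, using $\sum_n |u_{0,n}|^2 = \|u_0\|_{\LL}^2<\infty$ and $\|\wh F(s)\|_{\LL}\le \int_0^T\e^{-\Re(s)t}\|F(t,\cdot)\|_{\LL}\dt$ (resp. the $(1+t)^{-m}$-weighted bound when $T=+\infty$), yields $\|\wh v(s)\|_{\LL}$ and $\|A\wh v(s)\|_{\LL}$ with the requisite growth, so $\wh v = \cL[v]$ for a unique $v\in\cS'(\R_+,\LL)$; this is the step I expect to be the main obstacle, precisely because the logarithmic factors coming from \eqref{cnd-mu1} make the bounds more delicate than in the multi-term case of \cite{LLY-AMC}.

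For stage (iii), on $(0,T]$ I would write $u(t) = \f{1}{2\pi\i}\int_{\Re s = \gamma} \e^{st}\wh v(s)\ds$ (a Bromwich-type contour, justified by the distributional inversion and contour deformation permitted by the holomorphy and decay of $\wh v$ on $\C_+$), split it as the $u_0$-part and the $F$-part, and show each is continuous in $t$ with values in $\LL$ by dominated convergence on the series, using the decay $|\e^{st}|=\e^{\gamma t}$ together with the $|s|$-decay of the multipliers; one must also check the initial condition $u(0,\cdot)=u_0$ is attained in the weak/distributional sense encoded by \eqref{eq1}, and that when $u_0=0$ the remaining $F$-contribution extends continuously to $t=0$ with value $0$ because the multiplier $1/(\la_n+sw(s))$ now carries an extra decay in $|s|$ that makes the inversion integral converge uniformly down to $t=0$. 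Stage (iv), uniqueness, is then immediate: two weak solutions have the same Laplace transform by \eqref{eq1} (the left-hand side $A + sw(s)$ is injective on $D(A)$ for $s\in\C_+$ by the coercivity established above), hence agree as elements of $\cS'(\R_+,\LL)$ by injectivity of $\cL$ on that space, hence agree on $Q$. I would finally remark that the regularity $u\in\cC((0,T],\LL)$ is the best one can expect without extra hypotheses, consistently with the $(t|\log t|)^{-1}$ blow-up rate near $t=0$ reported in \cite{LLY15}.
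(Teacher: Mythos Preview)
Your high-level strategy matches the paper's: solve \eqref{eq1} in the eigenbasis, show the result is a Laplace transform, invert, and use injectivity for uniqueness. But stage~(iii) contains a genuine gap. The Bromwich integral $u(t)=\f{1}{2\pi\i}\int_{\Re s=\gamma}\e^{st}\wh v(s)\,\ds$ you write down does \emph{not} converge. On a vertical line $|\e^{st}|=\e^{\gamma t}$ is constant in $\Im s$, while the multiplier $w(s)/(\la_n+sw(s))$ decays only like $(\log|s|)^{-1}$ (this is exactly what your upper bound on $|w(s)|$ gives, combined with $|sw(s)+\la_n|\ge\la_n$), and $1/(\la_n+sw(s))$ decays only like $|s|^{\de-\al_0}$ with $\al_0-\de<1$. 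Neither is in $L^1$ of the vertical line, so you cannot read off a function $u(t)$, let alone a continuous one; your ``dominated convergence on the series'' has nothing to dominate. The abstract Paley--Wiener step in stage~(ii) does produce a $v\in\cS'(\R_+,\LL)$, but not the regularity $u\in\cC((0,T],\LL)$, and it is inconsistent to then write $v$ as an absolutely convergent contour integral.

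The paper repairs this with two devices your proposal omits. First, it \emph{regularizes by extra negative powers of $s$}: for the $u_0$-part it works with $Y(s)u_0:=\sum_n \f{w(s)}{s^{2}(sw(s)+\la_n)}\langle u_0,\vp_n\rangle\vp_n$, which now decays like $|s|^{-2}(\log|s|)^{-1}$ and \emph{is} integrable on vertical lines; one inverts $Y$ to a genuine function $y(t)$ and then recovers $u=\pa_t^2 y$ in $\cS'$. For the $F$-part it uses an extra $s^{-1}$ and obtains $u$ as $\pa_t$ of a convolution. Second, it \emph{extends the multipliers holomorphically to all of} $\C\setminus\R_-$ (this requires the lower bound $|sw(s)+\la|\ge C\min(|s|^{\al_0-\de},|s|^{\al_0})$ on the whole cut plane, not just on $\C_+$) and deforms the Bromwich line to a Hankel contour $\ga(\ve,\te)$ with $\te\in(\f{\pi}{2},\pi)$. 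On the rays $\arg s=\pm\te$ one has $|\e^{st}|=\e^{|s|t\cos\te}$ with $\cos\te<0$, and it is this exponential damping that yields absolute convergence of the representation $S_0(t)u_0$, $S_1(t)\psi$ and hence the continuity in $t>0$. Your analysis stays inside $\C_+$, where no such decay is available. Both ingredients---the $s^{-k}$ regularization followed by differentiation in $t$, and the deformation to a sector with opening angle greater than $\pi$---are what turns the distributional existence into the claimed $\cC((0,T],\LL)$ regularity.
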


Here and in the remaining part of this text, the notation $(0,T]$ (resp., $[0,T]$) stands for $(0,+\infty)$ (resp., $[0,+\infty)$) in the particular case where $T=+\infty$. 
For a finite final time $T$, we have the following improved regularity result.

\begin{thm}
\label{thm-L1}
Fix $T \in (0,+\infty)$ and let $\mu$ be the same as in Theorem \ref{thm-forward}. 
\begin{enumerate}[(a)]
\item Pick $u_0 \in D(A^\ga)$ for $\ga \in (0,1]$, and let $F=0$. Then, the unique weak solution $u$ to \eqref{equ-u-distri} lies in 
$\cC([0,T],\LL)\cap \cC((0,T],H_0^1(\Om) \cap \HH)$ and satisfies the two following estimates
\begin{equation}
\label{esti-u}
\|u(t)\|_\HH
\leq C \e^{T} \|u_0\|_{D(A^\ga)} t^{\ga-1},\ t \in (0,T],
\end{equation}
and 
\begin{equation}
\label{esti-u_t}
\|\pa_tu(t)\|_\LL
\leq C \e^{T}\|u_0\|_{D(A^\ga)} t^{-\be},\ \be \in \left( 1- \al_0 \ga, 1 \right),\ t \in (0,T],
\end{equation}
where $C$ is a positive constant which is independent of $T$, $t$ and $u_0$.
\item Assume that $u_0=0$ and $F \in L^\infty(0,T;\LL)$. Then, for all $\ka\in[0,1)$ and all $p \in  \left[ 1,\f1{1-\al_0(1-\ka)} \right)$, the unique weak solution $u$ to \eqref{equ-u-distri} lies in $L^p(0,T;H^{2\ka}(\Om))$. Moreover, there exists a positive constant $C$, independent of $T$ and $F$, such that we have
\begin{equation}
\label{esti_u-h2}
\|u\|_{L^p(0,T;H^{2\ka}(\Om))} \le C \e^{C T} \|F\|_{L^1(0,T;\LL)}.
\end{equation} 
\end{enumerate}
\end{thm}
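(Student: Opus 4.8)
The plan is to start, in both parts, from the spectral representation of $\wh v(s) = (A+sw(s))^{-1}(w(s)u_0 + \wh F(s))$ obtained by decomposing on the eigenbasis $\{\vp_n\}$, namely $\wh v(s) = \sum_n \langle \wh v(s),\vp_n\rangle \vp_n$ with $\langle \wh v(s),\vp_n\rangle = (\la_n+sw(s))^{-1}(w(s)\langle u_0,\vp_n\rangle + \langle \wh F(s),\vp_n\rangle)$, and then to recover $u$ by inverse Laplace transform $u(t,\cdot) = \frac1{2\pi\i}\int_{\gamma} \e^{st}\wh v(s)\,\ds$ along a suitable contour $\gamma$. The first step is therefore to establish the key resolvent estimates on the multiplier $m_n(s) := (\la_n + sw(s))^{-1}$ and on $sw(s)m_n(s)$: using $w(s) = \int_0^1 s^{\al-1}\mu(\al)\,\da$ together with condition \eqref{cnd-mu1}, one shows that $w(s)$ does not vanish for $\Re s>0$, that $|sw(s)|$ is bounded below on suitable sectors by a positive power of $|s|$ as $|s|\to 0$ (this is exactly where \eqref{cnd-mu1} is used — it forces $w(s) \gtrsim |s|^{\al_0-1}$ on the relevant rays near the origin), and bounded above by $|s|$ times a slowly varying factor as $|s|\to\infty$. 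Combining these with $\la_n\ge c_a>0$ from \eqref{non-degeneracy} gives pointwise bounds such as $|m_n(s)| \le C(\la_n + |s|\,|w(s)|)^{-1}$ and $|\la_n^\ga m_n(s)| \le C |sw(s)|^{-\ga}$ (interpolating between the two obvious bounds), uniformly in $n$.

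For part (a), with $F=0$, I would write $u(t) = \sum_n \big(\frac1{2\pi\i}\int_\gamma \e^{st} w(s) m_n(s)\,\ds\big)\langle u_0,\vp_n\rangle\vp_n$, deform $\gamma$ to a Hankel-type contour (two rays $\arg s = \pm\theta$ with $\theta$ slightly larger than $\pi/2$, joined by a small arc), and estimate each scalar kernel $K_n(t) := \frac1{2\pi\i}\int_\gamma \e^{st}w(s)m_n(s)\,\ds$. Writing $w(s)m_n(s) = \la_n^{-\ga}\cdot \la_n^{\ga-1}\cdot \la_n w(s) m_n(s)$ and using $|\la_n w(s) m_n(s)| \le C$ together with $|\la_n^\ga m_n(s) w(s)|$-type bounds, one gets $|K_n(t)| \le C \la_n^{-\ga} t^{\ga-1}\e^{t}$ (the $\e^T$ in \eqref{esti-u} comes from allowing the contour to pass to the right of a bounded region, or equivalently from a crude shift $\Re s \le 1$). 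Since $\A u = \sum_n \la_n K_n(t)\langle u_0,\vp_n\rangle\vp_n$ and $\|u_0\|_{D(A^\ga)}^2 = \sum_n \la_n^{2\ga}|\langle u_0,\vp_n\rangle|^2$, summing $\|\A u(t)\|_\LL^2 \le C \e^{2T} t^{2\ga-2}\sum_n \la_n^{2\ga}|\langle u_0,\vp_n\rangle|^2$ yields \eqref{esti-u} after invoking elliptic regularity $\|u\|_\HH \le C\|\A u\|_\LL$ on $D(A)$. For \eqref{esti-u_t} one differentiates under the integral, picking up an extra factor $s$, i.e. estimates $\frac1{2\pi\i}\int_\gamma s\e^{st}w(s)m_n(s)\,\ds$; the gain $|sw(s)|^{-\al_0\ga}$ near the origin is what produces the exponent $\be > 1-\al_0\ga$ and the requirement $\be<1$ keeps the contour integral convergent. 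Continuity in $t$ on $(0,T]$ (and up to $t=0$ when combined with the density of $D(A)$) follows from dominated convergence applied to the series.

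For part (b), with $u_0=0$, I would represent $u(t) = \sum_n \big(\frac1{2\pi\i}\int_\gamma \e^{st}m_n(s)\langle\wh F(s),\vp_n\rangle\,\ds\big)\vp_n$, which by the convolution theorem equals $\int_0^t \sum_n G_n(t-\tau)\langle F(\tau),\vp_n\rangle\vp_n\,\d\tau$ where $G_n(t) := \frac1{2\pi\i}\int_\gamma \e^{st}m_n(s)\,\ds$. The scalar kernel satisfies, via the contour estimate and interpolation, $\la_n^\ka |G_n(t)| \le C\, t^{\al_0(1-\ka)-1}\e^{t}$ for $\ka\in[0,1)$; hence $\|t\mapsto \la_n^\ka G_n(t)\|_{L^1(0,T)} \le C\e^T$ uniformly in $n$, and more precisely $\|\la_n^\ka G_n\|_{L^{p'}}$-type control is not needed — instead one uses that $t\mapsto t^{\al_0(1-\ka)-1}$ lies in $L^p(0,T)$ exactly when $p<1/(1-\al_0(1-\ka))$, which is the stated range of $p$. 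Then Young's inequality for the convolution in time, applied after using Parseval to pass $\|A^\ka u(t)\|_\LL^2 = \sum_n \la_n^{2\ka}|\langle u(t),\vp_n\rangle|^2$, gives $\|A^\ka u\|_{L^p(0,T;\LL)} \le \|\,t\mapsto \sup_n \la_n^\ka|G_n(t)|\,\|_{L^p(0,T)}\|F\|_{L^1(0,T;\LL)} \le C\e^{CT}\|F\|_{L^1(0,T;\LL)}$, and the elliptic-regularity identification $D(A^\ka) = H^{2\ka}(\Om)$ (up to boundary conditions, valid in the relevant range of $\ka$) converts this into \eqref{esti_u-h2}.

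The main obstacle I anticipate is the uniform-in-$n$ control of the contour integrals near $s=0$: because $w(s) = \int_0^1 s^{\al-1}\mu(\al)\,\da$ is a superposition over all orders $\al\in(0,1)$ rather than a single power $s^{\al-1}$, the behaviour of $sw(s)$ as $|s|\to 0$ along the Hankel contour is not a clean power law, and one must carefully use the lower bound furnished by \eqref{cnd-mu1} on the sub-interval $(\al_0-\de,\al_0)$ to extract the effective exponent $\al_0$ while simultaneously controlling the argument of $w(s)$ so that $\la_n + sw(s)$ stays away from zero (so that $m_n$ has no poles on or to the right of $\gamma$). Getting these estimates to be genuinely uniform in $\la_n\in[c_a,\infty)$ — so that the $n$-sum converges and produces clean powers of $t$ — is the technical heart of the proof; everything else (Parseval, Young, elliptic regularity, passage to continuity) is routine.
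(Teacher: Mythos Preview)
Your proposal is correct and follows essentially the same route as the paper: spectral decomposition plus inverse Laplace transform along a Hankel contour $\ga(\ve,\te)$, uniform-in-$n$ pointwise bounds on the scalar kernels (the paper's $E_n$ and $G_n$, obtained by splitting the rays at $r=\eta\la_n$), the identity $\d E_n/\d t = -\la_n G_n$ for \eqref{esti-u_t}, and Young's inequality for part~(b). One small correction: the kernel bound you state, $\la_n^\ka|G_n(t)| \lesssim t^{\al_0(1-\ka)-1}$, is the unattained borderline---the paper gets only $t^{-\be}$ for $\be$ \emph{strictly} larger than $1-\al_0(1-\ka)$ (because \eqref{cnd-mu1} yields $|sw(s)| \gtrsim |s|^\rho$ only for $\rho<\al_0$), but this does not affect the stated $L^p$ range since for each admissible $p$ one picks $\be\in(1-\al_0(1-\ka),1/p)$.
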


Notice that the second claim of Theorem \ref{thm-forward} states that $u \in L^p(0,T; D(A^\ka))$ for suitable values of $p$, provided $u_0=0$ and $F\in L^\infty(0,T;\LL)$. Here $\ka$ can be arbitrarily close to $1$ without actually becoming $1$, 
except for the special case treated by the following result, where the density function 
$\mu$ vanishes in a neighborhood of the endpoint of the interval $(0,1)$. 


\begin{thm}
\label{thm-H2}
Let $\mu \in \cC([0,1],\R_+)$ satisfy \eqref{cnd-mu1}. Assume moreover that there exists $\alpha_1 \in (\alpha_0,1)$, where $\alpha_0$ is the same as in \eqref{cnd-mu1}, such that
\begin{equation}
\label{cnd-mu2}
\mu(\alpha)=0,\ \alpha \in (\alpha_1,1).
\end{equation}
Then, for $u_0=0$ and $F\in L^\infty(0,T;\LL)$, there exists a unique solution $u \in L^2(0,T;\HH)$ to the IBVP \eqref{equ-u-distri}, satisfying
$$
\norm{u}_{L^2(0,T;\HH)} \leq C \norm{F}_{L^2(Q)},
$$
for some positive constant $C$, which is independent of $F$.
\end{thm}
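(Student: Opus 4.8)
The plan is to establish the a priori estimate in the Laplace domain first and then invert. Starting from Definition \ref{def-u-distri}, the Laplace transform $\wh v(s)$ of any weak solution solves $(A + s w(s)) \wh v(s) = \wh F(s)$ for $s \in \C_+$, since $u_0 = 0$. Because $A$ is selfadjoint with spectrum bounded below by $c_a > 0$, and because the hypothesis \eqref{cnd-mu2} together with $\mu \in \cC([0,1],\R_+)$ forces $w(s) = \int_0^{\al_1} s^{\al-1}\mu(\al)\,\d\al$ to behave like $O(|s|^{\al_1 - 1})$ for large $|s|$ with $\Re(s w(s)) \ge 0$ when $\Re s \ge 0$ (this positivity is the key consequence of $\mu \ge 0$: writing $s = \rho \e^{\i\te}$ with $|\te| < \pi/2$, each $s^{\al-1}$ has argument $(\al-1)\te \in (-\pi/2, \pi/2)$, and then $s^\al$ has argument $\al\te$, still in $(-\pi/2,\pi/2)$, so $\Re(s^\al) \ge 0$), the operator $A + s w(s)$ is boundedly invertible on $\LL$. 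I would use the spectral decomposition to write
$$
\wh v(s) = \sum_{n \in \N} \f{\langle \wh F(s), \vp_n\rangle}{\la_n + s w(s)} \vp_n,
$$
and correspondingly
$$
A \wh v(s) = \sum_{n \in \N} \f{\la_n \langle \wh F(s), \vp_n\rangle}{\la_n + s w(s)} \vp_n,
$$
so that $\|A \wh v(s)\|_{\LL} \le \sup_{n} \f{\la_n}{|\la_n + s w(s)|} \|\wh F(s)\|_{\LL}$. Since $\D(A) = H_0^1(\Om) \cap \HH$ with the graph norm equivalent to the $\HH$ norm, controlling $\|A\wh v(s)\|_\LL$ gives control of $\|\wh v(s)\|_\HH$.

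The crucial analytic point is the bound $\sup_{n\in\N} \f{\la_n}{|\la_n + s w(s)|} \le C |s|^{1-\al_1}$ for $s$ on the line $\Re s = \ep$, uniformly as $\ep \downarrow 0$, or more precisely an $L^2$-in-$\Im s$ version of it. From below I would estimate $|\la_n + s w(s)| \ge |s w(s)|$ using $\Re(\la_n + s w(s)) \ge \la_n \ge 0$ and $\Re(s w(s)) \ge 0$ — more carefully, $|\la_n + s w(s)|^2 = |\la_n + \Re(sw(s))|^2 + |\Im(sw(s))|^2 \ge |s w(s)|^2$ is false in general, so instead I use that both $\la_n$ and $sw(s)$ lie in the right half-plane, whence $|\la_n + sw(s)| \ge \f{1}{\sqrt2}(\la_n + |sw(s)|) \ge \f12\max(\la_n, |sw(s)|)$ up to a harmless constant (angles in $(-\pi/2,\pi/2)$). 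Then $\f{\la_n}{|\la_n + sw(s)|} \le C \min\!\left(1, \f{\la_n}{|sw(s)|}\right)$, but to get an $s$-decaying bound I rather split: if $\la_n \le |sw(s)|$ then the quotient is $\le C$, and the number of such $n$ contributes via Weyl's law; if $\la_n > |sw(s)|$ the quotient is still $\le C\la_n/|sw(s)|$. The clean way, used in \cite{LLY-AMC} for the multi-term case, is to combine both: $\f{\la_n}{|\la_n+sw(s)|} \le \f{\la_n}{|\la_n+sw(s)|}^{1-\te}\cdot\f{\la_n}{|\la_n+sw(s)|}^{\te} \le C \left(\f{\la_n}{|sw(s)|}\right)^{\te}$ when $\te\in(0,1)$ is chosen so the resulting power of $|s|$ is $L^2$-integrable on the vertical line near $\Re s = 0$; here $|sw(s)| \sim c |s|^{\al_1}$ for $|s|$ large forces $\te\al_1$-type decay, and $\te$ can be taken up to $1$ because $\mu$ vanishes near $1$ makes the critical exponent exactly $1$ rather than approached from below (this is precisely the mechanism that Theorem \ref{thm-H2} improves over Theorem \ref{thm-L1}(b)).

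Once the pointwise/$L^2$-in-frequency bound
$$
\int_{\R} \|A \wh v(\ep + \i\eta)\|_{\LL}^2 \, \d\eta \le C \int_{\R} \|\wh F(\ep+\i\eta)\|_{\LL}^2 \, \d\eta
$$
is in hand uniformly in $\ep > 0$, I invoke the Paley--Wiener / Plancherel theorem for the Laplace transform: the right side is, by Plancherel and the Paley--Wiener characterization of $\cL[F\mathbbm 1_{(0,T)}]$ for $F \in L^2(Q)$ (noting $L^\infty(0,T;\LL) \hookrightarrow L^2(Q)$ when $T<\infty$), controlled by $\|F\|_{L^2(Q)}^2$, while the left side being finite and uniformly bounded shows that $A v \in \cS'(\R_+,\LL)$ is actually (the Laplace transform of) an $L^2(\R_+;\LL)$ function supported in $\R_+$, with $\|Av\|_{L^2(\R;\LL)} \le C\|F\|_{L^2(Q)}$; restricting to $Q$ and using the elliptic regularity identification $\D(A) = H_0^1(\Om)\cap\HH$ yields $u \in L^2(0,T;\HH)$ with $\|u\|_{L^2(0,T;\HH)} \le C\|F\|_{L^2(Q)}$. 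Uniqueness follows because any two weak solutions have the same Laplace transform (forced by \eqref{eq1} with $u_0 = 0$ and the injectivity of $A + sw(s)$), hence agree as distributions in $\cS'(\R_+,\LL)$ and thus on $Q$. I expect the main obstacle to be the frequency-line estimate near $\Re s = 0$: one must verify that the lower bound on $|\la_n + sw(s)|$ does not degenerate as $\Re s \downarrow 0$ (using $\la_n \ge c_a > 0$ from \eqref{non-degeneracy} to keep the denominator away from zero even when $sw(s)$ is small) and that the large-$|s|$ decay rate $|sw(s)| \gtrsim |s|^{\al_1}$ is genuinely achieved — this last point is where continuity of $\mu$ on $[0,1]$ and condition \eqref{cnd-mu2} are used, since $w(s) = \int_0^{\al_1} s^{\al-1}\mu(\al)\,\d\al$ and the upper endpoint $\al_1 < 1$ of the support of $\mu$ dictates the leading-order growth of $|s w(s)|$ as $\Re(s w(s)) \asymp |s|^{\al_1}$ along rays in $\C_+$.
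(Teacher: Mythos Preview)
Your Plancherel/Paley--Wiener strategy is viable and genuinely different from the paper's, but your identification of the ``crucial analytic point'' is off and leads you into unnecessary complications. For $s\in\C_+$, writing $s=r\e^{\i\te}$ with $|\te|<\pi/2$, you already observed that $\Re(sw(s))=\int_0^1 r^\al\cos(\al\te)\mu(\al)\,\d\al\ge0$; this immediately yields $|\la_n+sw(s)|\ge\Re(\la_n+sw(s))=\la_n+\Re(sw(s))\ge\la_n$, so the spectral multiplier $\la_n/|\la_n+sw(s)|$ is bounded by $1$ uniformly on $\C_+$. No power decay in $|s|$, no interpolation parameter, and no Weyl counting are needed --- uniform boundedness of the multiplier is exactly what Plancherel requires to transfer the $L^2$ bound from $\wh F$ to $A\wh v$ mode by mode. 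In particular your argument, done correctly, never uses \eqref{cnd-mu2} or the continuity of $\mu$; this is consistent with the paper's own remark, immediately after the statement of Theorem~\ref{thm-H2}, that \eqref{cnd-mu2} is only a technical convenience. Your discussion of $|sw(s)|\gtrsim|s|^{\al_1}$ and the role of $\al_1$ is therefore a red herring in your own framework.

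The paper takes a purely time-domain route. From the Duhamel formula \eqref{sol-u-distri} it writes $\langle Au(t),\vp_n\rangle_\LL=\la_n(G_n*F_n)(t)$, applies Young's inequality $\|\la_n G_n*F_n\|_{L^2(0,T)}\le\la_n\|G_n\|_{L^1(0,T)}\|F_n\|_{L^2(0,T)}$, and then proves $\la_n\|G_n\|_{L^1(0,T)}\le C$ via Lemma~\ref{lem-G_n}. That lemma deforms the contour $\ga(\ve,\te)$ onto the negative real axis to obtain $G_n(t)=-\f1\pi\int_0^\infty\Phi_n(r)\e^{-rt}\,\d r$ and then establishes $\int_0^\infty\Phi_n(r)/r\,\d r\le C/\la_n$; it is in this last integral bound (step \eqref{g5} of the appendix) that the support condition \eqref{cnd-mu2} and the continuity of $\mu$ are genuinely used. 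Your frequency-domain argument bypasses Lemma~\ref{lem-G_n} altogether; it gives a shorter proof under weaker hypotheses on $\mu$, while the paper's approach yields the sharper kernel information $\|G_n\|_{L^1(0,T)}\le C/\la_n$, which is of independent use.
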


It turns out that the statement of Theorem \ref{thm-H2} remains valid without the technical assumption \eqref{cnd-mu2}, but this is at the price of greater difficulties in the derivation of the corresponding result. Hence, in order to avoid the inadequate expense of the size of this article, we shall go no further into this direction.

The following result claims that the solution to \eqref{equ-u-distri} is analytic in time, provided the source term can be holomorphically extended to a neighborhood of the positive real axis. This statement is of great interest in the analysis of inverse coefficient problems associated with time-fractional diffusion equations, see e.g. \cite{KOSY,KSY,LIY}.
\begin{thm}
\label{thm-an} 
Let the conditions of Theorem \ref{thm-forward} be satisfied with $T=+\infty$. Assume moreover that there exists $\rho \in (0,+\infty)$ such that the source term $t \mapsto F(t,\cdot)$ is extendable to a holomorphic function of the half-strip $S_\rho := \{ z \in \C; \Re z \ \in(-\rho,+\infty),\ \Im z \in (-\rho,\rho) \}$ into $\LL$, where we recall that $\Re z$ (resp., $\Im s$) stands for the real (resp., imaginary) part of $z$. Then, the weak-solution $t \mapsto u(t,\cdot)$ to \eqref{equ-u-distri}, given by Theorem \ref{thm-forward}, can be extended to an analytic function of $(0,+\infty)$ into $\LL$. 
\end{thm}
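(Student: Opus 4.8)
The plan is to exploit the representation of the weak solution furnished by the Laplace-transform characterization of Definition \ref{def-u-distri} together with the spectral decomposition of $A$. By Theorem \ref{thm-forward} with $T=+\infty$, the weak solution $u$ exists and, for each $s\in\C_+$, the identity $(A+sw(s))\wh v(s)=w(s)u_0+\wh F(s)$ holds, so that expanding on the orthonormal basis $\{\vp_n\}$ we get, for a.e. $x\in\Om$, a series $u(t,x)=\sum_{n\ge 1} u_n(t)\vp_n(x)$ whose components $u_n$ are the inverse Laplace transforms of $s\mapsto (\la_n+sw(s))^{-1}\bigl(w(s)\langle u_0,\vp_n\rangle+\langle\wh F(s),\vp_n\rangle\bigr)$. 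The first step is thus to write each $u_n(t)$ as a Bromwich-type contour integral $u_n(t)=\frac1{2\pi\i}\int_{\Ga} \e^{st}(\la_n+sw(s))^{-1}(w(s)\langle u_0,\vp_n\rangle+\langle\wh F(s),\vp_n\rangle)\,\d s$ and to deform the vertical Bromwich line into a Hankel-type contour $\Ga_\theta$ lying in a sector $\{|\arg s|\le\theta\}$ with $\theta\in(\pi/2,\pi)$, using that $w$ is holomorphic on $\C\setminus(-\infty,0]$, that $\Re(sw(s))\ge 0$ there so the resolvent factor has no poles, and that the holomorphic extension of $\wh F$ to $S_\rho$ (obtained from the hypothesis on $F$ by contour-shifting in the Laplace integral) allows the contour to enter $\Re s<0$.

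The second step is to show that along such a contour the integrand is, after the substitution $s=\tau$ on each ray, an analytic function of the complex time variable in a neighborhood of any $t_0>0$: one writes $t=t_0+\zeta$, estimates $|\e^{s(t_0+\zeta)}|\le \e^{\Re s\, t_0}\e^{|s||\zeta|}$, and uses the decay $|\e^{\Re s\, t_0}|$ on the rays (where $\Re s\to-\infty$) to absorb the growth $\e^{|s||\zeta|}$ for $|\zeta|$ small, so that the contour integral converges locally uniformly in $\zeta$ and hence, by Morera/Fubini, defines a holomorphic function of $\zeta$. Here the bounds on $w(s)$ near $0$ and near $\infty$ — in particular $|w(s)|\gtrsim |s|^{\al_0-1}$ for small $s$ coming from condition \eqref{cnd-mu1}, exactly as in the proof of Theorem \ref{thm-forward} — are what guarantee $|\la_n+sw(s)|^{-1}$ is integrable against $\e^{st}$ on the curved part of the contour.

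The third step is the summation over $n$: one must check that $\sum_n u_n(t)\vp_n$ converges in $\LL$ locally uniformly in the complex time neighborhood, so that the sum is again $\LL$-valued holomorphic. This is done by the same Parseval-type estimates used for Theorem \ref{thm-forward}, now with the extra exponential factor from the shifted contour controlled uniformly in $n$ since the contour can be chosen independent of $n$ (the resolvent set of $-sw(s)$ contains the whole sector for every $\la_n\ge c_a>0$ by \eqref{non-degeneracy}); the price is a factor $\e^{Ct}$ matching the $\e^{CT}$-type growth already present in Theorems \ref{thm-forward}--\ref{thm-H2}. Finally one identifies this holomorphic $\LL$-valued function on $(0,+\infty)$ with $u$ by uniqueness of the inverse Laplace transform and the uniqueness part of Theorem \ref{thm-forward}.

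The main obstacle I expect is the uniform-in-$n$ control of the contour integrals when one pushes the contour into $\Re s<0$: the kernel $(\la_n+sw(s))^{-1}$ is well behaved, but the interplay between the lower bound $|w(s)|\gtrsim|s|^{\al_0-1}$ near the origin, the possibly merely-$L^\infty$ regularity of $\mu$ (so $w$ has limited smoothness up to the cut), and the growth of $\wh F$'s holomorphic extension on the boundary of $S_\rho$ must be balanced so that the radius of the time-analyticity neighborhood does not shrink to zero as $n\to\infty$; handling the endpoint behavior of $w$ at $s\to 0$ along the rotated rays, where one no longer has $\Re s>0$, is the delicate point and is where the argument most closely parallels, but must strengthen, the estimates already established for Theorem \ref{thm-forward}.
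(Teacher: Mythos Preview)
Your treatment of the homogeneous part (the $u_0$ contribution) is essentially the paper's: deform to the Hankel contour $\gamma(\varepsilon,\theta)$ and use the exponential decay of $\e^{zs}$ along the rays $\arg s=\pm\theta$ to get holomorphy of $z\mapsto S_0(z)u_0$ in a sector. That part is fine.

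The gap is in your handling of the inhomogeneous part. You propose to keep $\wh F(s)$ inside the Bromwich integrand and then deform the $s$-contour into $\Re s<0$, claiming that the holomorphy of $F$ in $S_\rho$ yields a holomorphic extension of $\wh F$ ``by contour-shifting in the Laplace integral''. But under the standing hypotheses ($t\mapsto(1+t)^{-m}F(t,\cdot)\in L^\infty$, i.e.\ $F$ may grow polynomially at $+\infty$), the integral $\wh F(s)=\int_0^\infty \e^{-st}F(t)\,\dt$ diverges for every $s$ with $\Re s\le 0$, and the analyticity of $F$ in the strip $S_\rho$ (a statement about the $t$-variable near the real axis) does nothing to remedy this: what governs the domain of $\wh F$ is the decay of $F$ at infinity, not its local regularity. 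So the integrand $(\la_n+sw(s))^{-1}\langle\wh F(s),\vp_n\rangle$ is simply undefined on the Hankel rays, and the deformation you describe cannot be carried out.

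The paper avoids this by never putting $\wh F$ on the deformed contour. Instead it uses the Duhamel representation $u(t)=\int_0^t S_1(\tau)F(t-\tau)\,\d\tau$, where only $S_1(\tau)=\f1{2\pi\i}\int_{\gamma(\varepsilon,\theta)} s\Phi(s)\e^{\tau s}\,\ds$ lives on the Hankel contour and $F$ stays in the time variable. The holomorphy of $F$ in $S_\rho$ is then used \emph{directly}: for $z$ in a small disk $D(t_0,\de_1)$ one replaces the real interval $[0,t]$ by a path $[0,t_1]\cup[t_1,z]$ in the $\tau$-plane and checks that $\tau\mapsto s\Phi(s)\e^{\tau s}F(z-\tau)$ is holomorphic (since $z-\tau\in S_\rho$), that $z\mapsto\int_{[t_1,z]}(\cdots)\d\tau$ is holomorphic by a direct difference-quotient computation, and that the subsequent $s$-integration over $\gamma(\varepsilon,\theta)$ preserves holomorphy thanks to the uniform bound $\norm{s\Phi(s)}_{\cB(\LL)}\le C|s|^{-\al_0+\de}$ from Lemma~\ref{lem-sw>s}. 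No extension of $\wh F$ is needed or possible; the role of $S_\rho$ is precisely to make $F(z-\tau)$ meaningful for complex $z$.
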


The last result of this paper is similar to \cite[Theorem 2.3]{LLY-AMC}, which was established in the framework of multi-terms time-fractional diffusion equations with positive constant coefficients. It is useful for the optimization approach to the inverse problem of determining the weight function $\mu$ together with the diffusion matrix $a:=(a_{i,j})_{1 \le i,j \le d}$ and the electric potential $q$, appearing in the definition of the operator $\A$, by extra data of the solution to  \eqref{equ-u-distri}. Namely, we claim for all {\it a priori} fixed $M \in (0,+\infty)$ that the weak solution to the IBVP \eqref{equ-u-distri} associated with $F=0$, depends Lipschitz continuously on $(\mu,a,q)$ in $\cW \times \cD(M) \times \cQ(M)$, where 
$\cW :=\left\{ \mu\in L^\infty(0,1;\R_+)\ \mbox{satistying}\ \eqref{cnd-mu1} \right\}$
is the set of admissible weight functions, 
$$ \cD(M)  := \{ a = a^T = (a_{i,j})_{1 \le i,j \leq d} \in C^1(\ov\Om,\R^{d^2})\ \mbox{fulfilling}\ \eqref{a-ell}\ \mbox{and}\ \norm{a}_{C^1(\ov\Om)}\le M \}, $$
with $a^T$ the transpose matrix of $a$, denotes the set of admissible diffusion matrices and 
$$ \cQ(M) := \{ q \in C^0(\ov\Om)\ \mbox{obeying}\ \eqref{p-nn}\ \mbox{and}\ \norm{q}_{C^0(\ov\Om)}\le M \}, $$
is the set of admissible electric potentials.

\begin{thm}
\label{thm-Lip-coef}
Let $T \in (0,+\infty)$, fix $M \in (0,+\infty)$ and pick $(\mu,a,q)$ and $(\wt \mu, \wt a,\wt q)$ in $\cW \times \cD(M) \times \cQ(M)$. For $u_0 \in D(A^\ga)$, where $\ga \in (0,1]$ is fixed, let
$u$ (resp., $\wt u$) denote the weak solution to the IBVP
\eqref{equ-u-distri} (resp., the IBVP \eqref{equ-u-distri} where $(\wt \mu,\wt a, \wt q)$ is substituted for $(\mu,a,q)$) with uniformly zero source term, given by Statement (a) in Theorem \ref{thm-L1}.

Then, for all $\ka\in(0,1)$ and all $p \in \left[1, \f{1}{1-\al_0(1-\ka)} \right)$, there exists a constant $C \in (0,+\infty)$, depending only on $T$, $M$, $\mu$, $\ga$, $\ka$, $p$ and $c_a$, such that we have
\begin{equation}
\label{eq-Lip-coef}
\|u-\wt u\|_{L^p(0,T;H^{2 \ka}(\Om))}
\le C \left( \|\mu-\wt\mu\|_{L^\infty(0,1)}
+\|a-\wt a\|_{C^1(\ov\Om)} + \|q-\wt q\|_{C^0(\ov\Om)}\right).
\end{equation}
\end{thm}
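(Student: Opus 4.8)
The plan is to reduce the estimate \eqref{eq-Lip-coef} to a stability analysis at the level of the Laplace transforms, exactly as the weak solution is defined. Write $w(s):=\int_0^1 s^{\al-1}\mu(\al)\,\da$ and $\wt w(s)$ for the analogous quantity built on $\wt\mu$, and let $A$, $\wt A$ be the elliptic operators associated with $(a,q)$, $(\wt a,\wt q)$. By Definition \ref{def-u-distri} with $F=0$, the Laplace transforms satisfy $(A+sw(s))\wh v(s)=w(s)u_0$ and $(\wt A+s\wt w(s))\wh{\wt v}(s)=\wt w(s)u_0$ for $s\in\C_+$. Subtracting, the difference $\wh v(s)-\wh{\wt v}(s)$ solves an equation of the form $(A+sw(s))(\wh v(s)-\wh{\wt v}(s))=G(s)$, where the right-hand side collects three contributions: $(w(s)-\wt w(s))u_0$, the term $-(sw(s)-s\wt w(s))\wh{\wt v}(s)$ coming from the first-order part, and $-(A-\wt A)\wh{\wt v}(s)$ coming from the difference of the elliptic operators. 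The first would be controlled by $\|\mu-\wt\mu\|_{L^\infty(0,1)}$ (note $|w(s)-\wt w(s)|\le \|\mu-\wt\mu\|_{L^\infty}\int_0^1|s|^{\Re\al-1}\,\da$, with uniform bounds on $\Re(\C_+)$ strips inherited from \eqref{cnd-mu1}), and the third is an operator whose $H^1_0(\Om)\to H^{-1}(\Om)$ norm is bounded by $\|a-\wt a\|_{C^1(\ov\Om)}+\|q-\wt q\|_{C^0(\ov\Om)}$. The second term is the most delicate because it involves $\wh{\wt v}(s)$ itself, but on $\cW\times\cD(M)\times\cQ(M)$ we have uniform-in-coefficients a priori bounds on $\wh{\wt v}(s)$ — these are precisely the resolvent-type estimates already used to prove Theorems \ref{thm-forward} and \ref{thm-L1}, which depend only on $M$, $c_a$, $\ga$ and the structural constant $\al_0$ from \eqref{cnd-mu1}, not on the individual triple.

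The key technical input is a uniform resolvent bound: for $(\mu,a,q)\in\cW\times\cD(M)\times\cQ(M)$ the operator $(A+sw(s))^{-1}$ exists on $\C_+$ and, when composed with the fractional powers used in Statement (a) of Theorem \ref{thm-L1}, obeys the same $|s|$-decay estimates with constants controlled solely by $M$, $c_a$, $\ga$ and $\al_0$. This is available because the spectral decomposition underlying the proof of Theorem \ref{thm-L1} only uses \eqref{non-degeneracy}, which holds uniformly on $\cD(M)\times\cQ(M)$ thanks to \eqref{a-ell}, and the lower bound $|s+\la_n^{-1}sw(s)|^{-1}$-type estimates only use \eqref{cnd-mu1} through $\al_0$. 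Granting this, I would estimate $\|(A+sw(s))^{-1}G(s)\|$ in the relevant norm $D(A^\ka)$ (equivalently $H^{2\ka}(\Om)$ for $\ka\in(0,1)$, $\ka\neq 3/4$, by elliptic regularity and the $\cC^{1,1}$ boundary) by splitting according to the three pieces of $G(s)$ and invoking the uniform bounds, producing $\|\wh v(s)-\wh{\wt v}(s)\|_{D(A^\ka)}\le C\,\Phi(|s|)\big(\|\mu-\wt\mu\|_{L^\infty(0,1)}+\|a-\wt a\|_{C^1(\ov\Om)}+\|q-\wt q\|_{C^0(\ov\Om)}\big)$ with $\Phi$ an explicit power of $|s|$ that is $L^p$-integrable along vertical lines in the same range $p\in[1,1/(1-\al_0(1-\ka)))$ as in Theorem \ref{thm-L1}.

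Finally I would invert the Laplace transform. As in the proof of Theorem \ref{thm-L1}(b), one represents $u(t)-\wt u(t)=\frac{1}{2\pi\i}\int_{\Re s=\eta}\e^{st}\big(\wh v(s)-\wh{\wt v}(s)\big)\,\d s$ for suitable $\eta>0$ (here the finiteness of $T$ allows the crude bound $\e^{\eta t}\le \e^{\eta T}$), deform the contour and use the $L^p$-in-frequency bound together with the Hausdorff–Young inequality (or the same Mittag-Leffler/Carlson-type integral lemma invoked earlier for Theorem \ref{thm-L1}) to get $\|u-\wt u\|_{L^p(0,T;H^{2\ka}(\Om))}\le C\e^{CT}\big(\|\mu-\wt\mu\|_{L^\infty(0,1)}+\|a-\wt a\|_{C^1(\ov\Om)}+\|q-\wt q\|_{C^0(\ov\Om)}\big)$, which is \eqref{eq-Lip-coef}.

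I expect the main obstacle to be making the a priori bound on $\wh{\wt v}(s)$ genuinely \emph{uniform} over the admissible class — in particular, controlling the near-origin behavior of $s\mapsto sw(s)$ (which governs the low-frequency, i.e. large-time, regime) independently of $\mu\in\cW$, since $w(s)$ need not be comparable to a single power of $s$ and \eqref{cnd-mu1} only pins down one point $\al_0$. The resolution is that, as in the existing proofs, it is enough to bound $s w(s)$ \emph{from below} (up to the structural constant $\mu(\al_0)\de$, which can be absorbed into $C$ after noting the Lipschitz estimate is required only for fixed $\mu$, as the statement allows $C$ to depend on $\mu$) and to exploit that the "bad" factor $|w(s)-\wt w(s)|/|w(s)|$ stays bounded because both $w$ and $\wt w$ are built from the \emph{same} class and the difference is first order in $\|\mu-\wt\mu\|_{L^\infty}$ while $|w(s)|$ has a coefficient-independent lower bound on every strip $\Re s\in[\eta,\infty)$.
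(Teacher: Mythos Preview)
Your Laplace-domain approach is viable but takes a different route from the paper. The paper works entirely in the time domain: it observes that $v:=u-\wt u$ solves the $(\mu,a,q)$-equation with zero initial data and source term $F=\D_t^{(\wt\mu-\mu)}\wt u+L_{a-\wt a,q-\wt q}\wt u$, then applies the already-established estimate \eqref{esti_u-h2-b} to bound $\|v\|_{L^p(0,T;H^{2\ka})}$ by $\|F\|_{L^1(0,T;\LL)}$, and finally bounds $\|F\|_{L^1(0,T;\LL)}$ using the pointwise-in-$t$ estimates \eqref{esti-u}--\eqref{esti-u_t} for $\wt u$ (giving $\|\wt u\|_{L^1(0,T;\HH)}$ and $\|\pa_t\wt u\|_{L^1(0,T;\LL)}$, the latter needed to control the Caputo derivatives in $\D_t^{(\wt\mu-\mu)}\wt u$). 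Your $G(s)$ is precisely $\cL[F](s)$, so the algebraic decomposition is the same; the difference is that the paper treats Theorems \ref{thm-L1}(a) and (b) as black boxes and never reopens the resolvent analysis, whereas you propose to redo the contour estimates directly on $\wh v-\wh{\wt v}$. The paper's route is shorter and more modular; your route would require you to actually carry out the uniform resolvent bounds you ``grant'' and to carefully track the powers of $|s|$ in the term $(sw(s)-s\wt w(s))\wh{\wt v}(s)$ (note $|sw(s)-s\wt w(s)|$ can grow like $|s|/|\log|s||$, so you genuinely need the full $D(\wt A)$-decay of $\wh{\wt v}(s)$, not just $\LL$). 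Two small slips in your sketch: you want $(A-\wt A):\HH\to\LL$ rather than $H_0^1\to H^{-1}$, since the resolvent estimate you intend to apply is from $\LL$ to $D(A^\ka)$; and the $L^p$-in-time bound in the paper comes from Young's convolution inequality applied to the Duhamel representation (Lemma \ref{lem-strichartz-nonhomo} gives $\|A^\ka S_1(t)\|\le Ct^{-\be}$), not from Hausdorff--Young on a vertical line.
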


\subsection{Generalization of the results}

In Theorems \ref{thm-forward}, \ref{thm-L1} and \ref{thm-H2}, the function $F$ is assumed to be bounded in time over $(0,T)$ but it turns out that
in many applications the relevant source term appearing in \eqref{equ-u-distri} lies in $L^1(0,T;\LL)$.
In such a case the Definition \ref{def-u-distri} of a weak solution to the IBVP \eqref{equ-u-distri} is no longer valid but it can be easily adapted to the framework of $F \in L^1(0,T;\LL)$ with a density argument we make precise below.

Let us start by introducing the following notation: for all $\varphi \in L^\infty(0,T;\LL)$ we denote by $u(\varphi)$ the $L^1(0,T;H_0^1(\Om))$-solution to the system \eqref{equ-u-distri} associated with $u_0=0$ and $F=\varphi$, which is given by Statement (b) in Theorem \ref{thm-L1} with $p=1$ and $\kappa=\f{1}{2}$.
Then, for $T \in (0,+\infty)$, $u_0=0$ and $F \in L^1(0,T;\LL)$, we say that $u \in L^1(0,T;H_0^1(\Om))$ is a weak solution to \eqref{equ-u-distri} if $u$ is the $L^1(0,T;H^1(\Om))$-limit of $\{ u(F_n),\ n \in \N \}$, where $\{ F_n,\ n \in \N \} \in L^\infty(0,T;\LL)^\N$ is an approximation sequence of $F$ in $L^1(0,T;\LL)$, i.e.
\begin{equation}
\label{equ-lim}
\lim_{n \to \infty} \norm{u-u(F_n)}_{L^1(0,T;H^1(\Om))}=0,
\end{equation}
where $F_n \in L^\infty(0,T;\LL)$ for all $n \in \N$ and $\lim_{n\to\infty} \norm{F-F_n}_{L^1(0,T;\LL)}=0$.

Notice that the above definition is meaningful
in the sense that, firstly, there exists a unique function $u \in L^1(0,T;H_0^1(\Om))$ obeying \eqref{equ-lim}, and secondly,  $u$ depends only on $F$ and not on the choice of the converging sequence $\{ F_n,\ n \in \N \}$. 
Indeed, the first claim follows from the fact that $\{ u(F_n),\ n \in \N \}$ is a Cauchy sequence in $L^1(0,T;H_0^1(\Om))$. This can be seen from the estimate $\norm{u(F_{n+k})-u(F_n)}_{L^1(0,T;H^1(\Om))} \le C \e^{CT} \norm{u(F_{n+k})-u(F_n)}_{L^1(0,T;\LL)}$ arising for all $n$ and $k$ in $\N$ by applying \eqref{esti_u-h2} with $p=1$ and $\ka=\f{1}{2}$ to the $L^1(0,T;H_0^1(\Om))$-solution $u(F_{n+k})-u(F_n)$ of the system \eqref{equ-u-distri} associated with $u_0=0$ and $F=F_{n+k}-F_n$. Moreover, if $\{\tilde{F}_n,\ n \in \N \}$ is another $L^\infty(0,T;\LL)$-sequence fulfilling $\lim_{n \to \infty} \tilde{F}_n = F$ in $L^1(0,T;\LL)$, then we have
$\norm{u(F_n)-u(\tilde{F}_n)}_{L^1(0,T;H^1(\Om))} \leq C \e^{CT} \norm{F_n-\tilde{F}_n}_{L^1(0,T;\LL)}$ from
\eqref{esti_u-h2} with $p=1$ and $\ka=\f{1}{2}$, since $u(F_n)-u(\tilde{F}_n)$ is a solution to the IBVP \eqref{equ-u-distri} associated with $u_0=0$ and $F=F_n-\tilde{F}_n$.
As a consequence Theorem \ref{thm-forward} with $T \in (0,+\infty)$ and Theorem \ref{thm-L1} remain valid with $F \in L^1(0,T;\LL)$. In particular, we infer from \eqref{esti_u-h2} that the solution $u$ to \eqref{equ-u-distri} associated with $u_0=0$ and $F\in L^1(0,T;\LL)$, fulfills
\begin{equation}
\label{esti_u-h2-b}
\norm{u}_{L^p(0,T;H^{2\ka}(\Om))} \le C \e^{C T} \norm{F}_{L^1(0,T;\LL)},\ \ka \in [0,1),\ p \in \left[1,\frac{1}{1-\al_0(1-\ka)} \right).
\end{equation}
Similarly, it is apparent that the statement of Theorem \ref{thm-H2} still holds for $T \in (0,+\infty)$ and $F \in L^2(Q)$.

\subsection{Brief comments and outline}
To our best knowledge, the only mathematical paper besides this one, dealing with the existence and uniqueness issues for solutions to DO fractional diffusion equations, is \cite{KR}. But the analysis carried out in \cite{KR} is different from the one of the present paper in many aspects. As already mentioned in Section \ref{sec-intro-distri}, the approach of \cite{KR} is variational whereas we study the original function of the solution to the Laplace transform of \eqref{equ-u-distri}.
This allows us to show existence of a unique weak solution to \eqref{equ-u-distri} within the class $\cC((0,T],\LL)$ (and even $\cC([0,T],\LL)$ if $u_0=0$), whereas the solution exhibited in \cite{KR} lies in $L^2(Q)$. Similarly, it is unclear whether the improved regularity estimates \eqref{esti-u}-\eqref{esti_u-h2} or the time analyticity of the weak solution can be derived from the scheme of \cite{KR}. This being said, we stress out once more that the approach of \cite{KR} applies to non-autonomous systems, which is not the case of the analysis presented in this text.

Finally, we point out that Definition \ref{def-u-distri} of a weak solution to \eqref{equ-u-distri} (as the original function of the solution to the Laplace transform with respect to the time variable of this system) is inspired by the analysis carried out in \cite{KSY}, which is concerned with space dependent variable order (VO) time-fractional diffusion equations. It is well known that the weak solution to CO time-fractional diffusion equations can be expressed in terms of Mittag-Leffler functions, see e.g. \cite{FK,KY15}. Nevertheless, such an explicit representation formula is no longer valid for DO or space-dependent VO time-fractional diffusion equations, as the inversion method of the Laplace transform is technically more involved in these two cases. This specific difficulty arising from the non-constancy of the order of DO or VO time-fractional equations is the main difference with the analysis of their CO counterpart. 

The article is organized as follows. 
In Section \ref{sec-sol} we prove Theorem \ref{thm-forward} by showing existence of a unique weak solution to the IBVP \eqref{equ-u-distri}, enjoying a Duhamel-like representation formula. Section \ref{sec-proofs} contains the proof of Theorems \ref{thm-L1} and \ref{thm-H2}, based on careful analysis of the above mentioned representation of the solution. In Section \ref{sec-an} we establish the time analytic property of the solution to \eqref{equ-u-distri}, claimed in Theorem \ref{thm-an} .
Next,  in Section \ref{sec-lip_to_orders}, we prove Theorem \ref{thm-Lip-coef} stating that the weak solution to 
\eqref{equ-u-distri}
depends continuously on the distributed order weight function, the diffusion coefficients and the electric potential. Finally, in the appendix presented in Section \ref{sec-app}, we collect the proof of an auxiliary result used in the derivation of Theorem \ref{thm-H2}.

\section{Representation of the solution: proof of Theorem \ref{thm-forward}}
\label{sec-sol}
The proof of Theorem \ref{thm-forward} is based on an effective representation of the solution to the IBVP \eqref{equ-u-distri}, that is derived in Section \ref{sec-repsol} and presented in Proposition \ref{pr-euws}. As a preamble, several useful properties of the function $s \mapsto w(s)=\int_0^1 s^{\al-1} \mu(\al) \da$, that are needed by the proofs of Proposition \ref{pr-euws} and Theorems \ref{thm-forward} and \ref{thm-H2}, are collected in the coming section.

\subsection{Three auxiliary results on $w$}

We start by lower bounding $\abs{s w(s)+\la}$ with respect to $\la$, uniformly in $s \in \C \setminus \R_-$ and $\lambda \in (0,+\infty)$, where $\R_-:=(-\infty,0]$.

\begin{lem}
\label{lm0}
Let $\mu \in L^\infty(0,1)$ be non-negative. Then, for all $\la \in (0,+\infty)$ and all $s=r \e^{\pm \i \be}$, where $r \in (0,+\infty)$ and $\be \in [0,\pi)$, we have
\begin{equation}
\label{a0b} 
\abs{s w(s)+\la}\ge C_{\be} \la\ \mbox{with}\ C_{\be}:=  \left\{ \begin{array}{cl} 1 & \mbox{if}\ \be \in \left[0, \f{\pi}{2} \right], \\ \f{\sin \be}{2} & \mbox{if}\ \be \in \left( \f{\pi}{2}, \pi \right). \end{array} \right.
\end{equation}
Moreover, if $\be \in \left( \f\pi2,\pi \right)$, then we have in addition:
\begin{equation}
\label{a0} 
\f{\la^\nu \abs{sw(s)}^{1-\nu}}{\abs{sw(s)+\la}}
\le \f2{\sin \be},\ \nu \in [0,1].
\end{equation}
\end{lem}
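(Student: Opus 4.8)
\textbf{Proof plan for Lemma \ref{lm0}.}

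The plan is to work directly with the explicit form $s w(s) = \int_0^1 s^\al \mu(\al)\,\da$ and exploit the fact that, when $s = r\e^{\pm\i\be}$ with $r>0$ and $\be\in[0,\pi)$, every power $s^\al = r^\al \e^{\pm\i\al\be}$ has argument $\pm\al\be \in (-\be,\be) \subset (-\pi,\pi)$. Hence each contribution $s^\al\mu(\al)$ lies in the closed sector $\{\ze\in\C:\ |\arg \ze|\le\be\}$, and since $\mu\ge 0$, so does the integral $s w(s)$; in particular $\Re(s w(s))\ge 0$ when $\be\le\pi/2$. The strategy is then a case split on whether $\be$ lies in $[0,\pi/2]$ or in $(\pi/2,\pi)$.

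First I would treat $\be\in[0,\pi/2]$. Here $\Re(sw(s))\ge 0$, so $\Re(sw(s)+\la)\ge\la$, whence $|sw(s)+\la|\ge\Re(sw(s)+\la)\ge\la$, giving \eqref{a0b} with $C_\be=1$. For $\be\in(\pi/2,\pi)$ the real part of $sw(s)$ may be negative, so instead I would use the imaginary part together with a geometric observation: writing $sw(s)=\rho\e^{\i\te}$ with $|\te|\le\be$ (this bound on the argument holds because $sw(s)$ is a nonnegative-weight superposition of the $s^\al$, each of argument at most $\be$ in modulus), the distance from $sw(s)$ to the point $-\la$ on the negative real axis is at least the distance from the ray $\{\rho\e^{\i\be}:\rho\ge 0\}$ (or its conjugate) to $-\la$, which is $\la\sin(\pi-\be)=\la\sin\be$. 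To make this rigorous I would just estimate $|sw(s)+\la|\ge|\Im(sw(s)+\la)| = |\Im(sw(s))|$ if $\Re(sw(s))\le 0$, and note $|\Im(sw(s))| = \rho|\sin\te|$ while $|sw(s)+\la|\ge|\Re(sw(s))+\la|$... — more cleanly: if $\Re(sw(s))\ge -\la/2$ then $|sw(s)+\la|\ge\la/2$; if $\Re(sw(s))< -\la/2$ then, since $|\te|\le\be<\pi$, one has $|\tan\te|\le|\tan\be|$ is the wrong monotonicity, so I would instead bound $\rho = |sw(s)|\ge|\Re(sw(s))|>\la/2$ and $|\Im(sw(s))| = \rho|\sin\te|\ge \rho|\sin\be|$ fails too because $|\te|$ could be small. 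The correct elementary route is: the set $\{\ze:|\arg\ze|\le\be\}$ has distance exactly $\la\sin\be$ from $-\la$ (the nearest point being the foot of the perpendicular from $-\la$ onto the boundary ray $\rho\e^{\pm\i\be}$, valid precisely because $\be>\pi/2$ so this foot has $\rho>0$), and $sw(s)$ lies in that set, so $|sw(s)-(-\la)|\ge\la\sin\be$. Taking $C_\be=\tfrac{\sin\be}{2}$ (a weaker constant, which also absorbs the trivial $\be=\pi/2$ boundary case if desired) then yields \eqref{a0b}.

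For \eqref{a0}, fix $\be\in(\pi/2,\pi)$ and $\nu\in[0,1]$. I would use Young's inequality in the form $\la^\nu|sw(s)|^{1-\nu}\le \nu\la + (1-\nu)|sw(s)|$, so that
\[
\f{\la^\nu|sw(s)|^{1-\nu}}{|sw(s)+\la|}\le \nu\,\f{\la}{|sw(s)+\la|} + (1-\nu)\,\f{|sw(s)|}{|sw(s)+\la|}.
\]
The first quotient is bounded by $1/C_\be = 2/\sin\be$ by \eqref{a0b}. For the second, I would again invoke that $sw(s)$ lies in the sector $\{|\arg\ze|\le\be\}$ and that $-\la$ lies on the negative real axis: by the law of sines (or elementary plane geometry) in the triangle with vertices $0$, $sw(s)$, $-\la$, the ratio $\tfrac{|sw(s)|}{|sw(s)+\la|}$ equals $\tfrac{\sin(\angle\text{ at }-\la)}{\sin(\angle\text{ at }sw(s))}$, and since the angle at $sw(s)$ is $\pi-|\arg(sw(s))|\ge\pi-\be$, its sine is at least $\sin\be$ (using $\pi-\be<\pi/2$ and monotonicity), while the numerator sine is at most $1$; hence $\tfrac{|sw(s)|}{|sw(s)+\la|}\le 1/\sin\be\le 2/\sin\be$. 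Combining, the convex combination is bounded by $2/\sin\be$, which is \eqref{a0}.

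The main obstacle is not any single estimate but getting the sector/geometry argument airtight: one must be careful that $\arg(sw(s))$ is genuinely controlled by $\be$ (this is where non-negativity of $\mu$ and $\al\be\le\be$ are both essential — a superposition with nonnegative coefficients of vectors in a convex cone stays in that cone, and $\{|\arg\ze|\le\be\}\cup\{0\}$ is convex since $\be<\pi$), and that the "distance from a sector to an exterior point" computation genuinely requires $\be>\pi/2$ so that the foot of the perpendicular falls on the open ray rather than at the origin. I would state the cone/convexity fact as a short standalone observation before splitting into the two cases, and handle the law-of-sines step by the explicit perpendicular-distance formula to avoid degenerate-triangle subtleties (e.g. when $sw(s)$ is itself real, which happens only if $\mu$ is supported at a single $\al$ with $\al\be\in\pi\Z$, impossible here, or if $s\in\R$, i.e. $\be=0$, already covered).
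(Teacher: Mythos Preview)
Your approach is correct and takes a genuinely different route from the paper's. The paper proceeds analytically: for $\be\in(\pi/2,\pi)$ it splits the integral defining $\Re(sw(s))$ at the index $\al=\pi/(2\be)$ into a nonnegative part $f$ and a part $-g$ with $g\ge 0$, shows $g\le\Im(sw(s))/\sin\be$, and then case-splits on whether $\Im(sw(s))$ exceeds $\la\sin\be/2$; for \eqref{a0} it first proves an intermediate bound with $\Im(sw(s))$ in place of $|sw(s)|$ and upgrades via the same $f$--$g$ decomposition. Your geometric argument is more conceptual: since each $s^\al$ lies in a convex sector of opening $<\pi$, so does $sw(s)$, after which both estimates reduce to elementary distance and Young-inequality considerations. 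Your route even yields the sharper constant $\sin\be$ (rather than $\sin\be/2$) in \eqref{a0b}.

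Two small points to tighten. First, the two-sided sector $\{|\arg\ze|\le\be\}$ is \emph{not} convex when $\be>\pi/2$; what is convex is the one-sided sector $\{0\le\arg\ze\le\be\}$ (or its conjugate), and for a fixed sign in $s=r\e^{\pm\i\be}$ all the $s^\al$ lie in that one-sided sector, so your conclusion survives with this correction. Second, your law-of-sines identification is off: the angle equal to $\pi-|\arg(sw(s))|$ is the angle at the \emph{origin}, not at $sw(s)$, and its sine can be arbitrarily small when $\arg(sw(s))$ is small. The inequality $|sw(s)|/|sw(s)+\la|\le 1/\sin\be$ is nonetheless true: writing $sw(s)=\rho\e^{\i\te}$ with $|\te|\le\be$, one has
\[
|sw(s)+\la|^2-\rho^2\sin^2\be=\rho^2\cos^2\be+2\rho\la\cos\te+\la^2\ge(\rho\cos\be+\la)^2\ge 0,
\]
using $\cos\te\ge\cos\be$. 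This one-line computation replaces the law-of-sines step cleanly and completes your argument.
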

\begin{proof}
We start with \eqref{a0b}.
The case of $\be \in \left[0, \f{\pi}{2} \right]$ is easily treated, as we have
$$ 
\Re( s w(s) ) = \int_0^1 r^\al  \cos(\al \be) \mu(\al) \da \ge 0, 
$$
since $\mu$ is non-negative, and consequently $\Re( s w(s) + \la ) \ge \la$. 
In order to examine the case where $\be \in \left(\f{\pi}{2}, \pi \right)$, we put 
\begin{equation}
\label{def-fg}
f(s):=\int_0^{\f{\pi}{2 \be}} r^\al \cos(\al \be) \mu(\al) \da 
\mbox{ and } 
g(s):= \int_{\f{\pi}{2\be}}^1 r^\al | \cos(\al \be) | \mu(\al) \da, 
\end{equation}
in such a way that we have
\begin{equation}
\label{a1} 
\Re( s w(s) ) = f(s) - g(s),
\end{equation}
with
\begin{equation}
\label{a2}
f(s) \geq 0\ \mbox{and}\ 0\le g(s)\le\f{\Im(sw(s))}{\sin\be}.
\end{equation}
In the last inequality of \eqref{a2}, we used the fact that $\sin ( \al \be) \ge \sin \be >0$ for all $\al \in \left[ \f{\pi}{2\be}, 1 \right]$ in order to write
$g(s) \le \int_{\f{\pi}{2\be}}^1 r^\al  \mu(\al) \da \le \f{\int_{\f{\pi}{2\be}}^1 r^\al \sin ( \al \be) \mu(\al) \da}{\sin \be}$.
Therefore, if $\Im(sw(s)) \le \f{\la \sin\be}2$ then we get from \eqref{a1}-\eqref{a2} that $\Re(sw(s)+\la)\ge\f{\la}2$, which yields \eqref{a0b}.

We turn now to proving \eqref{a0}. To this end, we infer from \eqref{a1}-\eqref{a2} that
\begin{equation}
\label{a3}
\f{\la^\nu \left( \Im (sw(s)) \right)^{1-\nu}}{\abs{sw(s)+\la}}
\le \left( \f2{\sin\be} \right)^\nu,\ \nu\in[0,1].
\end{equation}
Indeed, we have already noticed that for $\Im(sw(s)) \le\f{\la\sin\be}2$ we have $\Re( sw(s) + \la )\ge\f{\la}2$, whence
$$
\abs{s w(s) + \la} 
\ge \sqrt{ \left( \f{\la}2 \right)^2+ (\Im (s w(s) ))^2}
\geq \left( \f{\la}{2} \right)^\nu \left( \Im (s w(s)) \right)^{1-\nu},\ \nu \in [0,1].
$$
On the other hand, if $\Im(s w(s)) > \f{\la \sin \be}2$ then it holds true that
$$
\abs{s w(s) + \la}
\geq \Im ( s w(s) ) \geq \left( \f{\la \sin \be}2 \right)^\nu \left(\Im (s w(s))\right)^{1-\nu},\ \nu \in [0,1].
$$
Having established \eqref{a3}, we are now in position to prove \eqref{a0}. To do that, we refer once more to \eqref{a1}-\eqref{a2} and examine the two cases $f(s) \geq g(s)$ and $f(s) < g(s)$ separately.
We start with $f(s) \geq g(s)$, involving $\Re ( s w(s) ) \geq 0$ in virtue of \eqref{a1}. Thus we get
$$ 
\abs{s w(s) + \la}
\geq \sqrt{ \la^2 + \abs{s w(s)}^2} \geq \la^\nu | s w(s) |^{1-\nu},\ \nu \in [0,1],
$$
since $\lambda \in (0,+\infty)$, which entails \eqref{a0}.
On the other hand, if $f(s) < g(s)$ then it holds true that
$$
\abs{\Re (s w(s))}= g(s) - f(s) \leq g(s) \leq \f{\Im (s w(s))}{\sin \be}, 
$$
from \eqref{a1}-\eqref{a2}. As a consequence we have
$\abs{s w(s)} \le \sqrt{1 + \left( \f1{\sin \be} \right)^2} \Im (sw(s))$, which, combined with \eqref{a3}, yields \eqref{a0}.
\end{proof}

The second result provides for all $s \in \C \setminus \R_-$ and all $\lambda \in (0,+\infty)$, a suitable lower bound on $\abs{s w(s)+\la}$, expressed in terms of $\abs{s}$.

\begin{lem}
\label{lem-sw>s}
Let $\mu \in L^\infty(0,1;\R_+)$ fulfill \eqref{cnd-mu1}. Then, there exists a constant $C$, depending only on $\de$, $\al_0$ and $\mu$, such that we have
\begin{equation}
\label{ess1} 
\abs{s w(s)+\la} \ge C \min(\abs{s}^{\al_0-\de},\abs{s}^{\al_0}),\ \ s\in\C \setminus \R_-,\ \la \in (0,+\infty).
\end{equation}
\end{lem}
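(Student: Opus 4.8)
The plan is to obtain the lower bound by first handling $s$ with large argument (where the real part of $sw(s)$ may be very negative) and then the remaining sector, combining these with the trivial estimate coming from Lemma~\ref{lm0}. Write $s = r\e^{\pm\i\be}$ with $r=\abs{s}\in(0,+\infty)$ and $\be\in[0,\pi)$. For $\be\in\left[0,\f\pi2\right]$, Lemma~\ref{lm0} already gives $\abs{sw(s)+\la}\ge\la\ge c_a$, which is certainly bounded below; but since here we want a bound in terms of $\abs{s}$ rather than $\la$, the real work is to produce a term that grows (or decays) like a power of $r$. The natural idea is to restrict the defining integral $w(s)=\int_0^1 s^{\al-1}\mu(\al)\,\da$ to the subinterval $(\al_0-\de,\al_0)$ on which \eqref{cnd-mu1} guarantees $\mu(\al)\ge\mu(\al_0)/2>0$, and to exploit that on this subinterval $\cos(\al\be)$ and $\sin(\al\be)$ cannot both be small.

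More precisely, I would argue as follows. Since $\al\be$ ranges over an interval of length $\de\be<\pi$ as $\al$ runs over $(\al_0-\de,\al_0)$, one can find a subinterval $I\subset(\al_0-\de,\al_0)$ of length proportional to $\de$ on which either $\abs{\cos(\al\be)}\ge c$ or $\abs{\sin(\al\be)}\ge c$ holds, for an absolute constant $c>0$ depending only on $\de$ (indeed, on any interval of length $<\pi$ the functions $\cos$ and $\sin$ cannot both be uniformly small; one can even take $I$ so that a fixed trigonometric quantity is bounded below). On that subinterval, using $\mu\ge\mu(\al_0)/2$, the corresponding contribution to $\Re(sw(s))$ or $\Im(sw(s))$ is, in absolute value, at least $C\int_I r^\al\,\da$. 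Since $r^\al$ is monotone in $\al$, $\int_I r^\al\,\da\ge \abs{I}\min(r^{\al_0-\de},r^{\al_0})\ge C'\min(r^{\al_0-\de},r^{\al_0})$. Hence $\abs{sw(s)}\ge\max(\abs{\Re(sw(s))},\abs{\Im(sw(s))})$ is bounded below by $C\min(\abs{s}^{\al_0-\de},\abs{s}^{\al_0})$ \emph{whenever} the dominant part of $sw(s)$ comes from a region where no cancellation occurs.

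The subtlety, and what I expect to be the main obstacle, is cancellation: for $\be\in\left(\f\pi2,\pi\right)$ the integrand $s^{\al-1}$ has a real part $r^{\al-1}\cos((\al-1)\be)$ that changes sign in $\al$, so a lower bound on one piece of the integral does not immediately survive in $\abs{sw(s)+\la}$, and moreover the additional $+\la$ could conceivably cancel $\Re(sw(s))$. To circumvent this I would split according to $\be$. For $\be\in\left[0,\f\pi2\right]$ there is no sign problem: $\Re(s^{\al}) = r^\al\cos(\al\be)\ge 0$ for all $\al\in[0,1]$, so $\Re(sw(s))\ge\int_I r^\al\cos(\al\be)\mu(\al)\,\da\ge C\min(r^{\al_0-\de},r^{\al_0})$ on the subinterval where $\cos(\al\be)$ is bounded below, giving \eqref{ess1} directly since $\la>0$ only helps. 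For $\be\in\left(\f\pi2,\pi\right)$ I would use the decomposition $\Re(sw(s))=f(s)-g(s)$ from \eqref{a1}--\eqref{a2} in the proof of Lemma~\ref{lm0}, together with the bound $0\le g(s)\le\Im(sw(s))/\sin\be$: if $\Im(sw(s))\ge C\min(r^{\al_0-\de},r^{\al_0})$ we are done immediately because $\abs{sw(s)+\la}\ge\Im(sw(s))$; otherwise $g(s)$ is small, so $\Re(sw(s))\approx f(s)\ge0$, and then $\abs{sw(s)+\la}\ge\Re(sw(s))+\la\ge f(s)$, and $f(s)$ — the integral of $r^\al\cos(\al\be)\mu(\al)$ over $\al\in(0,\f{\pi}{2\be})$, a range where the cosine is nonnegative — can be bounded below by restricting to $I\cap(0,\f{\pi}{2\be})$ (nonempty since $\f{\pi}{2\be}>\f12\ge\al_0-\de$ may fail, so one must instead pick the subinterval $I$ inside $(\al_0-\de,\al_0)\cap(0,\pi/(2\be))$ when that intersection is substantial, and otherwise $\be$ is close to $\pi$ and one uses the $\Im$ bound). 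Tracking these cases carefully and checking that the constant depends only on $\de,\al_0,\mu$ (uniformly down to $\be\to\pi^-$, where $\sin\be\to0$ but the imaginary-part argument degrades gracefully because then $\al\be$ stays bounded away from multiples of $\pi$ on $I$) is the bookkeeping-heavy heart of the proof; everything else is the elementary monotonicity estimate $\int_I r^\al\,\da\ge\abs{I}\min(r^{\al_0-\de},r^{\al_0})$.
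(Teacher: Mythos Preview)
Your overall strategy---split by the argument $\be$ of $s$, use the real part for small $\be$ and the imaginary part for large $\be$, and restrict the integral to $(\al_0-\de,\al_0)$ where \eqref{cnd-mu1} gives $\mu\ge\mu(\al_0)/2$---is exactly what the paper does. The $\be\in[0,\pi/2]$ case is handled as you describe: $\cos(\al\be)\ge\cos(\al_0\pi/2)>0$ on the whole subinterval, so no further refinement to a smaller $I$ is needed.

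Where you overcomplicate things is in the sector $\be\in(\pi/2,\pi)$. You invoke the $f-g$ decomposition from Lemma~\ref{lm0}, worry about cancellation in the real part, and set up a dichotomy on the size of $\Im(sw(s))$. But the ``otherwise'' branch of your dichotomy never occurs: for every $\al\in[0,1]$ and $\be\in(\pi/2,\pi)$ one has $\al\be\in[0,\pi)$, hence $\sin(\al\be)\ge0$. Thus the integrand in
\[
\Im\bigl(sw(s)\bigr)=\int_0^1 r^\al\sin(\al\be)\,\mu(\al)\,\da
\]
is nonnegative throughout, there is no cancellation at all, and restricting to $(\al_0-\de,\al_0)$ directly gives
\[
\abs{sw(s)+\la}\ge\abs{\Im(sw(s))}\ge\frac{\mu(\al_0)}{2}\int_{\al_0-\de}^{\al_0} r^\al\sin(\al\be)\,\da\ge\frac{\de\,\mu(\al_0)}{2}\min\!\bigl(\sin\tfrac{(\al_0-\de)\pi}{2},\sin(\al_0\pi)\bigr)\min(r^{\al_0-\de},r^{\al_0}),
\]
which is the paper's two-line argument. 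Your concern about $\sin\be\to0$ as $\be\to\pi^-$ is also moot: the relevant trigonometric factor is $\sin(\al\be)$ with $\al<1$, which stays bounded away from zero uniformly on $(\al_0-\de,\al_0)\times(\pi/2,\pi)$. So the ``bookkeeping-heavy heart'' you anticipate, with subintervals $I\cap(0,\pi/(2\be))$ and degenerate cases near $\be=\pi$, simply evaporates once you notice the sign of $\sin(\al\be)$.
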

\begin{proof}
Let us first consider the case where $s=re^{\i\be}$ with $r\in(0,+\infty)$ and $\be \in \left[ -\f{\pi}2,\f{\pi}2 \right]$. Since
$\cos(\al\be)\ge0$ for every $\al\in[0,1]$, then we have
$$
\abs{sw(s)+\la}
\ge \int_0^1 r^\al\cos(\al\be)\mu(\al)\da + \la \ge \int_{\al_0-\de}^{\al_0}r^\al \cos(\al\be)\mu(\al)\da, $$
in virtue of \eqref{cnd-mu1}, so we obtain \eqref{ess1} with $C:=\f{\de\mu(\al_0)}{2}\cos \left(\f{\al_0\pi}2\right) $. 
Similarly, if $\abs{\be} \in \left(\f{\pi}2,\pi \right)$ we infer from \eqref{cnd-mu1} that
$$
\abs{sw(s)+\la}
\ge \int_0^1r^\al \abs{\sin(\al \be)} \mu(\al)\da
\ge \int_{\al_0-\de}^{\al_0}r^\al \abs{\sin(\al \be)} \mu(\al)\da,
$$
which yields \eqref{ess1} with $C:=\f{\de \mu(\al_0)}{2} \min \left(\sin \left( \f{(\al_0-\de)\pi}2 \right), \sin(\al_0 \pi)\right)$.
\end{proof}

Finally, we estimate from above the mapping $s \mapsto \abs{w(s)}$ (resp., $s \mapsto \abs{s w(s)}$) by a suitable continuous monotically decreasing (resp., increasing) function of $\abs{s}$.

\begin{lem}
\label{lm-te}
Assume that $\mu \in L^{\infty}(0,1)$ is non-negative.
Then, for all $s \in \C \setminus \R_-$, we have
\begin{equation}
\label{y1}
\abs{s w(s)} \le \| \mu \|_{L^\infty(0,1)} \zeta(\abs{s})\ \mbox{and}\  \abs{w(s)} \le \| \mu \|_{L^\infty(0,1)} \vartheta(\abs{s}),
\end{equation}
where:
\begin{enumerate}[(a)]
\item The mapping
$$ 
\zeta: r \mapsto 
\left\{ 
\begin{array}{cl} 
\f{r-1}{\log r} & \mbox{if}\ r \in (0,1) \cup (1,+\infty), 
\\ 
1 & \mbox{if}\ r=1, 
\end{array} 
\right. 
$$
is continuous and monotonically increasing on $(0,+\infty)$;
\item The function $\vartheta(r)= \f{\zeta(r)}{r}$ is continuous and monotonically decreasing on $(0,+\infty)$.
\end{enumerate}
\end{lem}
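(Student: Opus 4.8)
The plan is to reduce the two estimates in \eqref{y1} to elementary scalar integrals, and then to obtain the monotonicity and continuity of $\zeta$ and $\vartheta$ by recognizing them as exactly those integrals. First I would fix $s \in \C \setminus \R_-$, write $s = \abs{s}\e^{\i \be}$ with $\be \in (-\pi,\pi)$, and observe that, $s^\al$ being defined through the principal branch of the logarithm, one has $s^\al = \abs{s}^\al \e^{\i\al\be}$ and hence $\abs{s^\al} = \abs{s}^\al$ for every $\al \in [0,1]$. Using the non-negativity of $\mu$ and moving the modulus inside the integral then gives $\abs{s w(s)} \le \norm{\mu}_{L^\infty(0,1)} \int_0^1 \abs{s}^\al \da$ and $\abs{w(s)} \le \norm{\mu}_{L^\infty(0,1)} \int_0^1 \abs{s}^{\al-1}\da$.

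Next I would evaluate these integrals in closed form: $\int_0^1 r^\al \da = (r-1)/\log r$ for $r \ne 1$ and $=1$ for $r=1$, which is precisely $\zeta(r)$; similarly $\int_0^1 r^{\al-1}\da = \zeta(r)/r = \vartheta(r)$, the identity between the two expressions following by factoring out $r^{-1}$ (or substituting $\al \mapsto 1-\al$). This already yields \eqref{y1}, and in addition it provides the integral representations $\zeta(r) = \int_0^1 r^\al\,\da$ and $\vartheta(r) = \int_0^1 r^{\al-1}\,\da$ on $(0,+\infty)$, which I would use to prove parts (a) and (b).

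For (a) and (b) I would argue monotonicity and continuity directly from these representations: for each fixed $\al \in [0,1]$ the map $r \mapsto r^\al$ is continuous and non-decreasing on $(0,+\infty)$ (strictly increasing when $\al \in (0,1)$) and uniformly bounded for $r$ in a neighbourhood of any given point, so integrating over $\al \in [0,1]$ and invoking dominated convergence gives that $\zeta$ is continuous and monotonically increasing; the same reasoning applied to $r \mapsto r^{\al-1}$, which is continuous and non-increasing, shows that $\vartheta$ is continuous and monotonically decreasing. (One could instead differentiate $\zeta$ and check that the numerator of $\zeta'$ is non-negative via a short one-variable argument, but the integral representation is cleaner and in particular handles the endpoint $r=1$ automatically.)

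I do not expect a genuine obstacle here; the only points needing a little care are the justification of $\abs{s^\al} = \abs{s}^\al$ on $\C \setminus \R_-$ (where the principal branch is holomorphic), and the continuity of $\zeta$ and $\vartheta$ at $r=1$, where the closed-form expressions are a $0/0$ indeterminate — both are dispatched at once by the integral representation, whose integrand is bounded uniformly for $r$ near $1$.
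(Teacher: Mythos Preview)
Your argument is correct. The derivation of \eqref{y1} via $\abs{s^\al}=\abs{s}^\al$ and the bound $\abs{sw(s)}\le\norm{\mu}_{L^\infty(0,1)}\int_0^1\abs{s}^\al\,\da$ is exactly what the paper does. Where you diverge is in parts (a) and (b): the paper differentiates the closed form of $\zeta$, writes $\zeta'(r)=h(r)/\bigl(r(\log r)^2\bigr)$ with $h(r)=r\log r-r+1$, and checks $h\ge0$ via $h'(r)=\log r$ and $h(1)=0$; it then obtains (b) in one line from the symmetry $\vartheta(r)=\zeta(1/r)$. Your route instead keeps the integral representations $\zeta(r)=\int_0^1 r^\al\,\da$ and $\vartheta(r)=\int_0^1 r^{\al-1}\,\da$ and reads off monotonicity and continuity directly from the integrand. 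Your approach is slightly cleaner in that it handles the point $r=1$ without a separate limit computation, while the paper's symmetry observation $\vartheta(r)=\zeta(1/r)$ is a nice shortcut that makes (b) immediate once (a) is done. Both are short and complete; you even flag the differentiation alternative yourself.
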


\begin{proof}
Since $\abs{s w(s)} \le \int_0^1 \abs{s}^\al \mu(\al) \da \le \| \mu \|_{L^\infty(0,1)}\int_0^1 \abs{s}^\al \da$
from the very definition of $w$, we get \eqref{y1}.
Next, for all  $r \in (0,1) \cup (1,+\infty)$, it holds true that
$\zeta'(r) = \f{h(r)}{r ( \log r )^2}$
with $h(r):= r \log r - r +1$. Further, since $h'(r)=\log r$, we have $h(r) \geq h(1)=0$ for all $r \in (0,1)$, giving $\zeta'(r) \geq 0$ and hence Statement (a). Finally, Statement (b) follows from the identity $\vartheta(r)= \zeta \left( \f{1}{r} \right)$ for all $r \in (0,+\infty)$.
\end{proof}

Armed with the three above lemmas, we may now turn to showing that the IBVP \eqref{equ-u-distri} admits a unique solution enjoying a Duhamel representation formula.

\subsection{A representation formula}
\label{sec-repsol}
For $\ve \in (0,+\infty)$ and $\te \in \left( \f{\pi}{2}, \pi \right)$, we introduce the following contour in $\C$,
\begin{equation}
\label{def-ga} 
\ga(\ve,\te) := \ga_-(\ve,\te) \cup \ga_c(\ve,\te) \cup \ga_+(\ve,\te),
\end{equation}
where
\begin{equation}
\label{def-ga-p} 
\ga_\pm(\ve,\te) 
:= \{ s\in\C,\ \arg s = \pm\te,\ \abs{s} \ge \ve\}\ \mbox{and}\
\ga_c(\ve,\te) := \{ s\in\C,\ \abs{\arg s}\le \te,\ \abs{s}= \ve\}.
\end{equation}
It will prove to be useful for describing the weak solution to the IBVP \eqref{equ-u-distri}, which is the purpose of the following
result.

\begin{prop}
\label{pr-euws}
Let $\mu$, $T$, $u_0$ and $F$ be the same as in Theorem \ref{thm-forward}.
Then,  there exists a unique weak solution  
\begin{equation}
\label{sol-u-distri}
u(t) = S_0(t)u_0 + \int_0^t S_1(t-\tau) F(\tau) \d\tau,\ t \in [0,T],
\end{equation}
to the IBVP  \eqref{equ-u-distri}, where we have set
\begin{equation}
\label{def-S0}
S_0(t) \psi :=
\f1{2\pi\i} \sum_{n=1}^{+\infty} \left( \int_{\ga(\ve,\te)}\f{w(s)}{sw(s)+\la_n}\e^{st}\ds \right)
\langle \psi,\vp_n \rangle_\LL \vp_n,
\end{equation}
and
\begin{equation}
\label{def-S1}
S_1(t) \psi :=
\f1{2\pi\i} \sum_{n=1}^{+\infty} \left( \int_{\ga(\ve,\te)}\f{1}{sw(s)+\la_n}\e^{st}\ds \right)
\langle \psi,\vp_n \rangle_\LL \vp_n,
\end{equation}
for all $\psi \in \LL$, the two above integrals being independent of the choice of $\ve \in (0,+\infty)$ and $\te \in \left( \f{\pi}{2}, \pi \right)$.
\end{prop}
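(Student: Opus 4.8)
The plan is to verify directly that the $u$ defined by \eqref{sol-u-distri}--\eqref{def-S1} is a weak solution in the sense of Definition \ref{def-u-distri}, and then to argue uniqueness separately. First I would fix $\ve \in (0,+\infty)$ and $\te \in (\f\pi2,\pi)$ and check that the contour integrals defining $S_0(t)\psi$ and $S_1(t)\psi$ converge. On the two rays $\ga_\pm(\ve,\te)$ we have $\abs{\e^{st}} = \e^{\Re(s) t} = \e^{t \abs{s} \cos \te}$ with $\cos\te < 0$, so the exponential provides decay in $\abs{s}$; combined with the lower bound $\abs{s w(s) + \la_n} \ge C_\te \la_n$ from \eqref{a0b} of Lemma \ref{lm0} (with $\be = \te$) and the upper bound $\abs{w(s)} \le \|\mu\|_\infty \vartheta(\abs{s})$ from Lemma \ref{lm-te}, the integrand of \eqref{def-S0} is dominated by $C \la_n^{-1} \vartheta(\abs{s}) \e^{t\abs{s}\cos\te}$, which is integrable over $\ga_\pm$ for $t>0$ (and the arc $\ga_c$ is compact). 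The resulting bound $\sup_n \abs{\int_{\ga(\ve,\te)} \frac{w(s)}{sw(s)+\la_n}\e^{st}\ds} \le C(t)$ uniformly in $n$ then shows, by Parseval, that $S_0(t)\psi \in \LL$ with $\|S_0(t)\psi\|_\LL \le C(t)\|\psi\|_\LL$; similarly for $S_1$, where one instead uses $\abs{sw(s)+\la_n} \ge C_\te \la_n$ together with Lemma \ref{lem-sw>s} (which gives extra decay in $\abs{s}$ near the arc) to control the integral. Cauchy's theorem, applied to the holomorphic integrand $s \mapsto \frac{w(s)}{sw(s)+\la_n}\e^{st}$ on $\C\setminus\R_-$ (note $sw(s)+\la_n$ never vanishes there by \eqref{a0b}), shows the integrals do not depend on $\ve$ and $\te$.

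Next I would compute the Laplace transform and verify \eqref{eq1}. The key identity is that for fixed $p \in \C_+$ and each $n$,
\[
\frac{1}{2\pi\i}\int_{\ga(\ve,\te)} \frac{w(s)}{sw(s)+\la_n}\,\frac{1}{p-s}\,\ds = \frac{w(p)}{pw(p)+\la_n},
\]
obtained by deforming $\ga(\ve,\te)$ so that it encloses the simple pole at $s=p$ (legitimate once $\Re p > 0$, choosing $\ve$ small and $\te$ close to $\pi$, and using the decay established above to discard the arcs at infinity) and applying the residue theorem. This is exactly the statement that $\cL[S_0(\cdot)\vp_n](p) = \frac{w(p)}{pw(p)+\la_n}\vp_n$, i.e. $(A + pw(p))\cL[S_0(\cdot)u_0](p) = w(p) u_0$ after summing against $\langle u_0,\vp_n\rangle_\LL$. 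For the Duhamel term, one uses that the Laplace transform turns the convolution $\int_0^t S_1(t-\tau)F(\tau)\,\d\tau$ into the product $\cL[S_1](p)\wh F(p)$, together with the analogous residue computation $\cL[S_1(\cdot)\vp_n](p) = \frac{1}{pw(p)+\la_n}\vp_n$, giving $(A+pw(p))\cL[\text{Duhamel}](p) = \wh F(p)$. Adding the two contributions yields precisely \eqref{eq1} with $s$ renamed to $p$. Along the way I would also check that the tempered-distribution bookkeeping of Definition \ref{def-u-distri} is respected: the growth bounds on $S_0(t),S_1(t)$ in $t$ (polynomial, using Lemma \ref{lm-te} and the hypothesis on $F$) guarantee that $v$, extended by zero to $t<0$, lies in $\cS'(\R_+,\LL)$, and that the classical Laplace transform computed above coincides with $\cL[v]$ in the distributional sense.

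For uniqueness, suppose $u_1, u_2$ are two weak solutions; then $v := v_1 - v_2 \in \cS'(\R_+,\LL)$ satisfies $(A + sw(s))\wh v(s) = 0$ for all $s \in \C_+$. Since $A + sw(s)$ is boundedly invertible on $\LL$ for every $s \in \C_+$ — indeed $A$ is self-adjoint nonnegative with $A \ge c_a > 0$, and $\Re(sw(s)) \ge 0$ by the non-negativity of $\mu$, so $\langle (A+sw(s))\phi,\phi\rangle_\LL$ has real part $\ge c_a\|\phi\|_\LL^2$ — we get $\wh v(s) = 0$ for all $s \in \C_+$. By injectivity of the Laplace transform on $\cS'(\R_+,\LL)$ (which is the content of the uniqueness part of the theory recalled from \cite{KSY}), this forces $v = 0$, hence $u_1 = u_2$. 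Finally, the continuity statements $u \in \cC((0,T],\LL)$, and $\cC([0,T],\LL)$ when $u_0 = 0$, follow from dominated convergence applied to the contour integrals: the dominating functions constructed above are locally uniform in $t$ on $(0,T]$, and the Duhamel term is continuous up to $t=0$ because $\|S_1(\tau)\|$ is integrable near $\tau = 0$ (here one needs $\int_0^1 \|S_1(\tau)\|\,\d\tau < \infty$, which comes from Lemma \ref{lem-sw>s}: near $\tau=0$ the relevant $\ga_c$-contribution is controlled by $\ve^{\al_0}$-type factors after optimizing $\ve \sim 1/\tau$).

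I expect the main obstacle to be the rigorous justification of the contour deformations and the interchange of summation, integration, and the Laplace-transform pairing — in particular, showing the series $\sum_n (\cdots)\langle\psi,\vp_n\rangle_\LL\vp_n$ converges in $\LL$ \emph{uniformly} enough on compact $t$-intervals that $\cL[v]$ may be computed termwise, and that the decay needed to push the arc $\ga_c(\ve,\te)$ to the pole at $s=p$ and the rays to infinity is genuinely uniform in $n$. The estimate \eqref{a0} of Lemma \ref{lm0}, with a judicious choice of $\nu$, is what makes the $n$-uniformity work, so the bulk of the technical effort is in assembling the correct weighted bounds on the integrands from the three auxiliary lemmas rather than in any single slick argument.
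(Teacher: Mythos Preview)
Your approach is correct and reaches the same conclusion, but it proceeds quite differently from the paper. The paper does not verify the representation formula by computing its Laplace transform; instead, for the homogeneous part it introduces an auxiliary kernel $Y(s)u_0 := \sum_n \frac{w(s)}{s^{2}(sw(s)+\la_n)}\langle u_0,\vp_n\rangle_\LL\,\vp_n$ carrying an extra factor $s^{-2}$, so that $\sup_{r\ge 2}\|Y(r+\i\cdot)u_0\|_{L^k(\R,\LL)}<\infty$ for $k=1,2$. This allows the authors to define $y(t)$ as an absolutely convergent inverse Laplace integral along a vertical line, identify $\cL[y]=Y$ by a Paley--Wiener type argument, deform the vertical line to $\ga(\ve,\te)$ via Cauchy's theorem, and only then set $u:=\pa_t^2 y$, which automatically lies in $\cS'(\R_+,\LL)$. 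The inhomogeneous part is treated analogously through $\Phi(s):=s^{-1}(A+sw(s))^{-1}$ and $v:=\pa_t(\phi*\wt F)$, where a separate computation is needed to show that the residual operator $\frac{1}{2\pi\i}\int_{\ga(\ve,\te)}\Phi(s)\,\ds$ vanishes.

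Your route --- compute $\cL[S_j(\cdot)\vp_n](p)$ termwise via Fubini and a residue at $s=p$, then assemble --- is more direct and sidesteps both the auxiliary primitives and the vanishing-residual step. One caution: the Fubini swap for $S_0$ is more delicate than your sketch suggests. With only the bound $|sw(s)+\la_n|\ge C_\te\la_n$ from \eqref{a0b}, the integrand after the $t$-integration behaves like $|w(s)|/(\la_n|p-s|)\sim 1/(|s|\log|s|)$ on the rays, which is \emph{not} integrable; you must invoke Lemma~\ref{lem-sw>s} (or equivalently \eqref{a0} with $\nu=0$ together with a lower bound on $|sw(s)|$) to obtain the integrable decay $O(|s|^{-1-\al_0+\de})$. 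The paper's extra power of $s^{-2}$ is exactly what makes the vertical-line integrals absolutely convergent without this refinement, at the cost of the two-step differentiate-afterwards construction.
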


\begin{proof}
The proof is divided into two steps, the first one being concerned with the case of a uniformly vanishing source term $F$, whereas the second one deals with an identically zero initial state $u_0$.\\ 

\noindent 1) {\it Step 1: $F=0$}. In order to characterize $S_0$ we assume that $F=0$ and put
\begin{equation}
\label{def-Y}
Y(s) u_0
:=\sum_{n=1}^\infty \f{w(s)}{s^2(sw(s)+\la_n)} \langle u_0,\vp_n\rangle_\LL \vp_n,\ s\in\C \setminus\R_-. 
\end{equation}
For all $r \in [2,+\infty)$ and $\eta \in \R$, it is clear from \eqref{a0b} and the estimate
$\abs{w(r+\i \eta)} \le \f{\| \mu \|_{L^\infty(0,1)} }{\log r}$, arising from the second inequality of \eqref{y1} and Statement (b) in Lemma \ref{lm-te}, that the inequality
\begin{equation}
\label{ess}
\abs{\langle Y(r+\i\eta)u_0,\vp_n\rangle_\LL}
\le  c \f{\abs{\langle u_0,\vp_n\rangle_\LL}}{\la_n(r^2+\eta^2)}
\le c \f{\abs{\langle u_0,\vp_n\rangle_\LL}}{\la_n(4+\eta^2)},\  n \in\N,
\end{equation}
holds with $c:=\f{\| \mu \|_{L^\infty(0,1)} }{\log 2}$. Therefore we have
\begin{equation}
\label{sup}
\sup_{r \in [2,+\infty)} \norm{Y(r+\i \cdot)u_0}_{L^k(\R,\LL)}  < +\infty,\ k=1,2.
\end{equation}
From \eqref{sup} with $k=1$, it then follows that the function
$$ 
\tilde{y}(t) := \f1{2\pi\i} \int_{-\i\infty}^{+\i\infty} \e^{t s} Y(s+2)u_0 \d p 
= \f1{2 \pi} \int_{-\infty}^{+\infty} \e^{\i t\eta} Y(2 + \i \eta )u_0 \d\eta,
$$
is well defined for each $t \in\R$. Moreover, since the mapping $s \mapsto \e^{t s} Y(s+2) u_0$ is holomorphic in $\C \setminus (-\infty,-2]$, we have
\begin{equation}
\label{bm1} 
\tilde{y}(t)= \f1{2\pi\i} \int_{\rho-\i\infty}^{\rho+\i\infty} \e^{t s} Y(s+2)u_0 \ds,\ \rho \in(0,+\infty).
\end{equation}
Indeed, for all $R \in (1,+\infty)$ and all $\rho \in (0,+\infty)$, the Cauchy formula yields
\begin{equation}
\label{cau}  
\int_{-\i R}^{\i R} \e^{t s} Y(s+2)u_0 \ds = \int_{\rho-\i R}^{\rho+\i R} \e^{t s} Y(s+2)u_0 \ds 
- \int_{I_{R,\rho}^+ \cup I_{R,\rho}^-} \e^{t s} Y(s+2)u_0 \ds, 
\end{equation}
with $I_{R,\rho}^+:=[\i R,\i R+\rho]$ and $I_{R,\rho}^-:= [-\i R+\rho,-\i R]$, and we know from \eqref{ess} that
$$
\norm{\int_{I_{R,\rho}^\pm} \e^{t s} Y(s+2)u_0 ds}_\LL 
\le \norm{\int_{0}^\rho \e^{t (\tau \pm \i R)} Y(\tau+2 \pm \i R)u_0 \d\tau}_\LL
\le C \rho \e^{\rho t}|2 \pm\i R|^{-2}\norm{u_0}_\LL,
$$
so we find \eqref{bm1} by sending $R$ to infinity in \eqref{cau}. 

Further, with reference to \eqref{sup} with $k=1$, we infer from \eqref{bm1} that 
$$ 
\norm{\tilde{y}(t)}_\LL 
=\f1{2 \pi} \norm{ \int_{-\infty}^{+\infty} \e^{t (\rho+ \i\eta)} Y(\rho+2 +\i \eta )u_0 \d\eta}_\LL
\le \f{\e^{\rho t}}{2\pi} \sup_{r \in [2,+\infty)} \norm{ Y(r+\i\cdot)u_0}_{L^1(\R,\LL) },
$$
uniformly in $t\in\R$ and $\rho \in (0,+\infty)$. Then, depending on whether $t \in (-\infty,0)$ or $t \in [0,+\infty)$, we send $\rho$ to either infinity or to zero in the right hand side of the above estimate, and get:
\begin{equation}
\label{ww}
\norm{\tilde{y}}_{L^\infty(\R,\LL)}
\leq\f1{2 \pi} \sup_{r\in [2,+\infty)} \norm{Y(r+\i\cdot)u_0}_{L^1(\R,\LL)}\ \mbox{and}\ \tilde{y}(t)=0\ \mbox{for all}\ t\in(-\infty,0).
\end{equation}
As a consequence we have $\tilde{y}\in L^\infty(\R,\LL)\cap \cS'(\R_+,\LL)$. 

Moreover, in light of \eqref{sup}, we infer from \cite[Theorem 19.2 and the following remark]{Ru} and the holomorphicity of $s \mapsto Y(s+2)$ in $\C_+$, that $\cL[\tilde{y}](s)=Y(s+2)$ for all $s \in \C_+$. Therefore, putting
$$
y(t):= e^{2t}\tilde{y}(t)=\f{\e^{2t}}{2\pi\i} \int_{-\i\infty}^{+\i\infty} \e^{t p} Y(s+2)u_0 \ds=\f1{2\pi\i} \int_{2-\i\infty}^{2+\i\infty} \e^{st} Y(s)u_0 \ds,\ t \in \R, 
$$
we find that
\begin{equation}
\label{lap1}
\cL[y](s)=\cL[\tilde{y}](s-2)=Y(s),\ s\in\{z\in\C;\ \Re z \in (2,+\infty)\}.
\end{equation}
Further, since the mapping $s \mapsto e^{t s} Y(s)a$ is holomorphic in $\C \setminus \R_-$, the Cauchy formula yields 
\begin{equation}
\label{t1b}
y(t)=\f1{2\pi\i} \int_{\ga(\ve,\te)} \e^{t s} Y(s)u_0 \ds,\ t \in \R_+,
\end{equation}
for all $\te \in \left( \f{\pi} 2,\pi \right)$ and all $\ve \in (0,1)$, the contour $\ga(\ve,\te)$ being defined by \eqref{def-ga}-\eqref{def-ga-p}.
Here, we used the fact that
$$
\lim_{\eta \to +\infty} \int_{\eta ( (\tan \te)^{-1} \pm \i )}^{2\pm \i\eta} \e^{t s} Y(s)u_0 \ds =0\ \mbox{in}\ \LL,\ t \in \R_+.
$$ 
This can be easily deduced from the following basic estimate, arising from Lemma \ref{lm0}, the second inequality of \eqref{y1} and the second statement of Lemma \ref{lm-te}, 
\beas
\norm{\int_{\eta ( (\tan \te)^{-1} \pm \i )}^{2 \pm \i \eta} \e^{t s} Y(s)u_0 \ds}_\LL & =  &
\norm{ \int_{\eta (\tan \te)^{-1}}^2 \e^{t (2 \pm \i \eta)} Y(r \pm \i \eta)u_0 \dr }_\LL 
\nonumber\\
& \leq & C \e^{2 t} (2 - \eta (\tan \te)^{-1}) \eta^{-2}\|u_0\|_\LL,\ \eta \in (1,+\infty),
\eeas
where $C$ is positive constant depending only on $\te$ and $\mu$.
We turn now to examining the right hand side of \eqref{t1b}. To this purpose, we write for all $t \in \R_+$,
$$ 
y(t) =\sum_{\ell \in \{ c, \pm \}} y_\ell(t)\ \mbox{with}\ y_\ell(t)
:= \f{1}{2\pi\i} \int_{\ga_\ell(\ve,\te)} \e^{t s} Y(s)u_0 \ds\ \mbox{for}\ \ell \in \{ c,\pm \},
$$
and we infer from \eqref{a0b}, the second inequality of \eqref{y1} and Statement (b) in Lemma \ref{lm-te}, that
\begin{equation}
\label{vpm1}
\norm{y_{\pm}(t)}_\LL \le  \f{\| \mu \|_{L^\infty(0,1)}}{\pi \la_1 \sin \te} \norm{u_0}_\LL \vartheta(\ve) \int_{\ve}^{+\infty} r^{-2} 
\e^{r (\cos \te) t} \d r 
\end{equation}
and
\begin{equation}
\label{vc1}
\norm{y_c(t)}_\LL
\le \f{ \| \mu \|_{L^\infty(0,1)} (2 \te)^{\f{1}{2}}}{\pi \la_1 \sin \te}  \norm{u_0}_\LL \ve^{-\f{3}{2}} \vartheta(\ve)
\e^{\ve t}.
\end{equation}
For $t \in [0,\e]$, we take $\ve=\e^{-1}$ in \eqref{vpm1}-\eqref{vc1} and get
$\norm{y_{\pm}(t)}_\LL + \norm{y_c(t)}_\LL \leq c(\te,\mu) \norm{u_0}_{L^2(\Omega)}$, where $c(\te,\mu)$ denotes a generic positive constant depending only on $\te $ and $\mu$. Similarly, 
for $t \in (\e,+\infty)$, we choose $\ve=t^{-1}$, take into account that $\vartheta(t^{-1})\le t$, and obtain $\norm{y_{\pm}(t)}_\LL + \norm{y_c(t)}_\LL \leq c(\te,\mu) \norm{u_0}_{L^2(\Omega)} t^{\f{5}{2}}$. Thus, setting $\langle t \rangle:=(1+t^2)^{\f{1}{2}}$, 
we find that
$$ 
\norm{y(t)}_\LL 
\le c(\te,\mu) \|u_0\|_\LL \langle t \rangle^{\f{5}{2}},\ t\in\R_+,
$$
which entails $t \mapsto \langle t \rangle^{-\f{5}{2}} y(t) \in L^\infty(\R_+,\LL)$ and hence $y \in \cS'(\R_+,\LL)$, by \eqref{ww}.

Next, as both functions $s \mapsto \cL[y](s)$ and $s \mapsto Y(s)$ are holomorphic in $\C_+$, \eqref{lap1} yields 
$\cL[y](s)=Y(s)$ for all $s\in\C_+$, by unique continuation. As a consequence the Laplace transform of $u :=  \partial_t^2 y \in \cS'(\R_+,\LL)$ reads
$$
\cL[u](s) = s^2 Y(s) 
= \sum_{n=1}^\infty \f{w(s)}{sw(s)+\la_n} \langle u_0,\vp_n \rangle_\LL \vp_n,\ s \in \C_+,
$$
which establishes for each $s \in \C_+$ that $\cL[u](s)$ is a solution to \eqref{eq1} with $F=0$. Moreover, since the operator $A$ is positive and $s w(s) \in [0,+\infty)$ for all $s \in (0,+\infty)$, then $A+s w(s)$ is boundedly invertible in $L^2(\Omega)$. Therefore, $\cL[u](s)=(A+sw(s))^{-1} w(s) u_0$ is uniquely defined by \eqref{eq1} and the same is true for $u$, by injectivity of the Laplace transform. 

It remains to prove that $u$ is expressed by \eqref{sol-u-distri} with $F=0$. This can be done by noticing for
all $s \in \ga(\ve,\te)$ that the function $t \mapsto Y(s) \e^{s t} u_0$ is twice continuously differentiable in $\R_+$ and for all $t \in (0,+\infty)$ that $s \mapsto s^k Y(s) \e^{s t} u_0 \in L^1(\ga(\ve,\te))$ with $k=1,2$. As a matter of fact, we have
$$
\norm{s^k Y(s) e^{s t}u_0}_\LL 
\leq \f{2\|\mu\|_{L^\infty(0,1)}}{\la_1 \sin\te} \vartheta(\ve) |s|^{k-2} \e^{\abs{s} (\cos \te) t}\|u_0\|_\LL,\ k=1,2,\ s \in \ga_\pm(\ve,\te),\ t \in \R_+,$$
from the definition \eqref{def-Y} of $Y$, Lemma \ref{lm0}  and Statement (b) in Lemma \ref{lm-te}, whence $r \mapsto r^k Y(r \e^{\pm i \te})e^{r (\cos \te) t} \in L^1(\ve,+\infty)$.
Therefore, with reference to \eqref{def-S0} we deduce from \eqref{t1b} that $u(t)=S_0(t) u_0$ for all $t \in(0,+\infty)$. This yields $u \in \cC((0,T],\LL)$ and completes the proof of the result when $F=0$. We turn now to examining the case where $u_0=0$.\\

\noindent 2) {\it Step 2: $u_0=0$}. For $s \in \C \setminus \R_-$, we introduce the family of bounded operators in $\LL$,
\begin{equation}
\label{def-Phi}
\Phi(s)\psi := \sum_{n=1}^\infty \f1{s(sw(s)+\la_n)}\langle \psi,\vp_n\rangle_\LL \vp_n,\ \psi\in \LL,
\end{equation}
and we recall from \eqref{ess1} that 
\begin{equation}
\label{ess2}
\norm{\Phi(s)}_{\cB(\LL)}\leq C\max( \abs{s}^{-1-\al_0}, \abs{s}^{-1-\al_0+\de}),\ s \in z \in\C \setminus \R_-,
\end{equation} 
for some positive constant $C$, depending only on $\mu$, in such a way that we have
\begin{equation}
\label{ess3}
\sup_{r \in [1,+\infty) } \norm{\Phi(r+\i \cdot)}_{L^k(\R,\cB(\LL)) } <+\infty,\ k=1,2.
\end{equation}
Thus, by arguing in the same way as in Step 1, we infer from \eqref{ess2}-\eqref{ess3} that
\begin{equation}
\label{s3}
\phi(t):=\f{1}{2\pi\i} \int_{1-\i\infty}^{1+\i\infty} \e^{ts}\Phi(s) \ds = \left\{ \begin{array}{ll} 0 & \mbox{if}\ t\in(-\infty,0), \\ \f1{2\pi\i} \int_{\ga(\ve,\te)} \e^{ts} \Phi(s) \ds & \mbox{if}\ t\in[0,+\infty), \end{array} \right.
\end{equation}
for arbitrary $\ve \in (0,1)$ and $\te \in \left( \frac{\pi}{2}, \pi \right)$, the contour $\gamma(\ve,\te)$ being still defined by \eqref{def-ga}-\eqref{def-ga-p}. Moreover, using  \eqref{ess2}, we find that
\begin{equation}
\label{ess4}
t \mapsto \langle t \rangle^{-m_0} \phi(t)\in L^\infty(\R,\cB(\LL))\ \mbox{with}\ m_0:=1+\al_0.
\end{equation}
As a consequence we have $\phi\in \cS'(\R_+,\cB(\LL))$ and 
\begin{equation}
\label{lap3}
\cL[\phi\psi](s)=\Phi(s)\psi,\ s\in\C_+,\ \psi\in \LL.
\end{equation}
Let us denote by $\wt F$ the function $t \mapsto F(t,\cdot)$ extended by zero in $\R \setminus (0,T)$, and put
\begin{equation}
\label{co}
(\phi*\wt F)(t):=\int_0^t \phi(t-\tau)\wt F(\tau)\d\tau,\ t \in \R.
\end{equation}
Evidently, $\phi*\wt{F}\in \cS'(\R_+,\LL)$ when $T \in (0,+\infty)$, whereas for $T=+\infty$, the assumption $t \mapsto \langle t \rangle^{-m} F(t,\cdot) \in L^\infty(\R_+,\LL)$ yields  
\begin{equation}
\label{esss2}
\norm{(\phi*\wt{F})(t)}_\LL \leq \norm{\langle t \rangle^{-m_0}\phi}_{L^\infty(\R_+,\cB(\LL))} \norm{\langle t \rangle^{-m} F}_{L^\infty(\R_+,\LL)} \langle t \rangle^{1+m+m_0},\ t \in (0,+\infty),
\end{equation}
and hence $\phi*\wt{F} \in \cS'(\R_+,\LL)$ as well.
Further, as we have $\inf\{\ep \in (0,+\infty);\ t\mapsto \e^{-\ep t}\phi(t)\in L^1(0,+\infty;\cB(\LL))\}=0$ by \eqref{ess4} and
$\inf\{\ep \in (0,+\infty); \ t\mapsto \e^{-\ep t}\wt{F}(t)\in L^1(0,+\infty;\LL)\}=0$ from the assumption $t \mapsto \langle t \rangle^{-m} F(t,\cdot) \in L^\infty(\R_+,\LL)$, then it holds true for all $s \in \C_+$ that $\cL[\phi](s)
=\int_0^{+\infty} \phi(t)\e^{-st}\dt$ and $\wh{F}(s):=\cL[\wt{F}](s)=\int_0^{+\infty} \wt{F}(t)\e^{-st}\dt$. From this and \eqref{esss2} it then follows that
$\cL[\phi*\wt{F}](s)
=\cL[\phi](s)\cL[\wt{F}](s)$. As a consequence the function $v:=\pa_t(\phi*\wt{F})\in \cS'(\R_+,\LL)$ fulfills
$$
\cL[v](s) = s\cL[\phi*\wt{F}](s) 
= \sum_{n=1}^{+\infty}\f1{sw(s)+\la_n}\langle \wh F(s),\vp_n\rangle_\LL \vp_n,\ s \in \C_+, 
$$
in virtue of \eqref{lap3}, showing that $\cL[v](s)$ is a solution to \eqref{eq1} for all $s\in(0,+\infty)$. 

It remains to show that
\begin{equation}
\label{fin}
v(t)=\int_0^t S_1(t-\tau)\wt{F}(\tau)\d\tau,\ t \in (0,+\infty).
\end{equation}
To do that, we refer to \eqref{s3} and \eqref{co}, apply Fubini's theorem, and obtain
$$
(\phi*\wt{F})(t)=\f1{2\pi\i}\int_{\ga(\ve,\te)} p(s,t)\ds\ \mbox{with}\ p(s,t):=\int_0^t \e^{(t-\tau)s} \Phi(s)\wt{F}(\tau)\d\tau,\ t \in (0,+\infty).
$$
Therefore, we have
$\pa_t p(s,t)
= \Phi(s)\wt{F}(t)+ s \int_0^t \e^{(t-\tau)s} \Phi(s)\wt{F}(\tau)\d\tau$ for all $s\in\ga(\ve,\te)$ and $t \in (0,+\infty)$, and consequently
\begin{eqnarray*}
\norm{\pa_t p(s,t)}_\LL
& \le & \norm{\Phi(s)}_{\cB(\LL)} \left( \norm{\wt{F}(t)}_\LL +  \abs{s \int_0^t  \e^{\tau s} \d\tau} \norm{\wt{F}}_{L^\infty(0,t+1;\LL)} \right)\\
& \le & \left( 1+\abs{s} \int_0^t \e^{\tau \Re s} \d \tau \right) \norm{\Phi(s)}_{\cB(\LL)}\norm{\wt{F}}_{L^\infty(0,t+1;\LL)}.
\end{eqnarray*}
Putting this with \eqref{ess2}, we get that $\norm{\pa_t p(s,t)}_\LL \le C q(s,t) \norm{\wt{F}}_{L^\infty(0,t+1;\LL)}$ for some positive
constant $C$ depending only on $\mu$, and
$$ q(s,t):=\left\{ \begin{array}{ll} \max(\abs{s}^{-1-\al_0+\de},\abs{s}^{-1-\al_0}) \abs{\cos \te}^{-1} & \mbox{if}\ s\in\ga_\pm(\ve,\te), \\
\max( \ve^{-1-\al_0+\de}, \ve^{-1-\al_0})(1+\e^{t\ve})& \mbox{if}\ s \in \ga_c(\ve,\te). \end{array} \right. $$
As $s \mapsto q(s,t) \in L^1(\ga(\ve,\te))$ for each $t \in (0,+\infty)$, then we have
\begin{eqnarray}
v(t) = \pa_t(\phi*\wt{F})(t) & = & \f1{2\pi\i} \int_{\ga(\ve,\te)} \pa_t p(s,t) \ds \nonumber \\
& = & \f1{2\pi\i} \int_{\ga(\ve,\te)}  \int_0^t  \e^{(t-\tau)s} s \Phi(s) \wt{F}(\tau) d\tau \ds + r(\ve,\te) \wt{F}(t), \label{equ-partialS3}
\end{eqnarray}
with $r(\ve,\te):=\f1{2\pi\i} \int_{\ga(\ve,\te)} \Phi(s) \d s$. The next step is to prove that $r(\ve,\te)=0$. For $R \in (1,+\infty)$, put $C_R(\te):= \{ R \e^{i \be},\ \be \in [-\te,\te] \}$ and 
$\ga_R(\ve,\te):= \{ s \in \ga(\ve,\te),\ \abs{s} \le R \}$. In light of Lemma \ref{lem-sw>s}, we have for all $n \in \N$,
\begin{equation}
\label{equ-gaR}
\int_{\ga_R(\ve,\te)} \f1{s(sw(s) + \la_n)} \ds - \int_{C_R(\te)} \f1{s(sw(s) + \la_n)} \ds=0,
\end{equation}
by the Cauchy theorem, with 
$\abs{\int_{C_R(\te)} \f1{s(sw(s) + \la_n)} \ds}
\le 2C\te R^{-\al_0+\de}$, the constant $C$ being independent of $R$ and $n$. Thus, sending $R$ to infinity in \eqref{equ-gaR}, we get that $\int_{\ga(\ve,\te)} \f1{s(sw(s) + \la_n)} \ds=0$ for every $n \in \N$, whence $\int_{\ga(\ve,\te)} \Phi(s) \d s=0$. From this and  \eqref{equ-partialS3} we get
\eqref{fin} upon applying the Fubini theorem. Hence $u:=v_{|Q}$ is a weak solution to \eqref{eq1} with $u_0=0$, expressed by \eqref{sol-u-distri}, which establishes that $u\in \cC((0,T],\LL)$.

Finally, since a weak solution to \eqref{eq1} is necessarily unique in virtue of Definition \ref{def-u-distri} then the desired result follows readily from Step 1 and Step 2 by invoking the superposition principle.
\end{proof}

Theorem \ref{thm-forward} being a straightforward byproduct of Proposition \ref{pr-euws}, we may now turn to proving Theorems \ref{thm-L1} and \ref{thm-H2}.

\section{Improved regularity: proof of Theorems \ref{thm-L1} and \ref{thm-H2}}
\label{sec-proofs}

The proof of Theorems \ref{thm-L1} and \ref{thm-H2} is by means of suitable time-decay estimates for the operators $S_j$, $j=1,2$, introduced in Proposition \ref{pr-euws}, that are established in the coming section.

\subsection{Time-decay estimates}

We start with $S_0$. 

\begin{lem}
Let the weight function $\mu \in L^\infty(0,1)$ be non-negative.
Then, for all $\ga$ and $\tau$ in $(0,1]$, with $\ga\le\tau$, there exists a positive constant $C$, depending only on $\te$, $\mu$, $\la_1$, $\tau$ and $\ga$, such that the estimate
\label{lem-strichartz-homo}
$$
\|A^\tau S_0(t)\psi\|_\LL
\le C \e^{T} \|\psi\|_{D(A^\ga)} t^{\ga-\tau}
$$
holds uniformly in $t \in (0,T]$ and $\psi \in D(A^\ga)$.
\end{lem}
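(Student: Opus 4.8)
The plan is to estimate the scalar kernel
$$
k_n(t) := \frac{1}{2\pi\i} \int_{\ga(\ve,\te)} \frac{w(s)}{sw(s)+\la_n} \e^{st}\, \ds,
$$
so that $A^\tau S_0(t)\psi = \sum_n \la_n^\tau k_n(t) \langle \psi,\vp_n\rangle_\LL \vp_n$, and then to bound $\la_n^\tau |k_n(t)|$ by $C\e^T \la_n^\ga t^{\ga-\tau}$ uniformly in $n$; since $\|\psi\|_{D(A^\ga)}^2 = \sum_n \la_n^{2\ga}|\langle \psi,\vp_n\rangle_\LL|^2$ (up to the usual equivalence of norms), squaring and summing then yields the claimed inequality. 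So the whole proof reduces to a single scalar contour-integral estimate, uniform in $\la_n \ge \la_1 > 0$ and $t \in (0,T]$.

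For the scalar estimate I would first use the scaling freedom in the contour: the integral is independent of $\ve$, so for a given $t$ choose $\ve = 1/t$ (as is already done in Step 1 of the proof of Proposition \ref{pr-euws}). Split $\ga(\ve,\te) = \ga_+ \cup \ga_c \cup \ga_-$. On the rays $\ga_\pm$, parametrize $s = r\e^{\pm\i\te}$ with $r \ge 1/t$; here $\Re s = r\cos\te < 0$, so $|\e^{st}| = \e^{r(\cos\te)t}$ decays exponentially. Now use Lemma \ref{lm0}: from \eqref{a0} with $\nu$ chosen appropriately, one has
$$
\frac{\la_n^\tau |w(s)|}{|sw(s)+\la_n|} = \frac{\la_n^\tau |sw(s)|^{1-\nu}}{|sw(s)+\la_n|}\cdot \frac{|w(s)|}{|sw(s)|^{1-\nu}} \le \frac{2}{\sin\te}\, \la_n^{\tau-\nu}\, |s|^{-1}|sw(s)|^{\nu},
$$
and then control $|sw(s)|$ from above via $\zeta$ (Lemma \ref{lm-te}). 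The key point is to interpolate correctly: one wants the factor $\la_n^\tau$ to turn into $\la_n^\ga$ at the cost of a power of $|s|$, and then the remaining $s$-integral $\int_{1/t}^\infty$ of $r^{(\text{power})-1}\e^{r(\cos\te)t}\,\dr$ should scale like $t^{\tau-\ga}$ after the substitution $r = \sigma/t$. Balancing the $\la_n$-exponent (which forces a relation between $\nu$ and $\tau-\ga$) against the integrability of the $\sigma$-integral at $\sigma \to \infty$ is the delicate bookkeeping. On the arc $\ga_c$, $|s| = 1/t$ and the arc has length $2\te/t$, so $|\e^{st}| \le \e$; the integrand is bounded using \eqref{a0b} (giving $|sw(s)+\la_n| \ge C_\te \la_n$) together with $|w(s)| \le \|\mu\|_\infty \vartheta(1/t) \le \|\mu\|_\infty t$ from Lemma \ref{lm-te}(b), which again produces the right power of $t$ and $\la_n$.

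The main obstacle I anticipate is the bookkeeping that makes both the $\la_n$-power and the $t$-power come out simultaneously: $|sw(s)|$ has genuinely different behavior for small versus large $|s|$ (it behaves like $|s|^{\al}$ for various $\al \in [0,1]$ depending on the relative size of $|s|$ and $1$), so the crude bound $|sw(s)| \le \|\mu\|_\infty\,\zeta(|s|)$ must be used carefully, distinguishing $|s| \le 1$ from $|s| \ge 1$ on the rays, i.e. $t \ge 1$ from $t \le 1$ — this is presumably where the $\e^T$ factor enters (for $t$ bounded, say $t \le 1$, one absorbs constants; for $t$ possibly large one pays $\e^{r(\cos\te)t}$ integrated, which stays controlled, but the worst constants get lumped into $\e^T$). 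A clean way to organize it is: treat $t \in (0,1]$ and $t \in (1,T]$ separately, in each regime choosing $\ve$ to make $|s|$ sit on the appropriate side of $1$ so that $\zeta(|s|)$ simplifies to a single power, and then the two scalar integrals are elementary. Once $\la_n^\tau|k_n(t)| \le C\e^T \la_n^\ga t^{\ga-\tau}$ is in hand, Parseval finishes the proof; I would also remark that the hypothesis $\ga \le \tau$ is what guarantees $t^{\ga-\tau}$ is the genuine singular power at $t=0$ (for $\tau = \ga$ one recovers boundedness, consistent with $S_0(0) = \mathrm{Id}$).
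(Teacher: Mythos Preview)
Your overall reduction to the scalar bound $\la_n^\tau|k_n(t)|\le C\e^T\la_n^\ga t^{\ga-\tau}$ followed by Parseval is exactly right, and the idea of exploiting \eqref{a0} with $\nu=\tau-\ga$ on the rays is sound. But the arc estimate as you wrote it contains a concrete false inequality: you claim $\vartheta(1/t)\le t$, yet by Lemma~\ref{lm-te}(b) one has $\vartheta(1/t)=\zeta(t)$, and for $t\in(0,1)$ one has $\zeta(t)/t=\vartheta(t)>\vartheta(1)=1$, so $\vartheta(1/t)>t$. The correct bound on the arc is only $|w(s)|\le\|\mu\|_{L^\infty(0,1)}\zeta(t)\le\|\mu\|_{L^\infty(0,1)}$ for $t\le1$, and combined with \eqref{a0b} and arc length $2\te/t$ this yields $|k_{n,c}(t)|\lesssim(\la_n t)^{-1}$, which for $\ga>0$ is strictly more singular as $t\to0$ than the target $\la_n^{\ga-\tau}t^{\ga-\tau}$ (since $\ga-\tau>-1$). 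So with the choice $\ve=1/t$ you cannot close the arc using \eqref{a0b} alone; you would also need to interpolate via \eqref{a0} on the arc (after extending it to angles $|\be|\le\pi/2$, which is easy since then $\Re(sw(s))\ge0$).

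The paper avoids this by a different organization: it keeps $\ve$ \emph{fixed} and small, namely $\ve=\tfrac12\min\big(1,\eta\la_1,\zeta^{-1}(\eta\la_1)\big)$ with $\eta=(2\|\mu\|_{L^\infty(0,1)})^{-1}$, so the arc contributes a harmless $\e^T/\la_n$; it then splits each ray at the $\la_n$-dependent point $r=\eta\la_n$. On the inner piece $(\ve,\eta\la_n)$ it uses $|sw(s)+\la_n|\ge\tfrac{\sin\te}{2}\la_n$ together with $\e^{-r|\cos\te|t}\le C(rt)^{\ga-\tau}$; on the outer piece $(\eta\la_n,\infty)$ it uses \eqref{a0} with $\nu=0$ only. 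Both pieces give $C\la_n^{\ga-\tau}t^{\ga-\tau}$ directly, with no $\zeta$-bookkeeping and no case split in $t$. Your interpolation with general $\nu$ is a legitimate alternative on the rays, but the paper's $\la_n$-based splitting is cleaner and sidesteps the arc issue entirely.
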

\begin{proof}
In light of \eqref{def-S0}, it is enough to estimate
\begin{equation}
\label{b0z}
E_n(t):=\f{1}{2\pi\i} \int_{\ga_{(\ve,\te)}} 
\f{w(s)}{sw(s)+\la_n} \e^{st}\ds,
\end{equation}
for all $t\in(0,T)$ and $n \in \N$. With reference to \eqref{def-ga}-\eqref{def-ga-p}, we have
\begin{equation}
\label{b0a} 
E_n(t) = \sum_{j \in \{ c, \pm \}} E_{n,j}(t)\ \mbox{where}\ 
E_{n,j}(t) := \f{1}{2\pi\i} \int_{\ga_j{(\ve,\te)}} 
\f{w(s)}{sw(s)+\la_n} \e^{st}\ds\ \mbox{for}\ j \in \{ c,\pm \}.
\end{equation}
We shall treat each of the three terms $E_{n,j}(t)$, $j \in \{ c,\pm \}$, separately. 

We start with $E_{n,c}(t)$. As a preamble, we choose $\ve \in (0,1)$ so small that
\begin{equation}
\label{b0b}
\ve < \min \left( \eta \la_1, \zeta^{-1}(\eta \la_1) \right),
\end{equation}
where $\eta:= \f{1}{2 \| \mu \|_{L^{\infty}(0,1)}}$ and $\zeta^{-1}$ denotes the function inverse to $\zeta$ on $(0,+\infty)$, whose existence is guaranteed by Statement (a) in Lemma \ref{lm-te}. As a matter of fact we may take $\ve:=\f{1}{2} \min \left( 1, \eta \la_1, \zeta^{-1}(\eta \la_1) \right)$ in such a way that $\ve$ is entirely determined by $\la_1$ and $\| \mu \|_{L^\infty(0,1)}$. In light of  \eqref{y1} and \eqref{b0b}, we have
$\abs{s w(s) + \la_n} \geq \f{\la_n}{2}$ for all $s \in \ga_{c}(\ve,\te)$, and consequently
\begin{equation}
\label{b1}
\abs{E_{n,c}(t)} \le \f{\e^{T} \la_1}{\la_n},\ t \in[0,T],\ n\in\N. 
\end{equation}

We turn now to estimating $E_{n,\pm}(t)$. Bearing in mind that $\ga_\pm(\te,\ve)=\{ r \e^{\pm \i \te},\ r \in (\ve,+\infty) \}$, we decompose $E_{n,\pm}$ into the sum
\begin{equation}
\label{b2}
E_{n,\pm}(t) = \sum_{k=1,2} E_{n,\pm,k}(t),
\end{equation}
where
$
E_{n,\pm,k}(t) := \f{1}{2 \pi \i}\int_{I_{n,k}}
\f{\e^{\pm \i \te} w(r \e^{\pm \i \te})}{r \e^{\pm \i \te}w(r \e^{\pm \i \te})+\la_n} \e^{rt \e^{\pm \i \te}} \dr
$ 
for $k=1,2$, $I_{n,1}:=(\ve,\eta \la_n)$ and $I_{n,2}:=(\eta \la_n,+\infty)$.
Notice from \eqref{b0b} that $\ve \in (0,\eta \la_n)$ and hence that the interval $I_{n,1}$ is non-empty.
Next, for all $r \in I_{n,1}$, it is clear from \eqref{a0b} that 
$\abs{r \e^{\pm \i \te} w(r \e^{\pm \i \te}) + \la_n} \ge \f{\la_n \sin \te}{2}$, and from the second inequality of \eqref{y1} combined with Lemma \ref{lm-te}, that
$$ 
\abs{w(r \e^{\pm \i \te})}
\le \| \mu \|_{L^\infty(0,1)} \vartheta(r) 
\le \| \mu \|_{L^\infty(0,1)} \vartheta(\ve) 
\le 
\| \mu \|_{L^\infty(0,1)} \f{\zeta(\ve)}{\ve} 
\le \| \mu \|_{L^\infty(0,1)} \f{\eta \la_1}{\ve} 
\le \f{\la_1}{2\ve}.
$$
As a consequence we have
$$
\abs{E_{n,\pm,1}(t)} 
\le \f{1}{2 \pi}\int_\ve^{\eta \la_n}
\f{|\e^{\pm \i \te} w(r \e^{\pm \i \te})|}{|r \e^{\pm \i \te}w(r \e^{\pm \i \te})+\la_n|} \e^{-rt \abs{\cos\te}} \dr 
\le \f{\la_1}{2 \pi \ve  \la_n \sin \te} \int_\ve^{\eta \la_n}\e^{-rt \abs{\cos\te}} \dr
$$
whence
\begin{equation}
\label{b3}
\abs{E_{n,\pm,1}(t)} \le \f{C}{\la_n t^{\tau-\ga}} \int_\ve^{\eta\la_n} 
\f{\dr}{r^{\tau-\ga}} 
\le \f{C}{\la_n^{\tau-\ga} t^{\tau-\ga}}.
\end{equation}
Here and in the remaining part of this proof, $C$ denotes a positive constant depending only on $\te$, $\norm{\mu}_{L^\infty(0,1)}$, $\la_1$, $\tau$ and $\ga$, that may change from line to line.
Further, applying \eqref{a0} with $\nu=0$, we get that
$$
\abs{E_{n,\pm,2}(t)}
\le \f{1}{2 \pi}\int_{\eta \la_n}^{+\infty}
\f{|r \e^{\pm \i \te} w(r \e^{\pm \i \te})|}{|r \e^{\pm \i \te}w(r \e^{\pm \i \te})+\la_n|} \e^{-rt|\cos\te|} \f{\dr}{r} 
\le \f{1}{\pi \sin \te} \int_{\eta \la_n}^{+\infty} \e^{-rt \abs{\cos\te}} \f{\dr}{r},
$$
which entails
\begin{equation}
\label{b3b}
\abs{E_{n,\pm,2}(t)} \le \f{C}{t^{\tau-\ga}} \int_{\eta \la_n}^{+\infty}  \f{\dr}{r^{1+\tau-\ga}} 
\le \f{C}{\la_n^{\tau-\ga} t^{\tau -\ga}}.
\end{equation}
Now, putting \eqref{b0a} and \eqref{b1}--\eqref{b3b} together, we find that
 $$
\label{b4}
\la_n^{\tau} \abs{E_n(t)}
\le C \e^{T} \la_n^\ga t^{\ga-\tau},\ t \in (0,T],
$$
Therefore, for all $\psi\in D(A^\ga)$ and all $t \in (0,T]$, we have
$$
\|A^\tau S_0(t)\psi\|_\LL^2
=\sum_{n=1}^\infty 
\la_n^{2\tau} \abs{E_n(t)}^2 \abs{\langle \psi,\vp_n \rangle_\LL}^2
\le C^2 \e^{2T} t^{2(\ga-\tau)}
\sum_{n=1}^\infty
\la_n^{2\ga} \abs{\langle \psi,\vp_n \rangle_\LL}^2,
$$
which yields the desired result.
\end{proof}

We turn now to examining the time-evolution of the operator $S_1$.

\begin{lem}
\label{lem-strichartz-nonhomo}
Assume that $\mu\in L^\infty(0,1)$ is non-negative and satisfies \eqref{cnd-mu1}. Then, for all $\ka\in[0,1)$ and all $\be \in (1-\al_0(1-\ka),1)$, there exists a positive constant $C$, depending only on $\te$, $\la_1$, $\mu$, $\ka$ and $\be$, such that the estimate
$$ 
\|A^\ka S_1(t)\psi\|_\LL \le C \e^{T} \|\psi\|_\LL t^{-\be},
$$
holds uniformly in $t\in(0,T]$ and $\psi\in\LL$.
\end{lem}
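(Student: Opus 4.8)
The plan is to reduce the bound to an estimate, uniform in $n\in\N$ and $t\in(0,T]$, on the scalar factor
$$
G_n(t):=\f1{2\pi\i}\int_{\ga(\ve,\te)}\f{\e^{st}}{sw(s)+\la_n}\ds
$$
appearing in \eqref{def-S1}; indeed, Parseval's identity in the orthonormal basis $\{\vp_n\}$ gives $\|A^\ka S_1(t)\psi\|_\LL^2=\sum_{n\ge1}\la_n^{2\ka}|G_n(t)|^2|\langle\psi,\vp_n\rangle_\LL|^2$, so it suffices to prove $\la_n^\ka|G_n(t)|\le C\e^T t^{-\be}$ for every $n$ and $t$ (this simultaneously yields $S_1(t)\psi\in D(A^\ka)$). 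I will fix once and for all some $\te\in(\f\pi2,\pi)$ and, using the $\ve$-independence of the integral granted by Proposition \ref{pr-euws}, choose the radius adapted to the time scale, $\ve:=\min(1,t^{-1})$. Writing $G_n=G_{n,c}+G_{n,+}+G_{n,-}$ along the decomposition \eqref{def-ga}--\eqref{def-ga-p}, I will treat the central arc $\ga_c(\ve,\te)$ and the rays $\ga_\pm(\ve,\te)$ separately; throughout, $C$ denotes a positive constant depending only on $\te,\la_1,\mu,\ka,\be$ that may change from line to line.

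On $\ga_c(\ve,\te)$, estimate \eqref{a0b} gives $|sw(s)+\la_n|\ge\f{\sin\te}2\la_n$, so $\f{\la_n^\ka}{|sw(s)+\la_n|}\le\f2{\sin\te}\la_n^{\ka-1}\le\f2{\sin\te}\la_1^{\ka-1}$ since $\ka<1$; as $|\e^{st}|\le\e^{\ve t}$ on this arc of length $2\te\ve$, this yields $\la_n^\ka|G_{n,c}(t)|\le C\ve\e^{\ve t}$, which for $t\ge1$ (where $\ve=t^{-1}$, $\be<1$) is $\le C\e\,t^{-1}\le C\e\,t^{-\be}$ and for $t\le1$ (where $\ve=1$) is $\le C$. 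On $\ga_\pm(\ve,\te)$, parametrizing $s=r\e^{\pm\i\te}$ with $r\ge\ve$ and invoking \eqref{a0} with $\nu=\ka\in[0,1]$, I obtain $\f{\la_n^\ka}{|sw(s)+\la_n|}\le\f2{\sin\te}|sw(s)|^{\ka-1}$, which removes the dependence on $n$:
$$
\la_n^\ka|G_{n,\pm}(t)|\le\f1{\pi\sin\te}\int_\ve^{+\infty}\bigl|r\e^{\pm\i\te}w(r\e^{\pm\i\te})\bigr|^{\ka-1}\e^{-r|\cos\te|t}\dr .
$$

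Estimating this last integral sharply in $t$ is the core of the argument and its main obstacle: because $w$ is not a pure power the scaling in $t$ is only approximate and produces logarithmic corrections. Since $\al\te\in(0,\pi)$ for $\al\in(0,1]$ one has $|sw(s)|\ge|\Im(sw(s))|=\int_0^1 r^\al\sin(\al\te)\mu(\al)\da$, and keeping only the window $(\al_0-\de,\al_0)$ and applying \eqref{cnd-mu1} gives $|sw(s)|\ge b_\te\int_{\al_0-\de}^{\al_0}r^\al\da=b_\te\,\f{r^{\al_0}-r^{\al_0-\de}}{\log r}=:M(r)$ for some $b_\te>0$. Elementary estimates (using $\e^x\ge1+x$) show that $M$ is continuous and strictly positive on $(0,+\infty)$, that $M(r)\ge b_\te\de\min(r^{\al_0},r^{\al_0-\de})$ for all $r>0$, and that $M(r)\ge\f{b_\te}2\,r^{\al_0}(\log r)^{-1}$ once $r\ge R_0:=2^{1/\de}$. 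Inserting $|sw(s)|^{\ka-1}\le M(r)^{\ka-1}$ and splitting $(\ve,+\infty)$ into $(\ve,1)$ (empty when $t\le1$), $[1,R_0]$ and $[R_0,+\infty)$: on $(\ve,1)$ the integrand is $\le Cr^{-\al_0(1-\ka)}$ with $\al_0(1-\ka)<\al_0<1$, so this piece is $\le\int_0^1 Cr^{-\al_0(1-\ka)}\dr=C$; on the compact $[1,R_0]$ it is bounded, hence this piece is $\le C$; and on $[R_0,+\infty)$ it is $\le Cr^{-\al_0(1-\ka)}(\log r)^{1-\ka}\e^{-r|\cos\te|t}$, so after the substitution $u=rt$ this piece is $\le Ct^{-(1-\al_0(1-\ka))}\int_{R_0t}^{+\infty}u^{-\al_0(1-\ka)}(\log u-\log t)^{1-\ka}\e^{-u|\cos\te|}\d u$, to which the elementary inequality $(\log u-\log t)^{1-\ka}\le(\log u)_+^{1-\ka}+|\log t|^{1-\ka}$ (valid for $u\ge R_0t$ and $t\le1$) applies; since the resulting $u$-integrals converge, this gives $\le C t^{-(1-\al_0(1-\ka))}|\log t|^{1-\ka}$ when $t\le\f12$.

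It remains to collect these contributions. When $t\ge1$ all three ray pieces are $O(1)$ (for the outer one using $\e^{-r|\cos\te|t}\le\e^{-r|\cos\te|}$), and since $\e^T t^{-\be}$ is bounded below by a positive constant over $\{0<t\le T\}$, uniformly in $T>0$, every $O(1)$ bound — including the central arc bound $\le C$ for $t\le1$ and the ray bounds for $t\in[\f12,1]$ — is dominated by $C\e^T t^{-\be}$. For $0<t\le\f12$ the leading term is $Ct^{-(1-\al_0(1-\ka))}|\log t|^{1-\ka}$, and writing $\ep:=\be-(1-\al_0(1-\ka))>0$ — this is exactly where the hypothesis $\be>1-\al_0(1-\ka)$ is used — we have $t^{-(1-\al_0(1-\ka))}|\log t|^{1-\ka}=t^{-\be}\bigl(t^{\ep}|\log t|^{1-\ka}\bigr)\le C t^{-\be}$ because $t\mapsto t^{\ep}|\log t|^{1-\ka}$ is bounded on $(0,\f12]$. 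Altogether $\la_n^\ka|G_n(t)|\le C\e^T t^{-\be}$ uniformly in $n$ and $t$, and inserting this into the Parseval identity above gives $\|A^\ka S_1(t)\psi\|_\LL\le C\e^T\|\psi\|_\LL t^{-\be}$, as claimed.
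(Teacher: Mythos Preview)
Your proof is correct, but it is organized differently from the paper's. Two main differences:

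\begin{itemize}
\item \textbf{Choice of $\ve$ and splitting of the rays.} The paper keeps $\ve$ fixed (determined by $\la_1$ and $\|\mu\|_{L^\infty}$) and splits each ray at the $n$-dependent point $\eta\la_n$: on $(\ve,\eta\la_n)$ it uses the crude bound \eqref{a0b} and on $(\eta\la_n,+\infty)$ it invokes \eqref{a0} with $\nu=\ka$. You instead take $\ve=\min(1,t^{-1})$, apply \eqref{a0} with $\nu=\ka$ on the whole ray, and thereby eliminate all dependence on $n$ in a single stroke; the subsequent splitting at $1$ and $R_0$ is $n$-independent.
\item \textbf{Use of the hypothesis $\be>1-\al_0(1-\ka)$.} Having reduced to $\int |sw(s)|^{\ka-1}\e^{-r|\cos\te|t}\dr$, the paper exploits \eqref{cnd-mu1} through the pure power bound $|sw(s)|\ge c_0 r^\rho$ for any $\rho\in(\al_0-\de,\al_0)$, and chooses $\rho\in\big(\max(\al_0-\de,\tfrac{1-\be}{1-\ka}),\al_0\big)$ so that the tail integral $\int_{\eta\la_n}^\infty r^{-\rho(1-\ka)-\be}\dr$ converges; this absorbs the slack $\be>1-\al_0(1-\ka)$ directly into the exponent and never produces a logarithm. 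You keep the sharper bound $|sw(s)|\gtrsim r^{\al_0}/\log r$ (the $\rho\to\al_0$ limit), which costs a factor $|\log t|^{1-\ka}$ after rescaling, and then spend the same slack by bounding $t^{\ep}|\log t|^{1-\ka}$ on $(0,\tfrac12]$.
\end{itemize}

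The paper's argument is slightly cleaner (no logs, no case distinction on $t$), while yours has the advantage of dispensing with the $n$-dependent cut at $\eta\la_n$ and of making the exact place where the open condition on $\be$ enters completely transparent. Your use of the uniform lower bound $\inf_{0<t\le T,\ T>0}\e^T t^{-\be}=\e^\be\be^{-\be}>0$ to absorb all $O(1)$ contributions is correct and tidy.
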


\begin{proof}
We proceed as in the proof of Lemma \ref{lem-strichartz-homo}, the main novelty being that the function $t \in (0,T] \mapsto E_n(t)$, defined by \eqref{b0z} for all $n \in \N$, is replaced by
\begin{equation}
\label{c0}
G_n(t):=\f{1}{2\pi\i} \int_{\ga_{(\ve,\te)}} 
\f{1}{sw(s)+\la_n} \e^{st}\ds.
\end{equation}
With reference to \eqref{def-ga}-\eqref{def-ga-p}, we write
\begin{equation}
\label{c1} 
G_n(t) = \sum_{j \in \{ c, \pm \}} G_{n,j}(t)\ \mbox{with}\ 
G_{n,j}(t) := \f{1}{2\pi\i} \int_{\ga_j{(\ve,\te)}} 
\f{1}{sw(s)+\la_n} \e^{st}\ds\ \mbox{for}\ j \in \{ c,\pm \},\ \ve \in (0,1),
\end{equation}
and set $\ve=\f{1}{2} \min \left( 1, \eta \la_1, \zeta^{-1}(\eta \la_1) \right)$, where $\eta=\f{1}{2\| \mu \|_{L^\infty(0,1)}}$, as in the proof of Lemma \ref{lem-strichartz-homo}. Thus, using \eqref{y1} and \eqref{b0b}, we find upon arguing as in the derivation of \eqref{b1}, that
\begin{equation}
\label{c2} 
\abs{G_{n,c}(t)} \le\f{2 \e^T}{\la_n},\ t \in (0,T],\ n\in\N.
\end{equation}
The rest of the proof is to estimate $G_{n,\pm}(t)$. Firstly, we split $G_{n,\pm}(t)$ into the sum
\begin{equation}
\label{c3}
G_{n,\pm}(t) = \sum_{k=1,2} G_{n,\pm,k}(t),
\end{equation} 
where
$G_{n,\pm,k}(t) := \f{1}{2 \pi \i}\int_{I_{n,k}} \f{\e^{\pm \i \te}}{r \e^{\pm \i \te} w(r \e^{\pm \i \te}) + \la_n} \e^{r \e^{\pm \i \te} t} \dr$ for $k=1,2$, $I_{n,1}=(\ve,\eta \la_n)$ and $I_{n,2}=(\eta \la_n,+\infty)$. By mimicking the
proof of \eqref{b3}, we obtain for all $t \in (0,T]$ and all $n \in \N$, that
\begin{equation}
\label{c4}
\abs{G_{n,\pm,1}(t)} \le \f{1}{\pi \la_n\sin\te} \int_\ve^{\eta \la_n} \e^{-r | \cos \te | t} \dr \le \f{C}{\la_n^\ka t^\ka}, 
\end{equation}
where $C$ is a positive constant depending only on $\te$, $\| \mu \|_{L^\infty(0,1)}$ and $\ka$. Next, we estimate $G_{n,\pm,2}(t)$ by applying \eqref{a0} with $\nu=\ka$,
\begin{eqnarray}
\abs{G_{n,\pm,2}(t)} & \le & \f{1}{2 \pi \la_n^\ka} \int_{\eta \la_n}^{+\infty} 
\f{\la_n^\ka |r \e^{\pm \i \te} w(r \e^{\pm \i \te})|^{1-\ka}}{|r \e^{\pm \i \te} w(r \e^{\pm \i \te} \e^{\pm \i \te}) + \la_n|} 
\f{\e^{-r | \cos \te | t}}{|r \e^{\pm \i \te} w(r \e^{\pm \i \te})|^{1-\ka}} \dr \nonumber \\
& \le & \f{1}{\la_n^\ka \sin \te} \int_{\eta \la_n}^{+\infty} 
\f{\e^{-r |\cos \te | t}}{|r \e^{\pm \i \te} w(r \e^{\pm \i \te})|^{1-\ka}} \dr, \label{c5}
\end{eqnarray}
and bounding from below with the aid of \eqref{cnd-mu1} the denominator of the integrand in the last integral. Indeed, we have $\abs{r \e^{\pm \i \te} w(r \e^{\pm \i \te})} \ge | \Im ( r \e^{\pm \i \te} w(r \e^{\pm \i \te}) )| \ge \int_0^1 r^\al \sin (\al \te) \mu(\al) \da$ for all $r \in (\eta \la_n , +\infty)$, hence
$\abs{r \e^{\pm \i \te} w(r \e^{\pm \i \te})} \ge  \min(\sin((\al_0-\de)\te), \sin (\al_0 \te)) \f{\mu(\al_0)}{2}  \int_{\al_0-\de}^{\al_0} r^\al \da$. Consequently there exists a positive constant $c_0$, depending only on $\te$, $\mu$ and $\rho$, such that 
\begin{equation}
\label{c5b}
\abs{r \e^{\pm \i \te} w(r \e^{\pm \i \te})} \ge c_0 r^\rho,\ \rho \in (\al_0-\de,\al_0).
\end{equation}
Next, applying \eqref{c5b} with $\rho \in \left( \max \left( \al_0-\de , \f{1-\be}{1-\ka} \right), \al_0 \right)$ for some fixed $\be \in (1-\al_0(1-\ka),1)$, we infer from \eqref{c5} that
\begin{equation}
\label{c6}
\abs{G_{n,\pm,2}(t)} \le \f{C}{\la_n^\ka t^\be} \int_{\eta \la_n}^{+\infty} 
\f{\d r}{r^{\be + \rho(1-\ka)}} \le \f{C}{\la_n^\ka t^\be},
\end{equation}
where $C$ is a positive constant depending only on $\te$, $\la_1$, $\mu$, $\ka$ and $\be$. 
Now, putting \eqref{c1}--\eqref{c4} and \eqref{c6} together, we obtain that
\begin{equation}
\label{esti-Gn}
\abs{G_n(t)}
\le \f{C \e^T}{\la_n^\ka t^{\be}},\ t \in (0,T],\ n \in \N.
\end{equation}
Finally, recalling \eqref{def-S1}, \eqref{c0} and \eqref{esti-Gn}, we end up getting for all $\psi \in \LL$ that
$$
\|A^\ka S_1(t)\psi\|_\LL^2
=\sum_{n=1}^\infty 
\la_n^{2\ka} \abs{G_n(t)}^2 \abs{\langle \psi,\vp_n \rangle_\LL}^2
\le C^2 \e^{2T} t^{-2\be} \sum_{n=1}^\infty \abs{\langle \psi,\vp_n \rangle_\LL}^2,
$$
and the desired result follows from this and the Parseval identity.
\end{proof}

Having established Lemmas \ref{lem-strichartz-homo} and \ref{lem-strichartz-nonhomo}, we may now prove Theorem \ref{thm-L1}.

\subsection{Proof of Theorem \ref{thm-L1}}

We examine the two cases $F=0$ and $u_0=0$ separately.\\

\noindent a) We first assume that $a\in D(A^\ga)$ for some $\ga\in(0,1]$, and that
$F=0$. Then, by applying Lemma \ref{lem-strichartz-homo} with $\tau=1$, we get that
$$
\|A S_0(t)u_0\|_\LL\le C \e^T \|A^\ga u_0 \|_\LL t^{\ga-1},\ t \in (0,T].
$$
This and \eqref{sol-u-distri} yield \eqref{esti-u} in virtue of the equivalence of the norms in $\HH$ and in $D(A)$.

Let us now prove \eqref{esti-u_t}. We stick with the notations used in the proof of Lemmas \ref{lem-strichartz-homo} and \ref{lem-strichartz-nonhomo} and establish for every $\be \in \left( 1-\al_0 \ga , 1 \right)$ that
\begin{equation}
\label{p1}
\left|\f{\d E_n(t)}{\dt}\right|
\le C \e^T \la_n^\ga t^{-\be},\ t \in (0,T],\ n \in \N,
\end{equation}
where $C$ is the constant appearing in \eqref{esti-Gn}.
This can be done with the aid of \eqref{b0z}, \eqref{c0} and the Cauchy theorem, involving
$$
\f{\d E_n(t)}{\dt} 
=\f{1}{2\pi \i}\int_{\ga(\ve,\te)} \f{sw(s)}{sw(s) + \la_n}
\e^{st}\ds
=-\f{1}{2\pi \i}\int_{\ga(\ve,\te)} \f{\la_n}{sw(s) + \la_n}
\e^{st}\ds
=-\la_n G_n(t),
$$
upon applying the estimate \eqref{esti-Gn} with $\ka=1-\ga$.
In light of \eqref{sol-u-distri}-\eqref{def-S0} and \eqref{p1}, we find that
$$
\|\pa_tu(t)\|_\LL^2 
=\sum_{n=1}^\infty \abs{\f{\d E_n(t)}{\dt}}^2 \abs{ \langle u_0,\vp_n \rangle_\LL}^2 \le C^2 \e^{2T} t^{-2 \be} \sum_{n=1}^\infty  \la_n^{2\ga} \abs{ \langle u_0,\vp_n \rangle_\LL}^2,
$$
which leads to \eqref{esti-u_t}.\\

\noindent b) Suppose that $F \in L^1(0,T;\LL)$ and $u_0=0$. We recall for all $\ka \in [0,1)$ and $\be \in (1-\al_0(1-\ka),1)$ from Lemma~\ref{lem-strichartz-nonhomo}, that the estimate
$$
\|A^\ka S_1(t-\tau) F(\tau) \|_\LL
\le C \e^T (t-\tau)^{-\be}\|F(\tau)\|_\LL,\ t \in (0,T],\ \tau \in [0,t),
$$
holds for some constant $C \in (0,+\infty)$, independent of $T$, $t$ and $\tau$. Therefore we have
$$ 
\| A^\ka u (t) \|_\LL 
\leq C \e^T \left( \tau^{-\be} \ast \| F(\tau) \|_\LL \right)(t),\ t \in (0,T], 
$$
by \eqref{sol-u-distri}, where $\ast$ stands for the convolution operator. An application of Young's inequality then yields
$$
\left(\int_0^T \| u(t)\|_{D(A^\ka)}^p \dt\right)^{\f1p}
\le C \e^T \left(\int_0^T t^{-\be p}\dt\right)^{\f1p}
\int_0^T \|F(t) \|_\LL\dt,\
p \in \left[ 1, \f{1}{\be} \right],$$
and \eqref{esti_u-h2} follows from this and the equivalence of the norms in $H^{2 \ka}(\Om)$ and in $D(A^\ka)$.

\subsection{Proof of Theorem \ref{thm-H2}}

The derivation of Theorem \ref{thm-H2} essentially relies on the following technical result, whose proof is postponed to the appendix.

\begin{lem}
\label{lem-G_n}
Assume that $\mu \in \cC([0,1],\R_+)$ fulfills the condition \eqref{cnd-mu1} and let $G_n$, for $n \in \N$, be defined by \eqref{c0}. 
Then, we have
\begin{equation}
\label{g0a}
G_n(t)= -\f 1 \pi\int_0^{+\infty} \Phi_n(r) \e^{-rt} \dr,\ t \in (0,T),
\end{equation}
with
\begin{equation}
\label{g0b}
\Phi_n(r) := \f{\int_0^1r^\al \sin(\pi\al) \mu(\al)\da}
{\left( \int_0^1r^\al \cos(\pi\al) \mu(\al)\da + \la_n \right)^2 + \left( \int_0^1 r^\al \sin(\pi\al) \mu(\al)\da \right)^2}. 
\end{equation}
Moreover, there exists a positive constant $C$, such that we have
\begin{equation}
\label{g0c}
\int_0^{+\infty} \f{\Phi_n(r)}{r} \d r \leq \f{C}{\la_n},\ n \in \N.
\end{equation}
\end{lem}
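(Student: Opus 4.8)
The plan is to derive \eqref{g0a}--\eqref{g0b} by collapsing the Hankel-type contour $\ga(\ve,\te)$ onto the branch cut $\R_-$ of $s\mapsto s w(s)$ and reading off the jump of $s w(s)$ across it, and to derive \eqref{g0c} from a second contour manipulation based on the identity $\int_{\ga(\ve,\te)}\f{\ds}{s(s w(s)+\la_n)}=0$ already established during the proof of Proposition \ref{pr-euws}.

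For \eqref{g0a}--\eqref{g0b}, fix $t\in(0,T)$ and $n\in\N$. The integrand $s\mapsto\f{\e^{st}}{s w(s)+\la_n}$ is holomorphic on $\C\setminus\R_-$ since, by Lemma \ref{lem-sw>s}, its denominator never vanishes there; as the value of $\int_{\ga(\ve,\te)}$ does not depend on $\ve\in(0,1)$ or $\te\in\left(\f{\pi}{2},\pi\right)$, $G_n(t)$ equals the limit of this integral as $\ve\to 0^+$ and $\te\to\pi^-$. On $\ga_c(\ve,\te)$, the bound $\abs{s w(s)+\la_n}\ge\f{\la_n}{2}$ (valid for $\ve$ small, by the first inequality in \eqref{y1} and Lemma \ref{lm-te}) together with $\abs{\e^{st}}\le\e^{\ve t}$ and the $O(\ve)$ length of $\ga_c(\ve,\te)$ gives $\int_{\ga_c(\ve,\te)}\f{\e^{st}}{s w(s)+\la_n}\ds\to 0$. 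On $\ga_\pm(\ve,\te)$ I parametrize $s=r\e^{\pm\i\te}$ and let $\te\to\pi^-$, using the boundary values $s w(s)\to\int_0^1 r^\al\e^{\pm\i\pi\al}\mu(\al)\da=P(r)\pm\i Q(r)$, where $P(r):=\int_0^1 r^\al\cos(\pi\al)\mu(\al)\da$ and $Q(r):=\int_0^1 r^\al\sin(\pi\al)\mu(\al)\da$ are the integrals in \eqref{g0b}; the limit is legitimate by dominated convergence, via the $\te$-uniform bound $\abs{\f{\e^{st}}{s w(s)+\la_n}}\le\f{C\,\e^{-crt}}{\min(r^{\al_0-\de},r^{\al_0})}$ coming from Lemma \ref{lem-sw>s}. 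Adding the two rays and simplifying $\f{1}{P+\i Q+\la_n}-\f{1}{P-\i Q+\la_n}=\f{-2\i Q}{(P+\la_n)^2+Q^2}$ then yields \eqref{g0a}--\eqref{g0b}.

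For \eqref{g0c}, I start from $\int_{\ga(\ve,\te)}\f{\ds}{s(s w(s)+\la_n)}=0$, valid for every admissible $\ve$ and $\te$. Keeping $\te\in\left(\f{\pi}{2},\pi\right)$ fixed and letting $\ve\to 0^+$: writing $A_\te(r)\pm\i B_\te(r)$ for the value of $s w(s)$ at $s=r\e^{\pm\i\te}$, where $A_\te(r):=\int_0^1 r^\al\cos(\al\te)\mu(\al)\da$ and $B_\te(r):=\int_0^1 r^\al\sin(\al\te)\mu(\al)\da\ge 0$, the sum of the integrals over $\ga_-(\ve,\te)$ and $\ga_+(\ve,\te)$ converges to $-2\i\int_0^{+\infty}\f{\Psi_{n,\te}(r)}{r}\dr$ with $\Psi_{n,\te}(r):=\f{B_\te(r)}{(A_\te(r)+\la_n)^2+B_\te(r)^2}$; the apparent non-integrable singularity $\sim\f{1}{r\la_n}$ at $r=0$ cancels between the two rays, the surviving integrand being integrable at $r=0$ because $B_\te(r)\lesssim(\log r)^{-2}$ there (so $\f{B_\te(r)}{r}$ is integrable near $0$), by $\sin(\al\te)\le\al\te$, and integrable at $r=+\infty$ because $B_\te(r)\gtrsim r^{\al_0-\de}$ by \eqref{cnd-mu1}. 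Meanwhile $\int_{\ga_c(\ve,\te)}\f{\ds}{s(s w(s)+\la_n)}\to\f{2\i\te}{\la_n}$, by comparison with $\int_{\ga_c(\ve,\te)}\f{\ds}{s\la_n}=\f{2\i\te}{\la_n}$ and the bound $C\,\zeta(\ve)\la_n^{-2}\to 0$ on the difference (second inequality in \eqref{y1}, Lemma \ref{lm-te}). Hence $\int_0^{+\infty}\f{\Psi_{n,\te}(r)}{r}\dr=\f{\te}{\la_n}$ for all $\te\in\left(\f{\pi}{2},\pi\right)$, and letting $\te\to\pi^-$ (dominated convergence with a $\te$-uniform majorant built from the two asymptotic bounds above, using $\Psi_{n,\te}\to\Phi_n$ pointwise) gives $\int_0^{+\infty}\f{\Phi_n(r)}{r}\dr=\f{\pi}{\la_n}$, i.e. \eqref{g0c} with $C=\pi$.

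The main obstacle is the interchange of limits and integration near $r=0$. The integrals over $\ga_-(\ve,\te)$ and $\ga_+(\ve,\te)$ are only conditionally convergent -- each diverges separately as $\ve\to 0^+$ -- so they must be kept paired and the near-origin cancellation made precise; moreover the resulting integrability at $r=0$ is borderline and relies on $\sin(\pi\al)$, hence $Q$ and $B_\te$, vanishing to first order at $\al=0$, which produces the just-integrable decay $\f{1}{r(\log r)^2}$ rather than the divergent $\f{1}{r\abs{\log r}}$ that a cruder estimate would give. The remaining ingredients -- holomorphy off $\R_-$, vanishing of the small-circle contribution, and the exact value of $\int_{\ga_c}\f{\ds}{s\la_n}$ -- are routine.
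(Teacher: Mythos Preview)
Your derivation of \eqref{g0a}--\eqref{g0b} by collapsing $\ga(\ve,\te)$ onto the cut and reading off the jump is essentially the paper's argument.

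Your proof of \eqref{g0c}, however, takes a genuinely different and sharper route. The paper introduces a threshold $a_n$ defined by $\int_0^1 a_n^\al\mu(\al)\da=\f{\la_n}{2}$, splits $\int_0^\infty\f{\Phi_n(r)}{r}\dr$ at $r=a_n$, bounds the near part by $\f{2\pi}{\la_n}$ via $\sin u\le u$ and the identity $\f{\Phi_n(r)}{r}\le\f{4\pi}{\la_n^2}\f{\d}{\dr}\int_0^1 r^\al\mu(\al)\da$, and handles the tail $[a_n,\infty)$ by a L'Hospital-type estimate that \emph{requires the extra hypothesis \eqref{cnd-mu2}} (i.e.\ $\mu\equiv 0$ on $(\al_1,1)$), not listed among the assumptions of the lemma but available in Theorem~\ref{thm-H2} where the lemma is invoked. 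Your approach bypasses this splitting entirely: starting from the identity $\int_{\ga(\ve,\te)}\f{\ds}{s(sw(s)+\la_n)}=0$ already proved in Proposition~\ref{pr-euws}, you let $\ve\to 0$ (the two ray contributions combine algebraically into $-2\i\int_\ve^\infty\f{\Psi_{n,\te}(r)}{r}\dr$, whose integrability at $0$ your bound $B_\te(r)\lesssim(\log r)^{-2}$ secures, while $\int_{\ga_c}\to\f{2\i\te}{\la_n}$) to obtain the \emph{exact} value $\int_0^\infty\f{\Psi_{n,\te}(r)}{r}\dr=\f{\te}{\la_n}$, and then $\te\to\pi$ by dominated convergence gives $\int_0^\infty\f{\Phi_n(r)}{r}\dr=\f{\pi}{\la_n}$. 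What your approach buys is an explicit constant, no dependence on \eqref{cnd-mu2} (so the proof actually matches the lemma's stated hypotheses), and a shorter argument; what the paper's decomposition buys is a more hands-on pointwise understanding of $\Phi_n$, which may be useful elsewhere. Your justification of the borderline integrability at $r=0$ is the genuinely delicate point, and you have identified it correctly.
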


As a matter of fact we know from \eqref{sol-u-distri} that
$$
\norm{u(t)}_{D(A)}^2
=\norm{\int_0^t A S_1(t-\tau)F(\tau)\d\tau}_\LL^2
=  \sum_{n=1}^\infty \la_n^2 \abs{\int_0^t  G_n(t-\tau) \langle F(\tau),\vp_n \rangle_\LL \d\tau}^2,\ t \in (0,T),
$$
and for each $n \in \N$ we have
\begin{eqnarray*}
\int_0^T \abs{\int_0^t G_n(t-\tau) \langle F(\tau),\vp_n \rangle_\LL \d\tau}^2 \d t & = & \norm{G_n \ast \langle F(\cdot),\vp_n \rangle_\LL}_{L^2(0,T)}^2 \\
&  \leq & \norm{G_n}_{L^1(0,T)}^2 \norm{ \langle F(\cdot),\vp_n \rangle_\LL}_{L^2(0,T)}^2,
\end{eqnarray*}
by Young's inequality, whence
\begin{equation}
\label{f2}
\|u\|_{L^2(0,T;D(A))}^2
\le \sum_{n=1}^\infty \la_n^2 \norm{G_n}_{L^1(0,T)}^2 \norm{\langle F(\cdot),\vp_n \rangle_\LL}_{L^2(0,T)}^2.
\end{equation}
Thus, we are left with the task of estimating $\|  G_n \|_{L^1(0,T)}$. This can be done by combining Lemma \ref{lem-G_n} with Fubini's theorem, in the same way as in the derivation of \cite[Theorem 2.1]{LLY15}. We get for every $n \in \N$ that
$$
\|  G_n \|_{L^1(0,T)}
=\f{1}{\pi} \int_0^{+\infty} \Phi_n(r) \left( \int_0^T \e^{-rt} \dt \right) \dr
\le \f{1}{\pi} \int_0^{+\infty} \f{\Phi_n(r)}{r}\dr
\le \f{C}{\la_n},
$$
the constant $C$ being independent of $n$.
This and \eqref{f2} yield
$$
\norm{u}_{L^2(0,T;D(A)}^2
\le C \sum_{n=1}^{+\infty} \int_0^T \abs{\langle F(t),\vp_n \rangle_\LL}^2 \d t
\le C \norm{F}_{L^2(Q)}^2,
$$
with the aid of the Parseval identity, which establishes the result.



\section{Time analyticity: proof of Theorem \ref{thm-an}}
\label{sec-an}

With reference to Duhamel's representation formula \eqref{sol-u-distri} of the weak solution to \eqref{equ-u-distri}, 
we may treat the two cases $F=0$ and $u_0=0$ separately. We stick with the notations of Section \ref{sec-repsol} and for any open connected set $\cO$ in $\R$ or $\C$, we denote by $\A(\cO,\LL)$ the space of $\LL$-valued analytic functions in $\cO$.\\

\noindent 1) {\it Step 1: $F=0$}.
In light of Proposition \ref{pr-euws}, it is enough to show that the mapping $t \in (0,+\infty) \mapsto S_0(t) u_0$, where $S_0$ is defined by \eqref{def-S0}, is extendable to an analytic map of  $\cO$ into $\LL$, where
$\cO :=\{r \e^{i\omega};\ r \in (0,+\infty),\ \omega \in(-\theta_1,\theta_1) \}$ for some arbitrary $\te_1 \in \left( 0,\te- \f{\pi}{2} \right) \cap (0,\pi-\te)$. 

We start by noticing that $\abs{\pm \te + \om} \in (\te-\te_1,\te+\te_1) \subset \left( \f{\pi}{2} , \pi \right)$ for all $\omega \in (-\te_1,\te_1)$ and hence that $\cos( \pm \theta +\omega) \leq \cos(\theta-\theta_1)$. Thus we have $\abs{\e^{z s}} \le \e^{\abs{z} \abs{s} \cos(\theta-\theta_1)}$ for all $z \in \cO$ and all $s \in \ga_\pm(\ve,\te)$, by \eqref{def-ga-p}, and \eqref{def-Y} then yields $\norm{\e^{z s} s^2 Y(s) u_0}_{L^2(\Omega)} 
\le \frac{2 \norm{\mu}_{L^\infty(0,1)}}{\la_1 \sin \te} \vartheta(\ve) \e^{\abs{z} \abs{s} \cos(\te-\te_1)}$ with the aid of Lemma \ref{lm0} and Statement (b) in Lemma \ref{lm-te}. Thus, taking into account that $\cos(\te-\te_1) \in (-1,0)$, we get for any compact set $K \subset \cO$ that
\begin{equation}
\label{an0}
\norm{\e^{z s} s^2 Y(s) u_0}_{\LL} 
\le \frac{2 \norm{\mu}_{L^\infty(0,1)}}{\la_1 \sin \te} \vartheta(\ve) \e^{d_K \abs{s} \cos(\te-\te_1)} 
,\ z \in K,\ s \in \ga_\pm(\ve,\te),
\end{equation}
where $d_K:= \min \{ \abs{z},\ z \in K \} \in (0,+\infty)$. Next, since $z \mapsto e^{z s} s^2 Y(s) u_0 \in \A(\cO,\LL)$ for all $s \in \ga_\pm(\ve,\te)$, we derive from \eqref{an0} that $z \mapsto \frac{1}{2\pi\i}\int_{\ga_\pm(\ve,\te)} \e^{z s} s^2Y(s) u_0 \d s \in \A(\cO,\LL)$.
Further, since the path $\ga_c(\ve,\te)$ is finitely extended, it is easy to see that $z \mapsto \frac{1}{2 \pi\i} \int_{\ga_c(\ve,\te)} \e^{z s} s^2Y(s) u_0 \d s \in \A(\cO,\LL)$. Therefore, using \eqref{def-ga} we find that
$$z \mapsto \frac{1}{2 \pi\i} \int_{\ga(\ve,\te)} \e^{z s} s^2Y(s) u_0 \d s \in \A(\cO,\LL). $$ 
The desired result follows from this, \eqref{def-S0} and \eqref{def-Y}.\\

\noindent 2) {\it Step 2: $u_0=0$}. Fix $t_0 \in (0,+\infty)$. We shall prove existence of $r \in (0,+\infty)$ such that $t \mapsto \int_0^t S_1(\tau) F(t-\tau) \d \tau$ is extendable to a holomorphic function of $D(t_0,r)$ into $\LL$.
Here, $S_1$ is the operator defined by \eqref{def-S1}, and for all $z_0 \in \C$ and all $R \in (0,+\infty)$, we denote by $D(z_0,R):=\{ z  \in \C;\ \abs{z-z_0} \in [0,R) \}$ the open disk of $\C$ centered at $z_0$ with radius $R$.

First, since $F$ is extendable to a holomorphic function of $S_\rho$ into $\LL$, we may assume without loss of generality that $F \in \A(S_\rho,\LL)$. Next we pick $\de_1 \in \left( 0 , \f{\rho}{4} \right) \cap (0, t_0)$ 
in such a way that $z-\tau \in D(0,\rho) \subset S_\rho$ for all $(z,\tau) \in D(t_0,2\de_1) \times D(t_0,2 \de_1)$ and obtain that the mapping
\begin{equation}
\label{an1}
\tau \mapsto s \Phi(s) \e^{\tau s} F(z-\tau,\cdot) \in \A(D(t_0,2 \de_1),\LL),\ z \in D(t_0,2\de_1), 
\end{equation}
where $\Phi$ the same as in \eqref{def-Phi}.

Further, for $t_1 \in (t_0-\de_1, t_0)$ fixed, since $[t_1,z] \subset D(t_0, 2 \de_1)$ for all $z \in D(t_0,2\de_1)$, we see from \eqref{an1} that
\begin{equation}
\label{an2}
U_*(s,z):=\int_{[t_1,z]}s\Phi(s) \e^{\tau s} F(z-\tau,\cdot) \d\tau,\ s \in \C\setminus\R_-,\ z \in D(t_0,2\delta_1)
\end{equation}
is well defined. Moreover, for all $z \in D(t_0,\de_1)$ and all $h \in D(0,\de_1) \setminus \{ 0 \}$, we have
\begin{eqnarray*}
& & \f{U_*(s,z+h)-U_*(s,z)}{h} \\
& = & \f{1}{h} \left(\int_{[t_1,z+h]} s\Phi(s) \e^{\tau s}F(z+h-\tau,\cdot)d\tau-\int_{[t_1,z]}s\Phi(s) \e^{\tau s}F(z+h-\tau,\cdot) \d\tau\right) \\
& &+\int_{[t_1,z]} s\Phi(s) \e^{\tau s}\f{F(z+h-\tau,\cdot)-F(z-\tau,\cdot)}{h} \d\tau \\
& = & \int_{[z,z+h]}s\Phi(s) \e^{\tau s}F(z+h-\tau,\cdot) \d\tau + \int_{[t_1,z]} s \Phi(s) \e^{\tau s} \f{F(z+h-\tau,\cdot)-F(z-\tau,\cdot)}{h} \d\tau,
\end{eqnarray*} 
and hence
$$\lim_{h \to 0} \frac{U_*(s,z+h)-U_*(s,z)}{h}=s\Phi(s) \e^{z s}F(0,\cdot)+\int_{[t_1,z]}s\Phi(s) \e^{\tau s} \partial_z F(z-\tau,\cdot)d\tau$$
in $\LL$. As a consequence $z \mapsto U_*(s,z)\in \A(D(t_0,\de_1),\LL)$ for all $s \in \C \setminus \R_-.$

Furthermore, using that $[t_1,z] \subset D(t_0,\de_1) \subset \cO$ for each $z \in D(t_0,\de_1)$, we derive from \eqref{an2} that
$$
 \norm{U_*(s,z)}_{\LL} \le 2\de_1\abs{s} \norm{\Phi(s)}_{\cB(\LL)} \norm{F}_{L^\infty(D(0,2\de_1),\LL)}
 \e^{d(t_0,\de_1) \abs{s} \cos(\te\pm\te_1)},\ s \in \ga_\pm(\ve,\te),
$$
with $d(t_0,\de_1):=\inf \{ \abs{z};\ z \in D(t_0,\de_1) \} \in (0,+\infty)$. Thus, bearing in mind that $\te\pm \te_1 \in \left( \f{\pi}{2},\pi \right)$, we infer from this and from \eqref{ess2} that
$$ \norm{U_*(s,z)}_{\LL} \le C \abs{s}^{-1-\al_0+\de},\  z\in D(t_0,\de_1),\ s \in \ga_\pm(\ve,\te), $$
for some positive constant $C$ which is independent of $s$ and $z$. Therefore $z \mapsto \int_{\ga_\pm(\ve,\te)} U_*(s,z) \d s \in \A(D(t_0,\de_1),\LL)$ and hence 
\begin{equation}
\label{an2b}
z \mapsto \int_{\ga(\ve,\te)} U_*(s,z) \d s \in \A(D(t_0,\de_1),\LL).
\end{equation}
 
Similarly, by setting
\begin{equation}
\label{an3}
U_\sharp(s,z) :=\int_{[0,t_1]} s\Phi(s) \e^{\tau s} F(z-\tau) \d\tau,\  s \in \C\setminus\R_-,\ z \in D(t_0,\de_1), 
\end{equation}
and arguing as above, we find that 
\begin{equation}
\label{an3b}
z \mapsto \int_{\ga(\ve,\te)} U_\sharp(s,z) \d s \in \A(D(t_0,\de_1),\LL).
\end{equation}

Finally, putting \eqref{sol-u-distri}, \eqref{def-S1}, \eqref{def-Phi}, \eqref{an2} and \eqref{an3} together, we obtain that the solution $u$ to \eqref{equ-u-distri} reads
$$
u(t,\cdot) = \int_0^t S_1(s) F(t-s)ds = \f{1}{2 \pi \i} \left( \int_{\ga(\ve,\te)} U_*(s,t) \d s+ \int_{\ga(\ve,\te)} U_\sharp(s,t) \d s \right),\ t\in(t_0-\de_1,t_0+\de_1),
$$
and consequently $u\in \A((t_0-\de_1,t_0+\de_1),\LL)$ by \eqref{an2b} and \eqref{an3b}. Since $t_0$ is arbitrary in $(0,+\infty)$, this entails that $u\in \A((0,+\infty),\LL)$, completing the proof of Theorem \ref{thm-an}.

\section{Lipschitz stability: proof of Theorem \ref{thm-Lip-coef}}
\label{sec-lip_to_orders}

The strategy of the proof of the stability inequality \eqref{eq-Lip-coef} essentially follows the lines of the derivation of \cite[Theorem 2.3]{LLY-AMC} and boils down to the estimates \eqref{esti-u}-\eqref{esti-u_t} established in Theorem \ref{thm-forward}. 

For the sake of notational simplicity, we start by rewriting the IBVP \eqref{equ-u-distri} with $F=0$ as
\begin{equation}
\label{eq-gov-u1}
\begin{cases}
\!\begin{alignedat}{2}
&\D_t^{(\mu)}u=L_{a,q} u 
&\quad &\mbox{in}\ Q,
\\
&u(0,\cdot)=u_0 & \quad & \mbox{in}\ \Om,\\
&u=0 & \quad & \mbox{on}\ \Sg,
\end{alignedat}
\end{cases}
\end{equation}
where $L_{a,q} u(x,t):=\mathrm{div}(a(x)\na u(x,t)) + q(x) u(x,t)$ is associated with
the diffusion matrix $a$ and the electric potential $q$.
Next, we notice that $v:=u-\wt u$ is solution to
$$
\begin{cases}
\!\begin{alignedat}{2}
& \D_t^{(\mu)} v=L_{a,q} v + F & \quad & \mbox{in}\ Q,
\\
& v(0,\cdot)=0 & \quad & \mbox{in}\ \Om,
\\
& v=0 & \quad & \mbox{on}\ \Sg,
\end{alignedat}
\end{cases}
$$
with
\begin{equation}
\label{z0}
F:=\D_t^{(\wt\mu-\mu)} \wt u + L_{a -\wt a,q - \wt q} \wt u.
\end{equation} 
Thus, in light of \eqref{esti_u-h2-b}, we are left with the task of estimating the $L^1(0,T;\LL)$-norm of $F$. This will be done with the help of \eqref{esti-u}-\eqref{esti-u_t}, giving
\begin{equation}
\label{eq-est-u1}
\|\wt u\|_{L^1(0,T;\HH)}\le C \e^{CT} T^\ga\ \mbox{and}\
\|\pa_t \wt u\|_{L^1(0,T;\LL)}\le C e^{CT} T^{\al_0\ga}.
\end{equation}
Here and in the remaining part of this proof, $C$ denotes a generic positive constant depending only on $M$, $c_a$, $\ga$, $\al_0$ and $\de$, which may change from line to line. Further, using the fact
that $(a,q)$ and $(\wt a,\wt q)$ are in $C^1(\ov\Om) \times C^0(\ov\Om)$, we obtain that 
\begin{eqnarray}
\|L_{\wt a-a, \wt q - q} \wt u\|_{L^1(0,T;\LL)}
& \le &  \left( \norm{a-\wt a}_{C^1(\ov\Om)} + \norm{q-\wt q}_{C^0(\ov\Om)} \right) \|\wt u\|_{L^1(0,T;\HH)} \nonumber \\
& \le &  C \e^{CT} \left( \norm{a-\wt a}_{C^1(\ov\Om)} + \norm{q-\wt q}_{C^0(\ov\Om)} \right), \label{z1}
\end{eqnarray}
with the aid of \eqref{eq-est-u1}.
On the other hand, for all $t \in (0,T)$ we have
\begin{eqnarray*}
\norm{\D_t^{(\wt\mu - \mu)}\wt u(t)}_{\LL} & \le &  \norm{\wt\mu-\mu}_{L^\infty(0,1)} \int_0^1 \norm{\pa_t^\al \wt u(t)}_{\LL} \da \\
& \le & \norm{\wt\mu-\mu}_{L^\infty(0,1)}  \int_0^1 
\f{1}{\Ga(1-\al)} \left( \int_0^t \f{\norm{\pa_\tau\wt u(\tau)}_{\LL}}{(t-\tau)^\al}\d\tau \right) \da,
\end{eqnarray*}
from the very definition of $\D_t^{(\mu)}$, with 
$\norm{\int_0^t \f{\norm{\pa_\tau \wt u(\tau)}_{\LL}}{(t-\tau)^\al} \d\tau}_{L^1(0,T)} \le \norm{\pa_t \wt u}_{L^1(0,T;\LL)} \int_0^T {t^{-\al}} \d t$, hence
\begin{eqnarray}
\norm{\D_t^{(\wt\mu - \mu)}\wt u}_{L^1(0,T;\LL)}
& \le & \norm{\wt\mu-\mu}_{L^\infty(0,1)} \norm{\pa_t \wt u}_{L^1(0,T;\LL)} 
\int_0^1 \f{T^{1-\al}}{(1-\al)\Ga(1-\al)} \da \nonumber \\
& \le & C \e^{C T} \norm{\wt\mu-\mu}_{L^\infty(0,1)}  T^{\al_0\ga}
\int_0^1 \f{T^{1-\al}}{\Ga(2-\al)}\da, \label{z2}
\end{eqnarray}
by \eqref{eq-est-u1} and the identity
$\Ga(2-\al) = (1-\al)\Ga(1-\al)$.
Since the mapping $\alpha \mapsto \Ga(\alpha)$ is lower bounded by a positive constant, uniformly in the interval $[1,2]$, we deduce from \eqref{z2} that
$$
\norm{\D_t^{(\wt\mu - \mu)}\wt u}_{L^1(0,T;\LL)}
\le C \e^{C T} \f{T^{1+\al_0\ga}}{|\log T|} \|\wt\mu-\mu\|_{L^\infty(0,1)}.
$$
Putting this together with \eqref{z0} and \eqref{z1}, we obtain that
$$
\|F\|_{L^1(0,T;\LL)} 
\le C \e^{CT} \left( \f{T^{1+ \al_0\ga}}{|\log T|} \|\mu-\wt\mu\|_{L^\infty(0,1)}
+T^\ga \left( \norm{a-\wt a}_{C^1(\ov\Om)} + \norm{q-\wt q}_{C^0(\ov\Om)} \right) \right).
$$
With reference to \eqref{esti_u-h2}, this entails for all $\ka\in[0,1)$ and all $p \in \left[1,\f{1}{1-\al_0(1-\ka)}\right)$, that
$$
\|u-\wt u\|_{L^p(0,T;H^{2 \ka}(\Om))}
\le C \e^{CT} \left( \f{T^{1+\al_0\ga}}{|\log T|}\|\mu-\wt\mu\|_{L^\infty(0,1)}
+T^\ga \left( \norm{a-\wt a}_{C^1(\ov\Om)} + \norm{q-\wt q}_{C^0(\ov\Om)} \right) \right),
$$
which yields the desired result.

\section{Appendix}
\label{sec-app}

\subsection{Proof of Lemma \ref{lem-G_n}}
We stick with the notations used in the proof of Lemma \ref{lem-strichartz-nonhomo} and recall from \eqref{c1} that
$$ 
G_{n,+}(t) + G_{n,-}(t) 
= -\f{1}{\pi} 
\int_\ve^{+\infty} \Im \left( 
\f{\e^{\i\te+rt\e^{\i\te}}}{r\e^{\i\te}w(r\e^{\i\te})+\la_n} \right) \dr,\ \ve \in (0,1).
$$
Since the integrand in the above integral reads
$$
\Im \left( \e^{\i\te + rt \e^{\i\te}} \right)
\Re\left(\f{1}{r\e^{\i\te}w(r\e^{\i\te})+\la_n} \right)
+\Re \left( \e^{\i\te + rt\e^{\i\te}} \right)
\Im\left(\f{1}{r\e^{\i\te}w(r\e^{\i\te})+\la_n} \right)
$$
and that
$
\lim_{\te \to \pi} \Im \left( \e^{\i\te+rt\e^{\i\te}}\right)
= \lim_{\te \to \pi} \e^{rt\cos\te} \sin(\te + rt\sin\te ) = 0,
$
we derive from \eqref{c1} that
\begin{equation}
\label{g1}
G_n(t)=
\f{1}{2 \pi \i} \lim_{\te \to \pi} \int_{\ga_c(\ve,\te)} \f{1}{sw(s) + \la_n} \e^{st}\ds
+\f{1}{\pi}\int_\ve^{+\infty} \Phi_n(r) \e^{-rt} \dr,
\end{equation}
where $\Phi_n(r) :=\Im\left(\f{1}{r\e^{\i\pi}w(r\e^{\i\pi})+\la_n}\right)$ is expressed by \eqref{g0b}.
Here, we used the fact that $G_n(t)=\f{1}{2 \pi \i}\lim_{\te \to \pi} \int_{\ga(\ve,\te)} \f{1}{s w(s) + \la_n} \e^{s t} \d s$, as the integral in the right hand side of \eqref{c0} is independent of $\te \in ( \f{\pi}{2},\pi)$. 

Further, since $\int_{\ga_c(\ve,\te)} \f{\e^{st}}{sw(s) + \la_n} \ds= \int_{-\te}^{\te} f_n(\ve,\be) \d \be$ with $f_n(\ve,\be):=\f{i \ve \e^{i \be} \e^{\ve t \e^{i \be}}}{\int_0^1 \ve^\al \e^{i \al \be} \mu(\al) \d \al + \la_n}$, and $\abs{f_n(\ve,\be)} \le \f{2 \ve \e^{\ve t}}{\la_n}$ for all $\be \in [-\pi,\pi]$ provided $\ve \in \left( 0, \min \left(1, (\eta \la_1)^{\f{1}{\al}} \right) \right)$, then we get
$$
\lim_{\ve \to 0} \left( \lim_{\te \to \pi}  \int_{\ga_c(\ve,\te)} \f{\e^{st}}{sw(s) + \la_n} \ds \right)=0,
$$
by applying the dominated convergence theorem.
Thus, bearing in mind that the right hand side of \eqref{g1} is independent of $\ve \in ( 0, +\infty)$ since this is the case for the one of \eqref{c0}, we get \eqref{g0a}-\eqref{g0b} by sending $\ve$ to $0$ in \eqref{g1}.
 
We turn now to proving \eqref{g0c}. 
To this purpose, for all $n \in \N$, we introduce $a_n \in (0,+\infty)$ such that
\begin{equation}
\label{g2} 
\int_0^1 a_n^\al \mu(\al) \da = \f{\la_n}{2}.
\end{equation}
Notice that the positive real number $a_n$ is well defined for every $n \in \N$, as the mapping $h : r \mapsto \int_0^1 r^\al \mu(\al) \da$ is one-to-one from $[0,+\infty)$ onto itself. Moreover, we point out for further use that 
\begin{equation}
\label{g2b}
\lim_{n \to +\infty} a_n = +\infty.
\end{equation}
This can be understood from the facts that $a_n =h^{-1} \left( \f{\la_n}{2} \right)$, where $h^{-1}$ denotes the function inverse to $h$, that the mapping $r \mapsto h(r)$ is increasing on $[0,+\infty)$, and that $\lim_{r \to +\infty} h(r)=\lim_{n \to +\infty} \la_n=+\infty$. 
Next, using \eqref{g2}, we get for all $r \in [0,a_n]$ that
$\abs{\int_0^1 r^\al \cos ( \pi \al ) \mu(\al) \da}
\le \abs{\int_0^1 r^\al \mu(\al) \da}
\le \int_0^1 a_n^\al \mu(\al) \da
\le \f{\la_n}2$,
whence
$\int_0^1 r^\al \cos ( \pi \al ) \mu(\al) \da + \la_n \geq \f{\la_n}2$.
Putting this together with \eqref{g0b} and the estimate $\sin u \le u$, which holds true for all $u \in [0, \pi]$, we find for every $r \in (0,a_n]$ that
$$
\f{\Phi_n(r)}{r} 
\le \f{4}{\la_n^2} \int_0^1 r^{\al-1} \sin ( \pi \al ) \mu(\al) \da 
\le \f{4 \pi }{\la_n^2} \int_0^1 \al r^{\al-1} \mu(\al) \da 
\le 
\f{4\pi}{\la_n^2} \f{\d}{\d r} \left( \int_0^1 r^{\al} \mu(\al) \da \right). 
$$
From this and \eqref{g2}, it then follows that
\begin{equation}
\label{g3}
\int_0^{a_n} \f{\Phi_n(r)}{r} \d r 
\le \f{4 \pi}{\la_n^2} \int_0^1 a_n^\al \mu(\al) \da 
\le \f{2\pi}{\la_n}. 
\end{equation}

The next step of the proof boils down to the fact that there exist two positive constants $C_0$ and $R_0$, both of them depending only on $\mu$, such that we have
\begin{equation}
\label{esti-lhospital}
\int_R^{+\infty} \f{1} {\int_0^1 r^{\al+1} \sin(\pi\al)\mu(\al)\da}\dr
\le \f{C_0}{\int_0^1 R^{\al} \sin(\pi\al)\mu(\al)\da},\ R \in [R_0,+\infty).
\end{equation}
The proof of \eqref{esti-lhospital}, which is quite similar to the derivation of L'Hospital's rule, is presented in Section \ref{sec-app2} for the convenience of the reader. Prior to applying \eqref{esti-lhospital} and upon possibly enlarging $R_0$, we notice from \eqref{cnd-mu1} that we have
\begin{equation}
\label{esti-mu}
\int_0^{\al_0-\de} r^\al\mu(\al)\da 
\le \int_{\al_0-\de}^1 r^\al\mu(\al)\da,\ r \in [R_0,+\infty).
\end{equation}
This follows directly from the two basic estimates
$\int_0^{\al_0-\de} r^\al\mu(\al)\da \le \norm{\mu}_{L^\infty(0,1)} (\al_0-\de) r^{\al_0-\de}$
and
$\int_{\al_0-\de}^1 r^\al\mu(\al)\da \ge \int_{\al_0-\f{\de}{2}}^{\al_0-\de} r^\al\mu(\al)\da \ge \f{\de \mu(\al_0)}{4} r^{\al_0-\f{\de}{2}}$, which are valid for all $r \in (0,+\infty)$.

Now, in light of \eqref{g2b}, we pick  $N \in \N$ so large that $a_N \ge R_0$ and we apply \eqref{esti-lhospital} with $R=a_n$ for all $n \in \N_N := \{ n \in \N,\ n \ge N \}$. With reference to \eqref{g0b}, we obtain that
$$
\int_{a_n}^{+\infty} \f{\Phi_n(r)}{r} \dr
\le \int_{a_n}^{+\infty} \f{1} {\int_0^1 r^{\al+1} \sin(\pi\al)\mu(\al)\da}\dr
\le \f{C_0}{\int_0^1 a_n^{\al}\sin(\pi\al) \mu(\al)\da},\ n \in \N_N.
$$
In view of \eqref{cnd-mu2}, this leads to
\begin{equation}
\label{g5}
\int_{a_n}^{+\infty} \f{\Phi_n(r)}{r} \dr
\le \f{C_0}{\int_{\al_0-\de}^{\al_1} a_n^{\al}\sin(\pi\al) \mu(\al)\da}
\le \f{C}{\int_{\al_0-\de}^{\al_1} a_n^{\al} \mu(\al)\da}
\le \f{C}{\int_{\al_0-\de}^1 a_n^{\al} \mu(\al)\da},\ n \in \N_N,
\end{equation}
by setting $C:= \f{C_0}{\min \left( \sin( \pi (\al_0-\de) ) , \sin ( \pi \al_1 ) \right)} \in (0,+\infty)$. Next, applying \eqref{esti-mu} with $r=a_n$ and $n \in \N_N$, which is permitted since $a_n \ge a_N \ge R_0$ for all $n \in \N_N$, we get that $\int_{\al_0-\de}^1 a_n^{\al} \mu(\al)\da \geq \f{\int_0^1 a_n^{\al} \mu(\al)\da}{2}$. Therefore, we have
$\int_{a_n}^{+\infty} \f{\Phi_n(r)}{r} \dr \le \f{C}{\la_n}$ for all $n \in \N_N$, from \eqref{g2} and \eqref{g5}. This entails \eqref{g0c} since 
$N$ is finite.

\subsection{Proof of the estimate \eqref{esti-lhospital}}
\label{sec-app2}
For $R \in (0,+\infty)$, put
$f(R):=\int_R^{+\infty}\f{1} {\int_0^1 r^{\al+1} \sin(\pi\al)\mu(\al)\da} \dr$ and
$g(R):=\f{1}{\int_0^1 R^{\al} \sin(\pi\al)\mu(\al)\da}$.
Evidently, $f$ and $g$ are two positive functions in $(0,+\infty)$ that vanish at infinity: 
\begin{equation}
\label{eq-lim}
\lim_{R\to+\infty}f(R)=\lim_{R\to+\infty}f(R)=0,
\end{equation}
and for all $R \in (0,+\infty)$ and all $R_1 \in(R,+\infty)$, we find by applying Rolle's theorem to the function $r \to (f(r)-f(R_1))(g(R)-g(R_1))-(f(R)-f(R_1))(g(r)-g(R_1))$ on the interval $[R,R_1]$, that there exists $\xi\in (R,R_1)$ such that we have
\begin{equation}
\label{lhr}
\f{f(R) - f(R_1)}{g(R) - g(R_1)} 
=\f{f'(\xi)}{g'(\xi)}.
\end{equation}
Next, since $\int_0^{\al_0-\de} r^\al \sin(\pi\al) \mu(\al) \d \al \le (\al_0-\de) \| \mu \|_{L^\infty(0,1)} r^{\al-0-\de}$ and
$$\int_{\al_0-\de}^1 r^\al \sin(\pi\al) \mu(\al) \d \al \ge 
\int_{\al_0-\f{\de}{2}}^{\al_0} r^\al \sin(\pi\al) \mu(\al) \d \al \ge c r^{\al_0-\f{\de}{2}},$$ 
by \eqref{cnd-mu1}, where $c:= \min \left( \sin \left( \left(\al_0-\f{\de}{2} \right) \pi \right), \sin(\al_0 \pi) \right)  \f{\de \mu(\al_0)}{4} \in (0,+\infty)$, then there is necessarily $R_0 \in [1,+\infty)$, such that
$$
\int_0^{\al_0-\de} r^\al \sin(\pi \al) \mu(\al) \da 
\le \int_{\al_0-\de}^1 r^\al \sin(\pi \al) \mu(\al) \da,\ r \in [R_0,+\infty).
$$
From this and the identity
$\f{f'(r)}{g'(r)} = \f{\int_0^1 r^\al \sin(\pi\al)\mu(\al)\da}{\int_0^1 \al r^\al \sin(\pi\al)\mu(\al)\da}$, obtained for all $r \in (0,+\infty)$ by direct calculation, it then follows that
$$
\f{f'(r)}{g'(r)}\le\f{2\int_{\al_0-\de}^1 r^\al \sin(\pi\al)\mu(\al)\da}{(\al_0-\de)\int_{\al_0-\de}^1 r^\al \sin(\pi\al)\mu(\al)\da}\le \f{2}{\al_0-\de},\ r \in [R_0,+\infty).
$$
Putting this with \eqref{lhr} we get
$$
\f{f(R) - f(R_1)}{g(R) - g(R_1)} 
=\f{f'(\xi)}{g'(\xi)}\le \f{2}{\al_0-\de},\ R \in [R_0,+\infty),
$$
uniformly in $R_1 \in (R,+\infty)$. Thus, taking $R_1\to +\infty$ in the above estimate, we obtain
$$
\lim_{R_1\to+\infty} \f{f(R) - f(R_1)}{g(R) - g(R_1)} 
=\f{f(R)}{g(R)} \le \f{2}{\al_0-\de},\ R \in [R_0,+\infty),
$$
in virtue of \eqref{eq-lim}, which is the statement of \eqref{esti-lhospital}. 

\section*{Acknowledgment}
This work is partially supported by Grant-in-Aid for Scientific Research (S) 15H05740, JSPS.
The first author thanks Grant-in-Aid for Research Activity Start-up 16H06712, JSPS.
The two last authors would like to thank the Department of Mathematical Sciences of The University of Tokyo, where part of this article was written, for its kind hospitality.

\end{document}